\documentclass[a4paper,12pt,reqno]{amsart}
\usepackage[usenames,dvipsnames]{color}

%
%

\pagestyle{plain}


%
%
\usepackage{mathtools}
\usepackage{mymacros,fullpage,pstricks}
\usepackage[all]{xy}
\usepackage[
bookmarks=true,bookmarksnumbered=true,
 setpagesize=false,%
 colorlinks=true,%
 pdftitle={},%
 pdfsubject={},%
 pdfauthor={},%
 pdfkeywords={TeX; hyperref; color;},%
 colorlinks=false]{hyperref}

%
%

\newcommand*{\myov}[1]{\overbracket[0.65pt][-1pt]{#1}}

\newcommand{\Out}{\operatorname{Out}}
\newcommand{\Bir}{\operatorname{Bir}}

\renewcommand{\Dbar}{\overline{D}}
\renewcommand{\Dhat}{\widehat{D}}
\newcommand{\mult}{\operatorname{mult}}

\newcommand{\bdiscrep}{\operatorname{b-discrep}}

\newcommand{\Br}{\operatorname{Br}}

\newcommand{\bk}{\bsk}
\newcommand{\Sch}{\frakS\mathrm{ch}}
\newcommand{\Div}{\operatorname{Div}}
\newcommand{\bDiv}{\operatorname{\mathbf{Div}}}
\newcommand{\Ab}{\operatorname{\mathbf{Ab}}}
\newcommand{\bdiv}{\operatorname{\mathbf{div}}}
\newcommand{\divi}{\operatorname{div}}
\newcommand{\Exc}{\operatorname{Exc}}
\newcommand{\Bl}{\operatorname{\mathbf{Bl}}}
\newcommand{\Gm}{\mathbb{G}_m}
\newcommand{\R}{\mathbb{R}}
\newcommand{\Q}{\mathbb{Q}}
\newcommand{\Z}{\mathbb{Z}}

\newcommand{\trdeg}{\operatorname{trdeg}}
\newcommand{\Gal}{\operatorname{Gal}}
\newcommand{\ram}{\operatorname{ram}}
\def\ra{\rangle}
\def\la{\langle}

%
%


%
%
\usepackage{soul,xspace}
\newcommand{\sch}[2]%
{\st{#1}{\color{DarkOrchid} \sf {#2}}}
\renewcommand{\sch}[2]{#2}
\newcommand\logpair{log variety\xspace}
\newcommand\logpairs{log varieties\xspace}
\newcommand\alogsmooth{an snc\xspace}
\newcommand\logsmooth{snc\xspace}
\newcommand\Logsmooth{Snc\xspace}
\newcommand\logres{log resolution\xspace}
\hypersetup{
    colorlinks=true,
    linkcolor={blue!50!black},
    citecolor={green!50!black},
    urlcolor={red!80!black}
}
\numberwithin{equation}{theorem}

%
%

\begin{document}

\title{The minimal model \sch{theory}{program} for
  \sch{}{b}-log canonical divisors and applications}
\author{Daniel Chan}
\address{Daniel Chan, Department of Pure Mathematics,
School of Mathematics and Statistics
UNSW Australia}
\email{danielc@unsw.edu.au}

\author{Kenneth Chan}
\address{Kenneth Chan, Department of Mathematics, University of Washington, Seattle, WA, USA}
\email{kenhchan@math.washington.edu}

\author{Louis de Thanhoffer de V\"{o}lcsey}
\address{Louis de Thanhoffer de V\"{o}lcsey, Department of Mathematics,
  University of Toronto, Toronto, ON, Canada}
\email{louis.dethanhofferdevolcsey@utoronto.ca}

\author{Colin Ingalls}
\address{Colin Ingalls, Department of Mathematics and Statistics,
University of New Brunswick,
Fredericton, NB, Canada}
\email{cingalls@unb.ca}%

\author{Kelly Jabbusch}
\address{Kelly Jabbusch, Department of Mathematics and Statistics, University of Michigan - Dearborn, Dearborn, MI, USA}
\email{jabbusch@umich.edu}

\author{S\'andor J Kov\'acs}
\address{S\'andor J Kov\'acs, Department of Mathematics, University of Washington, Seattle, WA, USA}
\email{skovacs@uw.edu}

\author{Rajesh Kulkarni}\address{Rajesh Kulkarni, Department of Mathematics,
Michigan State University,
East Lansing, MI, USA}
\email{kulkarni@math.msu.edu}

\author{Boris Lerner}\address{Boris Lerner, Department of Pure Mathematics,
School of Mathematics and Statistics
UNSW Australia}
\email{blerner@gmail.com}

\author{Basil Nanayakkara}\address{Basil Nanayakkara, Brock University, St.~Catharines, ON, Canada}
\email{bnanayakkara@brocku.ca}

\author{Shinnosuke Okawa}
\address{Shinnosuke Okawa, Department of Mathematics, Graduate School of Science, Osaka University, Osaka, Japan}
\email{okawa@math.sci.osaka-u.ac.jp}

\author{Michel Van den Bergh}\address{Michel Van den Bergh, Universiteit Hasselt,
Diepenbeek, Belgium}
  \email{michel.vandenbergh@uhasselt.be}

\begin{abstract}
  We discuss the minimal model \sch{theory}{program} for \emph{b-\logpairs}, which is
  a pair of a variety and a b-divisor, as a natural generalization of the minimal
  model \sch{theory}{program} for \sch{usual}{ordinary} \logpairs\sch{ where we have
    a boundary divisor on every model}.  \sch{We derive standard theorems from the
    usual log minimal model theory.  In particular, if log MMP terminates, then so
    does the b-log MMP.}{We show that the main theorems of the log MMP work in the
    setting of the b-log MMP.  If we assume that the log MMP terminates, then so does
    the b-log MMP.} \sch{Also,}{Furthermore, the} b-log MMP includes both \sch{}{the}
  log MMP and \sch{}{the} equivariant MMP as special cases.  There are various
  interesting b-\logpairs arising from different objects, including the \emph{Brauer
    pairs}, or ``non-commutative algebraic varieties which are finite over their
  centres''.  The case of toric Brauer pairs \sch{will be}{is} discussed in further
  detail.
\end{abstract}
\thanks{\emph{Mathematics Subject Classification (2010):} Primary 14E15, Secondary 16H10.}
\maketitle
\tableofcontents

\section{Introduction} Let $\bk$ be an algebraically closed field of characteristic
zero.  Let $K$ be a field, finitely generated over $\bk$.  A b-divisor $\bfD$
associates a $\bQ$-divisor $\bfD_X$ to every normal model $X$ of $K,$ compatibly with
pushforward.  We assume throughout that the coefficients of our b-divisors are
rational numbers in the interval $[0,1)$.  The main result of this paper is that
replacing the canonical divisor $K_X$ with $K_X+\bfD_X$ everywhere, provides a
generalization of the minimal model program, namely the b-log MMP.  The b-log MMP
includes the G-equivariant MMP and the log MMP as special cases, by using appropriate
b-divisors, as explained in Examples~\ref{eg:equivariant_MMP_as_b-MMP}
and~\ref{eg:log_MMP_as_b-MMP}.

We show that the main theorems of the log MMP work in the setting of the b-log MMP.
If we assume that the log MMP terminates, then so does the b-log MMP.  The
contractions and flips of the b-log MMP are simply log MMP contractions and flips for the \logpair
$(X,\bfD_X)$ and so many of the results for the b-log MMP are direct consequences of
those for the log MMP.

The b-log MMP differs from the log MMP in terms of what types of singularities are
permitted.  By using the b-divisor in the definition of discrepancy we obtain the
following formula for a birational proper morphism $f:Y \to X$
\[K_Y + \bfD_{ Y } = f ^{ * } \lb K_X + \bfD_X \rb + \sum_E b' ( E; X, \bfD ) E\]
where the sum is over $f$\sch{}{-}exceptional divisors.  Thus we obtain a modification of the usual discrepancy.  Let $d_E$ be the coefficient
of $E$ in the b-divisor $\bfD$ and let $r _{ E } = 1/(1 - d _{ E }).$ We also
introduce another modification of the discrepancy $b(E;X,\bfD) = r_E b'(E;X,\bfD)$
which is more natural from several points \sch{}{of view} as seen in
Corollary~\ref{cr:b-lt_b-lc_vs_a-lt_a-lc}, Corollary~\ref{cr:b-dlt_and_resolution},
Remark~\ref{rmk:bvsbprime} and Example~\ref{eg:equivariant_MMP_as_b-MMP}. Using this
definition of discrepancy we obtain notions of b-terminal, b-canonical, b-log
terminal and b-log canonical.

\sch{In order to begin the contractions of}{Running} the MMP \sch{}{usually starts
  with} \sch{one would first resolve}{resolving} singularities.  In our case, there is
no appropriate notion of smoothness so we must begin by resolving singularities to a
b-terminal model.  We show that any b-\logpair admits a b-terminal resolution of
singularities in Theorem~\ref{th:existence_of_Brauer_terminal_resolution} and
Corollary~\ref{secondproof}.  \sch{We in fact}{In fact, we} provide two proofs of
this result.  The first one is shorter and relies on~\cite{MR2601039}, and the second
proof is longer but is more constructive and uses toroidal geometry.

Once we have an appropriate \sch{}{partial} resolution, we can \sch{begin}{start
  running} the log MMP\sch{ contractions}{}.  The negativity lemma allows us to
conclude that contractions and flips preserve the type of singularities as shown in
Corollary~\ref{MMP_preserves_bterminal_etc}.  If the log MMP terminates then so does
the b-log MMP.  This establishes the main results of the b-log MMP.  Next, we discuss the
history and motivation of our application of b-log MMP to noncommutative algebraic
geometry.

It was noted by M.~Artin that given a maximal order $\Lambda$ over a variety $X$, a tensor power of the dualizing sheaf $\omega_\Lambda^{\otimes n}$ of $\Lambda$
could be realized as the pull back of a divisor $n(K_X+\Delta)$ on $X$ in codimension
one.  This suggested that one can use a $\Q$-divisor on $X$ for what would naturally
be considered the canonical divisor of $\Lambda$.  This idea was used by Chan and
Kulkarni in \cite{ChanKulkarni} to classify del Pezzo orders.  In
\cite{MR2180454}, Chan and Ingalls applied this idea and the log minimal model
program for surfaces to birationally classify orders over surfaces.  This is also
treated in \cite{ArtinChanLieblichdeJong}.  Since then, there remained the issue of
extending the results to higher dimension.  In \cite{Nanayakkara}, Nanayakkara,
showed that Brauer pairs $(X,\alpha)$ with $\alpha \in \Br X$ of order 2,
have b-terminal resolutions in all dimensions, allowing one to \sch{begin}{start} the
minimal model program for Brauer pairs by applying log MMP contractions and flips for the pair
$(X,\Delta)$.  However it was not clear if the steps of the MMP would preserve the notion of Brauer terminal, or if terminal resolutions existed in other cases.  In 2014, a
meeting was held at the American Institute of Mathematics, in order to solve this
problem.  This paper is a joint work of all the participants at that meeting.

We briefly describe the contents of this paper.  The second section begins by defining b-divisors and giving the examples of interest to us.  In particular, we discuss the proper transform b-divisor which recovers the log MMP and the ramification b-divisor of a Galois cover, which recovers the equivariant MMP.  We also describe other ramification b-divisors such as those arising from ramification of Brauer classes, which was the original motivation for this work.

In Section~\ref{sec:b-discrepancy}, we define b and b'-discrepancy and the associated
singularities, and relate these to the usual discrepancy.  In particular we note that
a b-\logpair $(X,\bfD)$ is b-log terminal or b-log canonical if and only if the
\logpair $(X,\bfD_X)$ is log terminal or log canonical\sch{.  Also,}{ and} a
b-\logpair is $(X,\bfD)$ is b-terminal or b-canonical if and only if it is
b'-terminal or b'-canonical.  We also establish the existence of b-terminal
resolutions of b-\logpairs in
Theorem~\ref{th:existence_of_Brauer_terminal_resolution}.  Next, in
Section~\ref{sec:blogMMP} we establish the main results of the b-log MMP.  Much of
this follows from the usual results of the log MMP.  The minimal models will be b-terminal, b-canonical, b-lt or b-lc if one first resolves to
that type of singularity before \sch{beginning contractions}{running the b-log MMP}.

In Section~\ref{sec:toroidal_blog_pairs}, we begin by discussing b-discrepancies for divisors over \logsmooth pairs.  Then we consider the case of toric b-\logpairs and their b-discrepancy in some detail.  We give a constructive proof of the existence of b-terminal resolutions in the toric case
in Proposition~\ref{prop:exists_bterm}.  We complete this section by using the toric results combined with toroidal geometry to provide another proof of the existence of b-terminal resolutions in Corollary~\ref{secondproof}.

In Section~\ref{toric_brauer_classes}, we return to our original motivation for b-divisors coming from ramification of Brauer classes.  We restrict to the case of toric Brauer classes.  Given a non-degenerate
toric Brauer class $\alpha$ with toric variety $X$,
we show that the b-\logpair $(X,\bfD_\alpha)$ is b-terminal, etc.~ if and only if $X$ is terminal, etc.~in Proposition~\ref{full_rank_toric}.  We
characterize the singularities of the b-\logpair $(\mathbb{A}^3,\bfD_\alpha)$ for a toric Brauer class $\alpha$.

We give an application of the b-log MMP.
\begin{corollary}
  Let $K$ be a field, finitely generated over $\bk$.  Let $\Sigma$ be a central
  simple $K$ algebra with Brauer class $\alpha \in \Br K$ and ramification b-divisor
  $\bfD_\alpha$.  Suppose there is a model $X$ such that  $K_X+\bfD_{\alpha,X}$ is big.  Then the group of outer automorphisms of $\Sigma$ is finite.
\end{corollary}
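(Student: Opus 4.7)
The strategy is to reduce the finiteness of $\Out(\Sigma)$ to the classical finiteness of birational automorphisms of a klt pair of log general type, passed through the canonical model of $(X,\bfD_\alpha)$.

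\emph{Step 1: Embed $\Out(\Sigma)$ into the birational symmetries of $\bfD_\alpha$.} By the Skolem--Noether theorem every $K$-linear automorphism of $\Sigma$ is inner, so restriction to the centre $K$ yields an injection
\[
  \Out(\Sigma) \hookrightarrow \operatorname{Aut}_{\bk}(K) = \Bir(X),
\]
where $\operatorname{Aut}_{\bk}(K)$ is identified with the group of $\bk$-birational self-maps of any projective model $X$ of $K$. A $\bk$-algebra automorphism $\sigma$ of $\Sigma$ carries the Brauer class $\alpha$ to the Brauer class of the $K$-algebra $\Sigma$ with its centre action twisted by $\sigma|_K$, which is again $\alpha$ by construction. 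Since the ramification b-divisor $\bfD_\alpha$ is intrinsically determined by $\alpha$ and is functorial under $\operatorname{Aut}_{\bk}(K)$, the image of $\Out(\Sigma)$ lies in the subgroup
\[
  \Bir(X,\bfD_\alpha) := \{\phi \in \Bir(X) : \phi_{*}\bfD_\alpha = \bfD_\alpha\}.
\]

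\emph{Step 2: Produce a canonical model of $(X,\bfD_\alpha)$.} Replace $X$ by a b-terminal resolution, which exists by Theorem~\ref{th:existence_of_Brauer_terminal_resolution}; this does not affect $\Bir(X,\bfD_\alpha)$, which is intrinsic to $K$ and $\bfD_\alpha$. On the resolution $(X,\bfD_{\alpha,X})$ is klt, since b-terminal is stronger than b-log terminal and the latter coincides with ``$(X,\bfD_X)$ klt'' as recalled in Section~\ref{sec:b-discrepancy}. The bigness of $K + \bfD$ is preserved under b-terminal resolutions: the formula
\[
K_Y + \bfD_{Y} = f^{*}\lb K_X + \bfD_X \rb + \sum_E b'(E;X,\bfD) E
\]
together with $b' \geq 0$ for a b-terminal pair shows that $K_Y + \bfD_Y$ differs from $f^{*}(K_X+\bfD_X)$ by an effective exceptional divisor, so bigness on $X$ transfers to any b-terminal modification. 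Applying BCHM (Birkar--Cascini--Hacon--McKernan) to the klt pair with big log canonical divisor produces a log canonical model $X^c$ on which $L := K_{X^c} + \bfD_{\alpha,X^c}$ is an ample $\mathbb{Q}$-Cartier divisor.

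\emph{Step 3: Finiteness via the ample polarisation.} The canonical model is a birational invariant of the pair, so $\Bir(X,\bfD_\alpha)$ acts on $X^c$ by biregular automorphisms preserving $L$. The group $\operatorname{Aut}(X^c,L)$ is a linear algebraic group acting faithfully on $H^0(X^c, mL)$ for $m$ sufficiently divisible, hence sits inside the finite-dimensional $\mathrm{PGL}$ of that space; since $L$ is ample and $L^{\dim X^c} > 0$, the standard argument behind Matsumura's theorem (the identity component must preserve the polarised embedding, hence act trivially) shows $\operatorname{Aut}(X^c,L)$ is finite. The main obstacle I anticipate is verifying in Step~1 that $\alpha \mapsto \bfD_\alpha$ is truly $\operatorname{Aut}_{\bk}(K)$-equivariant, which requires unwinding the construction of the ramification b-divisor; and in Step~2, that termination of the b-log MMP in the klt big case is legitimately supplied by BCHM on the trace, a reduction ensured by the results of Section~\ref{sec:blogMMP} that identify b-log MMP steps with ordinary log MMP steps on $(X,\bfD_X)$.
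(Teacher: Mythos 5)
Your overall strategy coincides with the paper's: the Skolem--Noether step producing an injection $\Out \Sigma \hookrightarrow \Bir(X,\bfD_\alpha)$ is exactly the paper's first move, and the finiteness of $\Bir(X,\bfD_\alpha)$ is exactly what the paper then needs. The difference is that the paper simply cites Iitaka's theorem on the finiteness of the birational automorphism group of a pair of log general type (together with the results of Fujino--Gongyo and Kov\'acs--Patakfalvi), whereas your Steps 2--3 sketch a proof of that cited result via the canonical model and a Matsumura-type argument. That is a legitimate route in outline, but it obliges you to justify the reduction to the canonical model, and one of your justifications is wrong.

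The concrete problem is in Step 2, in the sentence asserting that ``$b'\geq 0$ for a b-terminal pair'' makes $K_Y+\bfD_Y - f^{*}(K_X+\bfD_X)$ effective. B-terminality of $(Y,\bfD)$ bounds the discrepancies $b(F;Y,\bfD)$ of divisors $F$ exceptional \emph{over} $Y$; it says nothing about the numbers $b'(E;X,\bfD)$ attached to the divisors $E\subset Y$ that $f$ extracts. Indeed, by the construction in Theorem~\ref{th:existence_of_Brauer_terminal_resolution}, the divisors extracted beyond a log resolution are precisely those with \emph{non-positive} b-discrepancy, so the exceptional correction $\sum_E b'(E;X,\bfD)E$ has non-positive coefficients on those components (and uncontrolled sign on the log-resolution part). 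Consequently $K_Y+\bfD_Y$ differs from $f^{*}(K_X+\bfD_X)$ by a divisor that is not effective, sections of $m(K_Y+\bfD_Y)$ only inject into, rather than surject onto, those of $m(K_X+\bfD_X)$, and bigness does not transfer by the argument you give. This model-dependence of the log canonical section ring is exactly what the b-canonical hypothesis in the Grieve--Ingalls theorem quoted in the introduction is designed to control. The paper sidesteps the issue by keeping $\Bir(X,\bfD_\alpha)$ intrinsic to $(K,\bfD_\alpha)$ and invoking the finiteness theorem for the pair $(X,\bfD_{\alpha,X})$ on the model where $K_X+\bfD_{\alpha,X}$ is assumed big. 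If you want to keep your hands-on Step 3, you should either run it on that model directly or supply a genuine argument that bigness survives passage to the b-terminal resolution.
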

\begin{proof}
  By Theorem~\ref{MMPworks} and Remark~\ref{canonicalModelExists} there is a unique
  canonical model $X$ with $K_X+\bfD_{\alpha,X}$ nef and semi-ample.  We have an
  exact sequence
  \[1 \to \Sigma^*/K^* \to \Aut \Sigma \to \Out \Sigma \to 1.\]
  Note further, that the Skolem-Noether theorem shows that we have an injective map
  $\Out \Sigma \to \Bir (X,\bfD_\alpha)$.  By Iitaka's Theorem~\cite[Theorem
  11.12]{Iitaka1982} (see also~\cite[Theorem 1.2]{FG2014}
  and~\cite[\sch{Theorem}{Proposition} 6.5]{KP2017}) we have that
  $\Bir (X,\bfD_\alpha)$ is finite.\end{proof}

We also note that the ideas in this paper are used in~\cite{GrieveIngalls}, where two
related results are established.
\begin{theorem}~\cite[Theorem 1.3]{GrieveIngalls} Let $K$ be a finitely generated
  field with a b-divisor $\bfD$.  If $(X,\bfD)$ and $(Y,\bfD)$ have b-canonical singularities and $\ell(K_X+\bfD_X)$ and $\ell(K_Y +\bfD_Y)$ are both Cartier then
  \[
  \bigoplus_{n\geq 0} H^0(X,n\ell(K_X+\bfD_X))  =  \bigoplus_{n\geq 0} H^0(Y,n\ell(K_Y+\bfD_Y))
  \]
are naturally isomorphic rings.
\end{theorem}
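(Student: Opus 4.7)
The plan is to identify both graded rings with a single graded subring of $K[T]$ determined only by the function field $K$ and the b-divisor $\bfD$. Fix a rational top form $\omega\in\Omega^{\dim X}_K$, and for every divisorial valuation $v$ on $K$ set $v(K+\bfD):=v(\omega)+v(\bfD)$, where $v(\bfD)$ is the coefficient the b-divisor assigns to $v$; this is well-defined independently of the model realizing $v$, by the pushforward compatibility built into the definition of a b-divisor. Define
\[
R_n := \{\phi\in K : v(\phi)+n\ell\,v(K+\bfD)\ge 0\text{ for every divisorial valuation $v$ on $K$}\},
\]
and $R:=\bigoplus_{n\ge 0}R_n\cdot T^n$, a graded subring of $K[T]$. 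I would then show that $R$ equals each side of the asserted isomorphism as a subring of $K[T]$; since multiplication in each of the three rings is the ordinary multiplication of rational functions, the resulting identifications are automatically ring isomorphisms.

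The inclusion $R_n\subseteq H^0(X,n\ell(K_X+\bfD_X))$ is immediate: restricting the universal condition to valuations $v_E$ indexed by prime divisors $E$ of $X$ specializes to $\divi_X(\phi)+n\ell(K_X+\bfD_X)\ge 0$ on $X$. The substantive content is the reverse inclusion, which is where b-canonicity is used. Given $\phi\in H^0(X,n\ell(K_X+\bfD_X))$ and an arbitrary divisorial valuation $v$, realize $v$ as a prime $\widetilde E$ on a common proper birational model $f\colon Z\to X$. The b-discrepancy formula
\[
K_Z+\bfD_Z \;=\; f^*(K_X+\bfD_X)+\sum_F b'(F;X,\bfD)\,F,
\]
combined with b-canonicity, which forces $b'(F;X,\bfD)\ge 0$ for every $f$-exceptional $F$, gives $v(K+\bfD)\ge v(f^*(K_X+\bfD_X))$. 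On the other hand, the projection formula together with $f_*\mathcal{O}_Z=\mathcal{O}_X$ identifies $\phi$ with a section of $f^*n\ell(K_X+\bfD_X)$ on $Z$ (since $\ell(K_X+\bfD_X)$ is Cartier by hypothesis), whence $v(\phi)+n\ell\,v(f^*(K_X+\bfD_X))\ge 0$. Adding the two inequalities produces $v(\phi)+n\ell\,v(K+\bfD)\ge 0$, as required.

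Applying the same argument verbatim with $Y$ in place of $X$ yields $R_n=H^0(Y,n\ell(K_Y+\bfD_Y))$. Combining the two identifications gives the desired equality of graded rings, and because the isomorphism is literally the identity on rational functions, it preserves multiplication and is visibly natural.

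The main obstacle is the reverse inclusion above: b-canonicity is exactly what allows a section condition visible only on the fixed model $X$ to be promoted to a condition at every divisorial valuation of $K$. Everything else --- independence of $v(\omega)$ and $v(\bfD)$ from the chosen realization of $v$, invoking the projection formula, and checking that multiplication in $K$ matches the multiplication of sections --- is routine bookkeeping.
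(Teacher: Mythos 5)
The paper itself offers no proof of this statement: it is imported verbatim from \cite[Theorem 1.3]{GrieveIngalls}, so there is no internal argument to compare yours against. On its own terms your proof is correct, and it is the natural valuative repackaging of the standard proof that canonical rings of canonical models are birational invariants: instead of fixing one common resolution $W\to X$, $W\to Y$ and passing through $H^0(W,\cdot)$, you identify both rings with the model-independent graded ring $R$ cut out by all divisorial valuations of $K$, which has the added benefit of exhibiting the ring as an invariant of $(K,\bfD)$ alone. The two points that carry the weight are exactly the ones you flag, and both are covered by the paper's formalism: the quantities $v(\omega)$ and $v(\bfD)$ depend only on the place $v$ and not on the model realizing it (this is built into Definition~\ref{df:b-divisor} and the fact that $\cO_{Z,E}$ is the valuation ring of $v$), and b-canonicity does give $b'(F;X,\bfD)\ge 0$ for every exceptional $F$ because $b(F;X,\bfD)=r_F\,b'(F;X,\bfD)$ with $r_F\ge 1$, i.e.\ the paper's observation that b-canonical and b'-canonical coincide. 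One small step worth making explicit in a final write-up: for a prime divisor of $Z$ that is \emph{not} $f$-exceptional, the inequality $v(K+\bfD)\ge v(f^*(K_X+\bfD_X))$ holds as an equality because $f$ is an isomorphism near its generic point, so your estimate genuinely covers every divisorial valuation of $K$ and not only the exceptional ones.
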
  This leads to a birationally invariant notion of Kodaira dimension for b-divisors.
\sch{Let $G$ be a finite group.}{} In addition, \sch{}{for a finite group $G$,} they
show the existence of $G$-equivariant b-terminal resolutions of b-log pairs~\cite[Theorem 4.15]{GrieveIngalls} using Theorem~\ref{th:existence_of_Brauer_terminal_resolution}
of this paper.

\begin{remark} Let $K$ be a field, finitely generated over $\bk$.  Let $\Sigma$ be a
  central simple $K$-algebra with Brauer class $\alpha$.  Let $\Lambda$ be a maximal
  order $\Sigma$ with ramification data $(X,\bfD_{\alpha,X})$ as in
  Example~\ref{eg:b-divisor_of_brauer_pair}.  We may run the minimal model program
  for $\Lambda$ in the following way.  We first resolve singularities of
  $(X,\bfD_\alpha)$ to a b-terminal model by using
  Theorem~\ref{th:existence_of_Brauer_terminal_resolution} \sch{providing}{obtaining}
  a birational morphism $f:Y \to X$.  We choose a maximal order $\Lambda_Y$
  containing $f^*\Lambda$.  Next, we \sch{carry out the contractions of}{run} the
  b-log MMP.  For a birational contraction or a flip $g:Y \dashrightarrow Y'$, we take
  reflexive hull $\Lambda_{Y'} = (g_* \Lambda_Y)^{\vee \vee}$ which will be a maximal
  order by~\cite[Theorem 1.5]{MR0117252}.  If the log MMP terminates in a birational
  model (not a Mori fibre space) then so does the b-log MMP and we will obtain a
  maximal order $\Lambda_Z$ on a b-terminal minimal model $(Z,\bfD_{\alpha})$.  The
  pair $(Z,\bfD_{\alpha,Z})$ is canonically determined by $\Sigma$ up to log flops.
  Note further that in dimension two by~\cite[Theorem 1.2]{MR2180454}, the order
  $\Lambda_Z$ is unique up to Morita equivalence.  This result relies heavily on the
  possible algebraic structure of the order in dimension two and we do not have a
  similar result for higher dimensions.  So we ask the following question.
  \end{remark}
  \begin{question} To what extent is the maximal order on a minimal model uniquely
    determined?
    \end{question}
  \begin{question}
    How do Mori fibre spaces for the b-\logpair $(X,\bfD_\alpha)$
    interact with a maximal order $\Lambda$ on $X$?  For instance, is there a semi-orthogonal decomposition of the derived category?
    \end{question}
We work over an algebraically closed field $\bk$
throughout the paper. The characteristic of
$
 \bk
$
will be assumed to be $0$ unless otherwise stated.
For a scheme $X$, the set of points of codimension $c$
will be denoted by
$
 X ^{ ( c ) }
$.
A \emph{variety} is an integral scheme which is separated and of finite type
over
$
 \bk
$.

\begin{acknowledgements}
  This paper is an outcome of the workshop `Mori program for Brauer pairs in
  dimension three' held in 2014 at the American Institute of Mathematics.  The
  authors are indebted to all the support from the institute and their hospitality.
  \sch{D.C.}{DC} was partially supported by an ARC discovery project grant
  DP130100513.  \sch{C.I.}{CI} was partially supported by an NSERC Discovery grant.
  \sch{}{SJK was supported in part by NSF Grant DMS-1565352 and the Craig McKibben
    and Sarah Merner Endowed Professorship in Mathematics at the University of
    Washington.}
RK was partially supported by the National Science Foundation award DMS-1305377.
  \sch{S.O.}{SO} was partially supported by Grant-in-Aid for
  Scientific Research (No.~60646909, 16H05994,
16K13746,
16H02141,
16K13743,
16K13755,
16H06337)
 from JSPS and the Inamori Foundation.
\end{acknowledgements}

%
%

\section{Minimal model theory with boundary b-divisors}

\subsection{Recap on b-divisors}

We recall the notion of \emph{b-divisors} after \cite[Section 2.3.2]{MR2359340} (`b'
stands for `birational').  We will change notation slightly, by not fixing a
particular model.  For standard terminologies related to singularities in the Minimal
Model \sch{Theory}{Program}, readers \sch{can}{may} refer to \cite[Section
2.3]{MR1658959} or \cite{SingBook}.

Let $K$ be a field, finitely generated over our base field $\bk$ and let
$\eta =\Spec K$.  A \emph{model} of $K$ is an irreducible variety $X$ over $\bk$ with a
fixed map $\eta \to X$ over $\bk$, \sch{which is an isomorphism at}{mapping $\eta$
  isomorphically to} the generic point of $X$.

The category of schemes over $\bk$ and under
$
 \eta
$
will be denoted by
\sch{%
$
{} _{ K } / \Sch _{ / \bk }
$.%
}
{%
$
{} _{ K/ } \Sch _{ / \bk }
$.%
}
We take ${} _{ K / } \cM _{ / \bk} $ to be the full subcategory of objects $X$ which are normal and proper models of $K$,
where the maps are given by birational morphisms that commute with the fixed map from $\eta$.
An object of ${} _{ K / } \cM _{ / \bk}$ will be called a \emph{(proper) model of $K$}.

\begin{definition}
  Let $E$ be a prime divisor in some normal model of $K$.  The divisor $E$ gives us a
  discrete valuation $\nu$ on $K$ such that $\trdeg \kappa(\nu) = \trdeg(K)-1$.
  Recall that a {\it place} is an equivalence class of valuations with equal
  valuation rings.  We will call such valuations and places {\it geometric}.  Let $R$
  be the discrete valuation ring of $\nu$ and let $\xi$ be the closed point in
  $\Spec R$.  Let $X$ be a normal proper model of $K$.  We have maps
  $\Spec R \leftarrow \eta \to X$.  Since $X$ is proper, we obtain a unique extension
  $\Spec R \to X$.  The closure of the image of the closed point $\xi \in \Spec R$ in
  $X$ will be denoted by $ C_X E =\overline{\lc \xi \rc}$ and called the \emph{centre
    of $E$ on $X$}.  There exists a normal model $Y$ of $K$ with a birational
  morphism $f:Y \to X$, where the centre of the valuation $\nu$ is an irreducible
  divisor $E$.  Since $Y$ is normal, we have that the local ring $\cO_{Y,E} =R.$
  \sch{The}{and the} closed subset $ f ( E ) \subset X $ is \sch{also}{} $ C_X E $.
A divisor $E$ in some model is \emph{exceptional} over a model $X$ if
$
 C _{ X } E
$
has codimension greater than 1 in $X$.
\end{definition}

The group of Weil divisors on a normal variety $X$ will be denoted by
$
 \Div X.
$
One can define the pushforward of a Weil divisor under a proper morphism of normal
varieties 
(\cite[Section 1.4]{MR1644323})\sch{.
T}{, t}hus we obtain a functor
\begin{equation}
 \Div \colon _{ K / }\cM _{ / \bk } \to \Ab; \quad X \mapsto \Div X,
 \quad \lb f \colon Y \to X \rb \mapsto \lb f _{ * } \colon \Div Y \to \Div X \rb
\end{equation}
to the category of abelian groups
$
 \Ab
$.
By restricting to effective divisors, we also obtain the functor
$
 \Div _{ \ge 0 }
$
in the obvious way.


\begin{definition}[{$=$\cite[Definition 2.3.8]{MR2359340}}]\label{df:b-divisor}
An element
$
 \bfD
$
of the limit object
\begin{equation}\label{eq:group_of_b-divisors}
 \bDiv ( K ) := \varprojlim _{ X \in {} _{ K / } \cM _{ / \bk} } \Div(X) \in \Ab
\end{equation}
will be called a(n integral) \emph{b-divisor} on $K$.
Similarly, an element of the subset
\begin{equation}
 \bDiv _{ \ge 0 } ( K ) := \varprojlim _{ X \in {} _{ K / } \cM _{ / \bk} } \Div_{ \ge 0 }(X)
\end{equation}
will be called an effective (integral) b-divisor on $K$.

  A b-divisor on $X$ \sch{is equally}{may equivalently be} described as a formal
  integral sum
\begin{equation}
 \bfD = \sum _{ \Gamma } d _{ \Gamma } \Gamma,
\end{equation}
where $ \Gamma $ runs through all the geometric places of $K$, such that for each
normal model $X$ there are only finitely many $ \Gamma $ whose centre on $X$ is
divisorial and $ d _{ \Gamma } \neq 0 $.  A b-divisor \sch{}{$\bfD$} associates a
divisor \sch{on}{to} every normal model $X$ of $K$, \sch{so we have}{which is called}
the \emph{trace} of $\bfD$ on $X$ defined by the natural projection map
$\tr_X \colon \bDiv(K) \to \Div(X).$ Write $X^{(1)}$ for the set of irreducible divisors in $X$, or equivalently the set of codimension one points. 
We write
\begin{equation}
 \bfD _{ X } = \tr _{ X } \bfD = \sum _{ \Gamma  \in X^{(1)} } d _{ \Gamma } \Gamma
\end{equation}
(see \cite[Notation and Conventions 2.3.10]{MR2359340}).
Note that this is a finite sum for any particular model, and given a birational morphism $f:Y \to X$ we have $f_*\bfD_Y = \bfD_X$.  The b-divisor
$
 \bfD
$
is effective if and only if all the coefficients
$
 d _{ \Gamma }
$
 are non-negative.  Note that we can also interpret a b-Divisor $\bfD$ as function $d$ which associates a number $d_\nu$ to every geometric place $\nu$ of the field $K$ such that for any model $X$, the support of $d$ restricted to the divisors of $X$ is finite.  We will refer to the value of this function $d_\nu,$ or $d_E,$  on a geometric place $\nu$, or a divisor $E,$ as the {\it coefficient} of $\bfD$ along $E$.  In addition,  if a b-divisor $\bfD$ is defined for all models $Y$ over a fixed model $X$, then it extends naturally to all models.  Indeed, given any model $Z$, we can find a common model $Y$ with birational morphism
 $Y \to X$ and $f:Y \to Z$ and so the trace on $Z$ is given by
 \begin{equation}\label{pushforward:eqn} \bfD_Z  = f_*(\bfD_Y). \end{equation}
\end{definition}



We will freely extend the coefficients of b-divisors to $\bQ$.
All the notions defined so far are naturally extended to b-$\bQ$-divisors.
We will work primarily with b-divisors with rational coefficients so we will refer to them simply as b-divisors and we will write $\bDiv K$ for the set of b-divisors with rational coefficients.
Our goal is to develop the minimal model program for {\it b-\logpairs} $(X,\bfD)$ where $X$ is a normal proper variety and $\bfD$ is a b-divisor in $\bDiv(\bk(X))$ with coefficients in $[0,1) \cap \bQ$.
If all coefficients of the b-divisor $\bfD$ are
contained in the interval
$
 [0, 1) \cap \bQ
$, or equivalently, $\lfloor \bfD \rfloor =0$,  we will call the  b-divisor {\it fractional.}

First we will consider some motivating examples of b-divisors that occur naturally.
Recall that a divisor $D$ is $\bQ$-{\it Cartier} if there is a non-zero rational number $a$ such that $aD$ is a Cartier divisor. 

\begin{example}\label{eg:Cartier_closure}
Given a
$
 \bQ
$-Cartier divisor
$
 D
$
on $X$, its \emph{Cartier closure}
$
 \Dbar
$
is the b-divisor whose trace on a model $Y$ over $X$ given by
$
 f \colon Y \to X
$
is
$
 \Dbar_Y = f ^{ * } D
 $.  We extend $\Dbar$ to all models by pushforward as described above Equation~\ref{pushforward:eqn}.
\end{example}

\begin{example}\label{eg:principal_b-divisor}
Take a non-zero rational function
$
 \varphi \in K^{\times}.
$
  We associate a b-divisor $\bdiv(\varphi)$ in $\bDiv(K)$ 
whose trace on the model
$
 X
$
is defined by
\begin{equation}
 \bdiv ( \varphi )_{ X } := \divi_X  ( \varphi ).
\end{equation}
This will be called the \emph{principal b-divisor} associated to
$
 \varphi
$.
The equality
\begin{equation}
 \myov{ \divi _{ X } ( \varphi ) } = \bdiv ( \varphi ),
\end{equation}
where the left hand side is the Cartier closure of the Cartier divisor
$
 \divi _{ X } ( \varphi )
$,
is easily seen.
\end{example}

Among others, canonical b-divisors play quite an important role in this paper.
\begin{example}\label{eg:canonical_b-divisor}
Fixing a rational differential
$
 \omega \in \bigwedge ^{ \trdeg K } \Omega _{ K/k }
$
defines a \emph{canonical b-divisor}
$
 \bfK = \bdiv _{ X } \lb \omega \rb
$
on
$
 X
$.
On each model
$
 X
$,
the trace will be defined as
$
 \divi _{ X } \lb \omega \rb
$
associated to the rational global section
$
 \omega
$
of the canonical sheaf
$
 \cO _{ X } \lb K _{ X} \rb
$.
\end{example}

\begin{remark}
In the example above and the lemma below, a canonical divisor on $X$ means a specific choice of a Weil divisor on $X$
(not its linear equivalence class in the Weil divisor class group).
\end{remark}

\begin{remark}
  Given a canonical b-divisor
  $
\bfK \in \bDiv(K)
$,
for any model
$
X
$,
we will write
$
 K _{ X } = \bfK _{ X }
$.
\end{remark}

\begin{lemma}
A canonical b-divisor is uniquely determined by its trace on
any fixed model.
\end{lemma}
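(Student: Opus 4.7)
The plan is to exploit the fact that any two rational top forms on $K$ differ by multiplication by a non-zero element of $K$, and then use properness of the fixed model.

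First I would reduce the statement to the following: if $\bfK$ and $\bfK'$ are two canonical b-divisors with $\bfK_{X_0} = \bfK'_{X_0}$ for some fixed proper model $X_0$, then $\bfK = \bfK'$. By definition, $\bfK = \bdiv(\omega)$ and $\bfK' = \bdiv(\omega')$ for some rational top forms $\omega, \omega' \in \bigwedge^{\trdeg K} \Omega_{K/\bk}$. Since this module is a one-dimensional $K$-vector space, there is a unique $\varphi \in K^\times$ with $\omega' = \varphi \cdot \omega$.

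Next I would argue that on any normal model $Y$ of $K$, the trace $\bfK'_Y - \bfK_Y$ computes $\divi_Y(\varphi)$, since $\divi_Y(\varphi \omega) = \divi_Y(\varphi) + \divi_Y(\omega)$; in other words, $\bfK' - \bfK = \bdiv(\varphi)$ as b-divisors (compare Example~\ref{eg:principal_b-divisor}). The hypothesis then becomes $\divi_{X_0}(\varphi) = 0$, so $\varphi$ is a regular nowhere-vanishing function on the normal variety $X_0$, i.e.\ a global unit in $\cO_{X_0}(X_0)^\times$.

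Now I would invoke properness: since $X_0$ is a proper integral variety over the algebraically closed field $\bk$, we have $H^0(X_0, \cO_{X_0}) = \bk$, so $\cO_{X_0}(X_0)^\times = \bk^\times$ and hence $\varphi \in \bk^\times$ is a non-zero constant. But then $\divi_Y(\varphi) = 0$ on every model $Y$, so $\bdiv(\varphi) = 0$ and $\bfK' = \bfK$, as required. The argument really only uses two inputs — the one-dimensionality of top forms and the constancy of global units on a proper variety — so there is no serious obstacle; the only thing worth double-checking is that the comparison $\bfK' - \bfK = \bdiv(\varphi)$ is being made model-wise using the very definition of the trace, which it is.
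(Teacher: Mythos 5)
Your proof is correct, and its skeleton matches the paper's: both arguments reduce the lemma to the observation that the difference of the two candidate canonical (b-)divisors is the principal (b-)divisor of a single rational function $\varphi$, and that the hypothesis forces $\divi(\varphi)=0$ on the fixed model. Where you diverge is in the final step. The paper works on a model $f\colon Y\to X$ dominating the fixed one, writes the difference of two canonical divisors on $Y$ with equal pushforward as $\divi_Y(\varphi)$, and concludes via $\divi_Y(\varphi)=f^*\divi_X(\varphi)$ together with $f_*\divi_Y(\varphi)=\divi_X(\varphi)=0$; this is a purely local pullback argument that never invokes properness. You instead note that $\divi_{X_0}(\varphi)=0$ on a normal \emph{proper} model makes $\varphi$ a global unit, hence $\varphi\in H^0(X_0,\cO_{X_0})^\times=\bk^\times$ a nonzero constant, so $\bdiv(\varphi)=0$ on every model at once. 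Your route yields the slightly stronger conclusion that the two rational top forms agree up to a scalar, and it handles all models uniformly without passing to one dominating $X_0$; the paper's route is more elementary in that it uses only the compatibility of principal divisors with pullback and pushforward. Each step of your argument (one-dimensionality of $\bigwedge^{\trdeg K}\Omega_{K/\bk}$ over $K$, regularity of a rational function with effective divisor on a normal variety, constancy of global functions on a proper integral variety over $\bk=\overline{\bk}$) is valid, so the proof is complete.
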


\begin{proof} Let $X$ be a fixed model, and
fix the trace $K_X$ of a canonical b-divisor.  Let
$
 f \colon Y \to X
$
be a model over
$
 X
$,
and
$
 K _{ Y }, K' _{ Y }
$
be two canonical divisors on $Y$
such that
$
 f _{ * } K _{ Y } = K _{ X } = f _{ * } K' _{ Y }
$.
Then
$
 K _{ Y } - K' _{ Y } = \divi _{ Y } ( \varphi )
$
for some
$
 \varphi \in \bk ( Y )  = \bk ( X )
$
and the support of $K_Y - K_{Y'}$ is contained in the exceptional locus of
$
 f
$.
Since
$
 \divi _{ Y } ( \varphi ) = f ^{ * } \divi _{ X } ( \varphi )
$
and
$
 \divi _{ X } ( \varphi ) = f _{ * }  \divi _{ Y } ( \varphi ) = 0
$
by the assumption, we see
$
  \divi _{ Y } ( \varphi ) = 0
$.
\end{proof}


\begin{example}
Consider a Weil divisor
$
 D
$
on
$
 X
 $.
The \emph{proper transform b-divisor}
$
\Dhat
$
is the b-divisor whose trace on a model
$
 f \colon Y \to X
$
is defined by
$
 \Dhat_{ Y } = \lb f ^{ - 1 } \rb _{ * } D
 $, and naturally extended to all models via push-forward.  Note that the coefficient of $\Dhat$ on any exceptional divisor over $X$ is zero.  In fact, proper transform b-divisors are characterized by the support of the formal sum $\bfD = \sum d_\Gamma \Gamma$ over all geometric places $\Gamma$ being finite. 
\end{example}

\begin{example}\label{eg:b-divisor_of_nonabelian_H1}
Let
$
 G
$
be a finite group. Recall that an element
$
 [ L ] \in H ^{ 1 }\lb K, G \rb
$
is represented by an isomorphism class of a Galois extension
$
 L / K
$
with a homomorphism  $\Gal(L/K) \to G$ which we can assume to be injective.
Let $X$ be a model of $K$ and let 
 $\pi \colon \Xtilde \to X
$
be the normalization of
$
 X
$
in the field
$
 L
$, so that the field homomorphism
$
 \pi ^{ * } \colon \bk ( X ) \to \bk ( \Xtilde ) \simto L
$
is canonically identified with the extension
$
 L / K
$.

Since
$
 \pi \colon \Xtilde \to X
$
is again a Galois extension with Galois group a subgroup of
$
 G
$, the Riemann-Hurwitz Theorem tells us that 
there exists an effective
$
 \bQ
$-divisor
$
 \bfD _{ X }
$
on
$
 X
$
such that
\begin{equation}
 K _{ \Xtilde } = \pi _{ X } ^{ * } \lb K _{ X } + \bfD _{ X } \rb
\end{equation}
as $\bQ$-divisors on $\Xtilde$. One can easily verify that the divisors
$
 \bfD _{ X }
$
give rise to a fractional b-divisor
$
 \bfD \in \bDiv _{ \ge 0 } \lb K \rb
$,
 which will be called the \emph{ramification b-divisor}. 

 On the other hand, let $n = \trdeg K$ and fix $\omega \in \Omega^n_{K/\bk}$ and consider
   $\omega \otimes 1 \in \Omega^n_{K/\bk} \otimes_K L \simeq \Omega^n_{L/\bk}.$
   We associate canonical b-divisors
   $
 \bfK = \bdiv  \lb \omega \rb
$ and $\bfKtilde = \bdiv \lb \omega\otimes 1 \rb$
in $\bDiv (K )
$
and
$
\bDiv ( L )
$ respectively.
Then we have the equality of b-divisors
$
 \pi ^{ * } \lb \bfK + \bfD \rb = \bfKtilde
$.
As we will see later in \pref{eg:equivariant_MMP_as_b-MMP}, the MMP for b-\logpairs applied to the pair
$
 ( X, \bfD )
$
is equivalent to the $G$-equivariant MMP for
$
 \Xtilde
$.

\end{example}

\begin{example}\label{eg:b-divisor_of_brauer_pair}
This example is the original motivation for the authors to establish the Minimal Model Theory
for b-\logpairs.
Let
$
 K
$
be a field, finitely generated over $\bk$ and
let $
 \alpha \in H ^{ 2 } \lb K, \bG _{ m } \rb = \Br K
$
be a Brauer class.   A \emph{Brauer pair}, 
$
 ( X, \alpha )
$
is a pair of a normal proper model $X$ of $K$ and an $\alpha \in \Br K$.
 Then we can define the effective divisor
\begin{equation}
 \bfD _{ \alpha,X} = \sum _{ D \in X ^{ ( 1 ) } } \lb 1 - \frac { 1 } { r _{ D } }\rb D ,
\end{equation}
where
$
 r _{ D } \in \bZ _{ \ge 1 }
$
is the \emph{ramification index} of the Brauer class
$
 \alpha \in \bk ( X )
$
along the prime divisor
$
 D
$,
 which is defined via the Artin-Mumford map \cite{ArtinMumford}.  Given $\alpha \in \Br (K)$ we have
 \[ H^2(\bk(X),\mathbb{G}_m) \stackrel{\ram}{\to} \bigoplus_{ D \in X^{(1)}} H^1(\bk(D),\Q/\Z)\]  and we define $r_D$ above to be the order of $\ram_D(\alpha)$.  The divisors $\bfD_{\alpha,X}$ give a fractional b-divisor.
 We note that the divisor $\bfK_X +\bfD_{\alpha,X}$ can be viewed as the canonical divisor of a maximal order in a central simple $K$ algebra representing $\alpha,$ as noted in \cite{MR2180454}, or can be interpreted as the canonical divisor of the associated root stack \cite[Appendix B]{rootstack}.

 We also note that,  in Example~\ref{eg:b-divisor_of_nonabelian_H1}, if we have a cyclic Galois cover, we can treat it analogously to a Brauer class, if we use the map
\begin{align}
 H^1(\bk(X), \mu) \stackrel{\ram}{\to} \bigoplus_{ D \in X^{(1)}} H^0(\bk(D),\Q/\Z)
\end{align}
to define the coefficients of the ramification b-divisor.
\end{example}

\begin{example}
  The above example can be generalized to the setting of Rost modules.  This includes
  algebraic $K$\sch{}{-}theory, Chow cohomology, motivic cohomology, and more.  In
  \cite{Rost}, the notion of Rost (cycle) modules is defined.  Given a Rost module
  $M$ and a normal scheme $X$, we obtain maps
  $\partial_D : M ( \bk ( X ) ) \to M(k(D))$ for all irreducible divisors $D$ in $X.$
  Given an element $\alpha \in M ( \bk ( X ) )$ only finitely many
  $\partial_D(\alpha)$ are non-zero as in Definition 2.1 of \cite{Rost}.  So given
  such an $\alpha$, if the $\partial_D(\alpha)$ has finite order $r_D$ for all $D$,
  (for example if $\alpha$ has finite order), we can define a ramification b-divisor
  by
 \begin{equation}
 \bfD _{ \alpha,X } = \sum _{ D \in X ^{ ( 1 ) } } \lb 1 - \frac { 1 } { r _{ D } } \rb D.
\end{equation} 
This also includes the case of abelian Galois covers from Example~\ref{eg:b-divisor_of_nonabelian_H1}.
\end{example}

%
%

\subsection{b-discrepancy}\label{sec:b-discrepancy}

In this section we introduce the discrepancy for b-divisors.  
First we will recall some facts about the usual notion of discrepancy before we introduce our modification for b-divisors.
Recall the following definition.
\begin{definition}
  Let $(X,D)$ be a $\Q$-Gorenstein \logpair and let $f:Y \to X$ be a birational morphism.
  The discrepancy of divisors $E$ in $Y$ that are exceptional over $X$ for the \logpair $(X,D)$ are defined by the equation 
  \[ K_Y +f^{-1}_*D = f^*(K_X+D) +\sum_{E} a(E;X,D) E\]
  where the sum is taken over $f$\sch{}{-}exceptional divisors $E$, and $f^{-1}_*D$
  denotes the proper transform of $D$.  The discrepancy only depends on the divisor
  and not the choice of model $Y,$ as reflected in the notation.
\end{definition}

%

%





%

%
%

\begin{definition}\label{df:b-log_pair}
A \emph{b-\logpair} is a pair
$
 \lb X, \bfD \rb
$
of a normal variety $X$ and an effective b-$\bQ$-divisor $\bfD$ on $X$.  If
$
 K _{ X } + \bfD _{ X }
$
is $\bQ$-Cartier we say that the pair $(X,\bfD)$ is $\bQ$-Gorenstein.
The b-divisor
$
 \bfK + \bfD
$
will be called the \emph{log canonical b-divisor} of the pair
$
 ( X, \bfD )
$.
\end{definition}

In the rest of this paper, unless otherwise stated, we  assume
that all b-divisors are {\it fractional.}  Recall that this means all coefficients are in $[0,1) \cap \bQ$.  We will also tacitly assume all pairs are $\bQ$-Gorenstein, unless otherwise stated.

\begin{definition}\label{ramification:index:defn}
Let
$
 \lb X, \bfD \rb
$
be a fractional b-\logpair. For each divisor
$
 E
$
over
$
 X
$,
let
$
 d _{ E } \in [ 0, 1 ) \cap \bQ
$
be the coefficient of
$
 \bfD
$
along
$
 E
$.
The \emph{ramification index}
$
 r _{ E } \in [1, \infty ) \cap \bQ
$
of
$
 \bfD
$
along
$
 E
$
is defined by the equivalent equations:
\begin{align}\label{eq:ramification_index_of_b-divisor}
 r _{ E } = \frac{1}{1 - d _{ E }} \quad\quad d_E = 1 - \frac{1}{r_E}.
\end{align}

\begin{definition}\label{df:discrepancy_of_b-log_pair}
Let
$
 ( X, \bfD )
$
be a $\bQ$-Gorenstein b-\logpair and
$E$ an exceptional divisor over $X$.
Take a model
$
 f \colon Y \to X
$
such that the centre
$
 C_Y E \subset Y
$
is a divisor.
Then there exists \sch{}{a $b' ( E; X, \bfD )\in \bQ$ such that}
the following equality of
$
 \bQ
$-divisors
\begin{align}\label{eq:definition_of_b'-discrepancy}
\lb \bfK + \bfD \rb _{ Y }
=
f ^{ * } \lb \bfK + \bfD \rb _{ X } + b' ( E; X, \bfD ) E
\end{align}
\sch{}{holds}
on an open neighbourhood of the generic point of
$
 E \subset Y
$.
The rational number
$
 b' ( E; X, \bfD )
$
will be called the
\emph{b'-discrepancy} of the b-\logpair
$
 ( X, \bfD )
$
with respect to the divisor
$
 E
$
over
$
 X
$.
\end{definition}

\begin{example}\label{proper_transform_same_discrepancy}
Consider a usual \logpair
$
 ( X, D )
$
and the proper transform b-divisor
$
 \Dhat
$.
Then it follows from the definition that for any exceptional divisor
$
 E
$
over $X$,
\begin{equation}\label{eq:discrepancy_as_b-discrepancy}
 a ( E; X, D ) = b' ( E; X, \Dhat ) = b(E;X,\Dhat).
\end{equation}
In this sense, for exceptional divisors,
the usual discrepancy can be regarded as the b-discrepancy of a proper transform b-divisor.

Moreover, when
$
 D = 0
$,
 the equality \eqref{eq:discrepancy_as_b-discrepancy}
is valid for \emph{any} divisor over $X$;
recall that a geometric valuation of
$
 \bk ( X )
$
which admits a centre on $X$ is called exceptional if and only if
its centre on $X$ is not divisorial.
\end{example}

\begin{remark}\label{rm:comparison_of_discrepancies}
For a divisor
$
 E
$
over $X$ we have the equality
\begin{align}\label{eq:b-discrepancy_vs_usual_discrepancy}
 b' ( E; X, \bfD ) = a ( E; X, \bfD _{ X } ) + d _{ E },
\end{align}
where
$
 a ( E; X, \bfD _{ X } )
$
is the usual discrepancy of the \logpair
$
 ( X, \bfD _{ X } )
$
with respect to the divisor
$
 E
$.
In particular, if
$
 \bfD
$
is effective, we always have the inequality
\begin{align}
b' ( E; X, \bfD ) \ge a ( E; X, \bfD _{ X } ).
\end{align}
Equality holds precisely if
$\bfD$ is not supported on $E$.
\end{remark}

It is more natural to consider a slight modification of b'-discrepancy.  This modification is motivated by Corollary~\ref{cr:b-lt_b-lc_vs_a-lt_a-lc}, Corollary~\ref{cr:b-dlt_and_resolution}, Remark~\ref{rmk:bvsbprime} and Example~\ref{eg:equivariant_MMP_as_b-MMP}.
The \emph{b-discrepancy} of the b-\logpair
$
 ( X, \bfD )
$
with respect to the divisor
$
 E
$
over
$
 X
$
is defined by either of the equivalent equations
\begin{align}\label{definition_of_b-discrepancy}
  b ( E; X, \bfD ) = b' ( E; X, \bfD ) \cdot r _{ E }\\
\label{logdiscrepancy}  b(E;X,\bfD)+1 =r_E(a(E;X,\bfD)+1).
\end{align}
\end{definition}
Note that one can interpret the second equation above as saying that the b-log discrepancy is a positive multiple of the a-log discrepancy.

We say that the b-\logpair $( X, \bfD )$ is \emph{\logsmooth} if the associated pair
$( X, \bfD _{ X })$ is \sch{a}{} \logsmooth.  The following lemma will be frequently
used in this paper.
\begin{lemma}\label{lm:log_smooth_resolution}
For any b-\logpair
$( X, \bfD )$,
consider any \logres
$
 f \colon Y\to X
$
of the \logpair
$
 ( X, \bfD _{ X } )
$.
Then
$
 ( Y, \bfD_Y )
$
is \logsmooth.
\end{lemma}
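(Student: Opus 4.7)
The plan is to unpack the definition of log smooth for a b-log pair and observe that the support of $\bfD_Y$ is automatically contained in the snc divisor produced by the log resolution.

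First I would recall that $(Y,\bfD_Y)$ being \logsmooth means, by definition, that $Y$ is smooth and $\Supp(\bfD_Y)$ is a simple normal crossings divisor on $Y$. Smoothness of $Y$ is immediate from the definition of \logres. The only real content is to control $\Supp(\bfD_Y)$.

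Next I would show the containment
\[
\Supp(\bfD_Y) \ \subseteq\ \Exc(f)\ \cup\ f_{*}^{-1}\Supp(\bfD_X).
\]
Take any prime divisor $\Gamma \subset Y$ with nonzero coefficient $d_\Gamma$ in $\bfD_Y$. Either $\Gamma$ is $f$-exceptional, in which case $\Gamma \subseteq \Exc(f)$ and we are done, or $f(\Gamma)$ is a prime divisor $D$ on $X$ with $f_*\Gamma = D$. In the latter case, the pushforward compatibility for b-divisors, $f_* \bfD_Y = \bfD_X$, forces the coefficient of $D$ in $\bfD_X$ to equal $d_\Gamma \neq 0$, so $D \subseteq \Supp(\bfD_X)$ and hence $\Gamma = f_*^{-1} D \subseteq f_*^{-1}\Supp(\bfD_X)$.

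Finally, by the definition of a log resolution of $(X,\bfD_X)$, the divisor $\Exc(f) \cup f_*^{-1}\Supp(\bfD_X)$ has simple normal crossings on the smooth variety $Y$. Any subdivisor of an snc divisor is snc, so $\Supp(\bfD_Y)$ has snc, which is exactly the condition that $(Y,\bfD_Y)$ is \logsmooth.

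There is no real obstacle here: the substance is just the pushforward compatibility $f_* \bfD_Y = \bfD_X$ that is built into Definition~\ref{df:b-divisor}, together with the defining property of a log resolution. The only thing to be careful about is making sure the reader sees that the \emph{coefficients} of $\bfD_Y$ along non-exceptional divisors are forced to be those of $\bfD_X$ on the images, so that no new components can appear outside of $\Exc(f) \cup f_*^{-1}\Supp(\bfD_X)$.
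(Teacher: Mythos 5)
Your proof is correct and follows essentially the same route as the paper: the paper's proof simply notes that $\Supp\bfD_Y$ is a subset of the snc divisor $\Exc(f)\cup (f^{-1})_*\Supp\bfD_X$ guaranteed by the definition of a \logres, and concludes. You merely spell out the containment (via the pushforward compatibility $f_*\bfD_Y=\bfD_X$) that the paper asserts without comment.
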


\begin{proof}
Let
$
 f \colon Y \to X
$
be a \logres of the pair
$
 ( X, \bfD _{ X } )
$.
Then, by definition,
$
 \Exc { ( f ) }
 \cup
 ( f ^{ - 1 } ) _* \bfD _{ X }
$
is an \sch{SNC}{snc} divisor. Since
$
 \Supp \bfD _{ Y }
$
is a subset, it is \sch{SNC}{snc} as well.
\end{proof}

\begin{definition}\label{df:notions_of_singularities_for_b-log_pairs}
Let
$
 ( X, \bfD )
$
be a $\bQ$-Gorenstein b-\logpair. 
The \emph{minimal b-discrepancy} of the pair
$
 ( X, \bfD )
$
is defined by
\begin{align}
 \bdiscrep ( X, \bfD )
 : =
 \inf \lc b ( E; X, \bfD ) \mid E \ \mbox{is an exceptional divisor over } X \rc.
\end{align}
Note that the infimum is among all divisors over
$X$ which are exceptional.

We say
\begin{align}
( X, \bfD ) \ \mbox{is}
\begin{cases}
\mbox{b-terminal}\\
\mbox{b-canonical}\\
\mbox{b-log terminal (b-lt)}\\
\mbox{b-log canonical (b-lc)}\\
\end{cases}
\mbox{if} \ 
 \bdiscrep { ( X, \bfD ) }
\begin{cases}
 > 0\\
 \ge 0\\
 > - 1\\
 \ge - 1.\\ 
 \end{cases}
\end{align}

We also make corresponding definitions using $b'(E;X,\bfD)$ in place of $b(E;X,\bfD)$ and so will refer to b-\logpairs $(X,\bfD)$ as being $b'$-terminal, $b'$-canonical, $b'$-log terminal, or $b'$-log canonical.

Similarly we say
$
 ( X, \bfD )
$
is \emph{b-Kawamata log terminal (b-klt)} if it is b-lt and
fractional.
Finally we define the notion of \emph{b-dlt} pairs as follows;
a b-\logpair
$
 ( X, \bfD )
$
is \emph{b-divisorially log terminal (b-dlt)} if
there exists a \logres
$
 f \colon Y \to X
$
of
$
 ( X, \bfD )
$
such that
\begin{equation}\label{eq:b-dlt}
 b ( E; X, \bfD ) > - 1
\end{equation}
holds for any $f$-exceptional divisor $E$.
\end{definition}

\begin{lemma}\label{lm:a_and_b-discrepancies}
Let
$
 ( X, \bfD )
$
be a fractional $\bQ$-Gorenstein b-\logpair, and
$
 E
$
be a divisor over
$
 X
$.
Then
$
  a ( E; X, \bfD _{ X } ) > ( \mbox{resp. }\ge ) - 1
 \iff
  b ( E; X, \bfD ) > (  \mbox{resp. }\ge ) - 1
$.
\end{lemma}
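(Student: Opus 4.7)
The plan is to use equation \eqref{logdiscrepancy} from the preceding definition, which gives the identity
\[
b(E;X,\bfD) + 1 = r_E \bigl( a(E;X,\bfD_X) + 1 \bigr).
\]
Since $\bfD$ is fractional, we have $d_E \in [0,1) \cap \bQ$, and hence $r_E = 1/(1-d_E) \in [1,\infty) \cap \bQ$. In particular $r_E$ is strictly positive, so multiplication by $r_E$ preserves both strict and non-strict inequalities with $0$. This immediately yields
\[
a(E;X,\bfD_X) + 1 > 0 \iff b(E;X,\bfD) + 1 > 0
\]
and
\[
a(E;X,\bfD_X) + 1 \ge 0 \iff b(E;X,\bfD) + 1 \ge 0,
\]
which is exactly the claim.

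Before applying this, I would briefly sanity-check \eqref{logdiscrepancy} against the definitions, since the lemma rests entirely on that identity. Combining \eqref{eq:b-discrepancy_vs_usual_discrepancy} with the definition $b(E;X,\bfD) = r_E \cdot b'(E;X,\bfD)$ from \eqref{definition_of_b-discrepancy} gives
\[
b(E;X,\bfD) = r_E \bigl( a(E;X,\bfD_X) + d_E \bigr) = r_E \cdot a(E;X,\bfD_X) + r_E d_E,
\]
and substituting $d_E = 1 - 1/r_E$ produces $b(E;X,\bfD) = r_E \cdot a(E;X,\bfD_X) + r_E - 1$, which rearranges to \eqref{logdiscrepancy}.

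There is essentially no obstacle: the content of the lemma is exactly the positivity of $r_E$ applied to the log-discrepancy reformulation. The only conceptual point worth flagging in the proof is that the fractional hypothesis on $\bfD$ is what guarantees $r_E$ is finite and positive, so that the equivalence goes through cleanly in both directions.
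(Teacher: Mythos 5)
Your proof is correct and follows exactly the paper's (very brief) argument: the paper's proof simply says the lemma "follows immediately from Equation~\eqref{logdiscrepancy}," which is precisely the identity $b(E;X,\bfD)+1 = r_E\,(a(E;X,\bfD_X)+1)$ together with the positivity of $r_E$ that you spell out. Your additional sanity check deriving \eqref{logdiscrepancy} from \eqref{eq:b-discrepancy_vs_usual_discrepancy} and \eqref{definition_of_b-discrepancy} is a harmless (and correct) elaboration of what the paper leaves implicit.
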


\begin{proof}
  This follows immediately from Equation~\ref{logdiscrepancy} in the definition of b-discrepancy.
\end{proof}

\pref{lm:a_and_b-discrepancies} immediately implies the following corollaries.

\begin{corollary}\label{cr:b-lt_b-lc_vs_a-lt_a-lc}
Let
$
 ( X, \bfD )
$
be a fractional b-\logpair. Then
$
 ( X, \bfD )
$
is b-lt (resp. b-lc) if and only if the \logpair
$
 ( X, \bfD _{ X } )
$
is lt (resp. lc) in the usual sense.
\end{corollary}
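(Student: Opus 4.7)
The plan is to deduce the corollary directly from Lemma~\ref{lm:a_and_b-discrepancies}, which provides, for each divisor $E$ over $X$, the sign-matching equivalences $a(E; X, \bfD_X) \geq -1 \iff b(E; X, \bfD) \geq -1$ and $a(E; X, \bfD_X) > -1 \iff b(E; X, \bfD) > -1$. These rest on the identity $b(E;X,\bfD) + 1 = r_E \cdot (a(E;X,\bfD_X) + 1)$ with $r_E \in [1, \infty)$, so that the two log discrepancies always have matching sign and in fact $|b(E) + 1| \geq |a(E) + 1|$. Applying this divisor-by-divisor to the definitions: $(X, \bfD)$ is b-lc iff $b(E; X, \bfD) \geq -1$ on every exceptional $E$ over $X$, iff $a(E; X, \bfD_X) \geq -1$ on every such $E$, iff $(X, \bfD_X)$ is lc in the usual sense. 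This handles the b-lc case completely.

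For the b-lt case, the only mildly delicate point is that $\bdiscrep(X, \bfD) > -1$ is a priori strictly stronger than the pointwise statement ``$b(E; X, \bfD) > -1$ for every exceptional $E$'', since an infimum of strictly positive quantities may fail to be strictly positive. The main obstacle is therefore to transfer the strict infimum bound across the equivalence. I plan to handle this by invoking the standard fact, valid for fractional $\bQ$-Gorenstein pairs, that the strict bound $\discrep(X, \bfD_X) > -1$ is equivalent to the pointwise assertion $a(E; X, \bfD_X) > -1$ on all exceptional $E$ over $X$ (proved by pulling back to a fixed log resolution, where only finitely many exceptional divisors appear and further exceptional discrepancies are controlled). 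Combined with the pointwise equivalence from the lemma, which transfers in both directions, this immediately yields b-lt iff lt. No obstacle beyond this routine infimum-versus-pointwise bookkeeping is expected.
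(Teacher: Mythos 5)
Your proposal is correct and follows essentially the same route as the paper, which simply observes that Lemma~\ref{lm:a_and_b-discrepancies} (i.e.\ the identity $b(E;X,\bfD)+1=r_E\lb a(E;X,\bfD_X)+1\rb$ with $r_E\ge 1$) immediately yields the corollary. Your extra care about the infimum-versus-pointwise issue in the lt case is a legitimate point that the paper leaves implicit, and your proposed resolution via a fixed \logres is the standard one.
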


\begin{corollary}\label{cr:b-dlt_and_resolution}
A b-\logpair
$
 ( X, \bfD )
$
is b-dlt if and only if \sch{}{$b ( E; X, \bfD ) > - 1$ holds for}
any exceptional divisor 
$
 E
$
over $X$ whose centre on $X$ is contained in the non-\sch{SNC}{snc} locus of
$
 ( X, \bfD _{ X } )
$\sch{satisfies {\eqref{eq:b-dlt}}}{}.
\end{corollary}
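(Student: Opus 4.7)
The plan is to reduce the corollary to the analogous well-known characterization of ordinary dlt pairs (Szab\'o's theorem; see, e.g., \cite[Proposition~2.40]{MR1658959}), using \pref{lm:a_and_b-discrepancies} as the bridge between b-discrepancies and ordinary discrepancies.

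First, I would observe that by \pref{lm:a_and_b-discrepancies}, for \emph{any} divisor $E$ over $X$, the inequality $b(E;X,\bfD)>-1$ is equivalent to $a(E;X,\bfD_X)>-1$. Consequently, the defining condition for $(X,\bfD)$ to be b-dlt --- that there exist a log resolution $f\colon Y\to X$ of $(X,\bfD_X)$ with $b(E;X,\bfD)>-1$ for every $f$-exceptional $E$ --- is equivalent to the statement that the ordinary pair $(X,\bfD_X)$ admits a log resolution $f$ satisfying $a(E;X,\bfD_X)>-1$ on all $f$-exceptional divisors. This is precisely the standard definition of $(X,\bfD_X)$ being dlt.

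Next, I would invoke Szab\'o's theorem, which asserts that an ordinary pair $(X,\Delta)$ is dlt if and only if $a(E;X,\Delta)>-1$ for every exceptional divisor $E$ over $X$ whose centre on $X$ is contained in the non-snc locus of $(X,\Delta)$. Applying this with $\Delta=\bfD_X$ and then translating back via \pref{lm:a_and_b-discrepancies} one last time, the ordinary discrepancy condition on exceptional divisors with centre in the non-snc locus of $(X,\bfD_X)$ becomes the b-discrepancy condition stated in the corollary. This chain of equivalences yields both directions simultaneously.

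The main obstacle is really not internal to this argument at all: the nontrivial input is Szab\'o's theorem, which relies on the existence of log resolutions that are isomorphisms over the snc locus and on a careful analysis of the associated exceptional divisors. Once that theorem is accepted as a black box, the proof here is a purely formal translation through \pref{lm:a_and_b-discrepancies}, and no further geometric input is required. I would also take a moment to check that the notion of log resolution of $(X,\bfD)$ used in \pref{df:notions_of_singularities_for_b-log_pairs} coincides with a log resolution of $(X,\bfD_X)$ --- which is immediate from \pref{lm:log_smooth_resolution} --- so that the two definitions of ``log resolution'' being matched are genuinely the same.
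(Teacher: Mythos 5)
Your proposal is correct and follows essentially the same route as the paper: both reduce the statement to the classical equivalence of the two definitions of dlt (Szab\'o's theorem, \cite[Proposition 2.44]{MR1658959}) and use Lemma~\ref{lm:a_and_b-discrepancies} to translate each condition between the $a$- and $b$-discrepancy settings. No substantive difference.
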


\begin{proof}
  The equivalence of corresponding conditions for a-discrepancy ($=$ equivalence of
  two different definitions of the notion of dlt pairs) is well known
  \cite[Proposition 2.\sch{40}{44}]{MR1658959}\sch{}{, \cite{MR1322695}}. On the
  other hand, one can immediately check that each of them is respectively equivalent
  to the b-counterpart because of \pref{lm:a_and_b-discrepancies}.
\end{proof}

\begin{remark}  We also note that b-\logpair $(X,\bfD)$ is b'-terminal (resp. b'-canonical) if and only if it is b-terminal (resp. b-canonical).  This follows immediately from definition~\ref{definition_of_b-discrepancy}.
\end{remark}

\begin{remark}\label{rmk:bvsbprime}
  If $(X,\bfD_X)$ is not log canonical then its discrepancy is equal to
  $-\infty$.  This observation allows us to see that it is also true that
$
 ( X, \bfD )
$
is b'-lc if and only if
$
 ( X, \bfD _{ X } )
$
is lc.  Similarly, if
$
 ( X, \bfD _{ X } )
$
is klt, then
$
 ( X, \bfD )
$
is b'-klt. On the other hand, as the following example shows, the converse does not hold. 

Let $ X $ be a cone over an elliptic curve $ E $ and let $ f \colon Y \to X $ \sch{be}{}
its minimal resolution.  \sch{The}{Note that the} exceptional divisor is isomorphic
to $ E $.  Let $ \bfD $ be a b-divisor on $ X $ such that $ \bfD _{ X } = 0 $ and the
coefficient of $\bfD$ along $E$ satisfies $ d _{ E } > 0 $.  Then one can check that
$ ( X, \bfD ) $ is b'-lt, though $ X $ is (strictly) lc.  Actually one can find a
Brauer class $ \alpha \in \Br ( \bfk ( X ) ) $ whose ramification along $E$
corresponds to an \'etale double cover of $E$, so that the associated b-divisor
$ \bfD_\alpha $ has $ d_E= \frac{1}{2} $.  One can similarly check that
\pref{cr:b-dlt_and_resolution} is not true for b'-discrepancy.
\end{remark}

%
%

\begin{example}\label{proper_transform_same_singularites}
Let
$
 ( X, D )
$
be a fractional \logpair
and consider the proper transform b-divisor
$
 \Dhat
$.
Then
\begin{align}
( X, D ) \ \mbox{is}
\begin{cases}
\mbox{terminal}\\
\mbox{canonical}\\
\mbox{klt}\\
\mbox{purely log terminal}\\
\mbox{dlt}\\
\mbox{log canonical}\\
\end{cases}
\iff
( X, \Dhat ) \ \mbox{is}
\begin{cases}
\mbox{b-terminal}\\
\mbox{b-canonical}\\
\mbox{b-klt}\\
\mbox{b-log terminal}\\
\mbox{b-dlt}\\
\mbox{b-log canonical}\\
\end{cases}
\end{align}
(see \cite[Definition 2.34]{MR1658959}).
\end{example}


%
%


In order to \sch{begin}{run} the b-log MMP with b-terminal singularities, it is
necessary to first resolve singularities to a b-terminal model.  The existence of
such a resolution is established in
Theorem~\ref{th:existence_of_Brauer_terminal_resolution}, and the proof of this
theorem is the goal of the rest of this section.

\begin{lemma}\label{lm:brauer_discrepancy_under_extraction_of_bad_divisor}
Let
$
 ( X, \bfD )
$
be a $\bQ$-Gorenstein b-\logpair and
$
 f \colon Y \to X
$
a model on which the trace
$
 \lb \bfK + \bfD \rb _{ Y }
$
is
$
 \bQ
$-Cartier.
Suppose
$
 b(E;  X, \bfD )\le 0
$
holds for any
$f$-exceptional prime divisor
$E$.
Then for any exceptional divisor
$F$
over
$Y$, we have the inequality
\begin{align}
b(F;  Y, \bfD ) \ge b(F;  X, \bfD ).
\end{align}
\end{lemma}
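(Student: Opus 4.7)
The plan is a standard comparison argument, whose main input is that the pullback of an effective $\bQ$-Cartier divisor under a birational morphism of normal varieties is again effective, and whose main bookkeeping is comparing the two defining equations for the $b'$-discrepancy on a common model. Since $r_F > 0$ depends only on the b-divisor $\bfD$ and not on the model, \eqref{definition_of_b-discrepancy} immediately reduces the claim to proving the inequality $b'(F;Y,\bfD) \ge b'(F;X,\bfD)$, so I will work with $b'$.

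First, introduce the $\bQ$-divisor
\[\Delta := f^*(\bfK+\bfD)_X - (\bfK+\bfD)_Y = -\sum_E b'(E;X,\bfD)\, E,\]
where the sum is over $f$-exceptional prime divisors. This $\Delta$ is $\bQ$-Cartier, since $(\bfK+\bfD)_X$ is $\bQ$-Cartier by the tacit $\bQ$-Gorenstein hypothesis on the pair $(X,\bfD)$, while $(\bfK+\bfD)_Y$ is $\bQ$-Cartier by the hypothesis of the lemma. The hypothesis $b(E;X,\bfD)\le 0$ forces $b'(E;X,\bfD) = b(E;X,\bfD)/r_E \le 0$ for every $f$-exceptional $E$, so $\Delta$ is effective.

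Next, choose a model $g\colon Z \to Y$ on which $F$ is realised as a prime divisor and set $h := f \circ g$. Apply \eqref{eq:definition_of_b'-discrepancy} to $F$ with respect to $Y$ and with respect to $X$, and subtract the resulting equations coefficient-wise along $F$ in a neighbourhood of its generic point; this is legitimate precisely because $(\bfK+\bfD)_Y$ is $\bQ$-Cartier, so that $g^*(\bfK+\bfD)_Y$ makes sense. This gives
\[b'(F;Y,\bfD) - b'(F;X,\bfD) = \mult_F\!\bigl(h^*(\bfK+\bfD)_X - g^*(\bfK+\bfD)_Y\bigr) = \mult_F\!\bigl(g^*\Delta\bigr),\]
where the second equality uses $h = f\circ g$ and the definition of $\Delta$. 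Because $\Delta$ is an effective $\bQ$-Cartier $\bQ$-divisor on $Y$ and $g$ is a morphism of normal varieties, $g^*\Delta$ is effective, so $\mult_F(g^*\Delta) \ge 0$. Combined with the reduction above this yields $b(F;Y,\bfD) \ge b(F;X,\bfD)$.

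No genuine obstacle arises beyond the bookkeeping; the only point of care is the role of the $\bQ$-Cartier hypothesis on $(\bfK+\bfD)_Y$, which is exactly what enables the comparison of the two discrepancy equations on $Z$ and the pullback $g^*\Delta$ that drives the inequality.
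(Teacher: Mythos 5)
Your proof is correct and follows essentially the same route as the paper's: both reduce to $b'$, pass to a common model $Z$ over $Y$, subtract the two defining equations for the discrepancy of $F$, and observe that the difference is the multiplicity of $F$ in the pullback of the effective $f$-exceptional divisor $-\sum_E b'(E;X,\bfD)E$. The paper phrases the last step via the nonnegative multiplicities $m_F E$ of $F$ in $g^*E$ rather than via effectivity of $g^*\Delta$, but this is only a difference of packaging.
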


\begin{proof}
We show the claim for b'-discrepancies, since it is equivalent.
Let
$
 g \colon Z \to Y
$
be a model over $Y$ on which the centre of
$F$
is divisorial. Then around the generic point of
$F$ we have
\begin{align}
\begin{split}
K_Z+ \bfD _{ Z }
=
g^*(K_Y+  \bfD _{ Y })
+b'(F;  Y, \bfD  )F\\
=
g^* \lb f^*(K_X+  \bfD _{ X })+ \sum_E b'(E;  X, \bfD )E \rb
+b'(F;  Y, \bfD  )F\\
=
(f\circ g)^*(K_X+  \bfD _{ X })
+ \lb\lb\sum_E b'(E;  X, \bfD )m_F E\rb + b'(F;  Y, \bfD _ ) \rb F.
\end{split}
\end{align}
Since
$b'(E;  X, \bfD )\le 0$
and
$m_F E\ge 0$
hold for all
$E$,
we see
\[
b'(F; X, \bfD ) =\sum_E b'(E; X, \bfD )m_F E + b'(F; Y, \bfD  ) \le b'(F; Y,
\bfD  ).\qedhere
\] 
\sch{Thus we conclude the proof.}{}
\end{proof}

\begin{remark}\label{rm:finiteness_of_non_terminal_witnesses}
  Assume that the pair $( X, \bfD )$ is b-lt and fractional, so that the associated
  pair $ ( X, \bfD _{ X } ) $ is klt. Then by \cite[Proposition 2.36(2)]{MR1658959}
  and \pref{rm:comparison_of_discrepancies}, there are only finitely many exceptional
  divisors over $X$ with non-positive b-discrepancies.

\end{remark}

We will provide a second proof of the result below in Corollary~\ref{secondproof}.
The second proof uses toroidal geometry and is longer but \sch{}{it} is also more
explicit \sch{, and is}{and} more elementary in the sense \sch{}{that} it does not use
the result \cite[Exercise 5.41]{MR2675555} which depends on \cite{MR2601039}.

\begin{theorem}\label{th:existence_of_Brauer_terminal_resolution}
Let
$
 ( X, \bfD )
$
be a fractional b-\logpair such that
$X$ is a quasi-projective variety over
$\bk$.
Then there exists a projective birational morphism
$
 f \colon Y \to X
$
such that the b-\logpair
$
 ( Y, \bfD )
$
is b-terminal and $Y$ is $\bQ$-factorial.
\end{theorem}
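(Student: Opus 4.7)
The plan is to proceed in two steps: first, pass to a log resolution to reduce to the b-log terminal case, and then extract the finitely many exceptional divisors of non-positive b-discrepancy to arrive at a b-terminal model.

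For the first step, choose a log resolution $g \colon Y_0 \to X$ of the usual log pair $(X, \bfD_X)$; since $X$ is quasi-projective, we may take $Y_0$ smooth and projective over $X$, hence in particular $\bQ$-factorial. By \pref{lm:log_smooth_resolution} the b-log pair $(Y_0, \bfD)$ is log smooth, and because $\bfD$ is fractional, $(Y_0, \bfD_{Y_0})$ is klt. Applying \pref{cr:b-lt_b-lc_vs_a-lt_a-lc}, this implies $(Y_0, \bfD)$ is b-lt. Then \pref{rm:finiteness_of_non_terminal_witnesses} applies: there are only finitely many exceptional divisors $E_1, \ldots, E_n$ over $Y_0$ with $b(E_i; Y_0, \bfD) \leq 0$.

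For the second step, invoke \cite[Exercise 5.41]{MR2675555}, whose proof rests on the existence of minimal models in \cite{MR2601039}, to construct a projective birational morphism $h \colon Y \to Y_0$ from a $\bQ$-factorial variety $Y$ whose exceptional divisors are precisely $E_1, \ldots, E_n$. We claim that $(Y, \bfD)$ is b-terminal. Let $F$ be any exceptional divisor over $Y$. Since each $E_i$ is an honest divisor on $Y$, $F$ cannot equal any $E_i$; moreover, the centre of $F$ on $Y_0$ has codimension at least two (otherwise $F$ would be the strict transform on $Y$ of a divisor of $Y_0$, hence itself a divisor on $Y$), so $F$ is exceptional over $Y_0$. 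By the finiteness statement, $b(F; Y_0, \bfD) > 0$. The hypothesis of \pref{lm:brauer_discrepancy_under_extraction_of_bad_divisor} is satisfied because each $h$-exceptional divisor $E_i$ satisfies $b(E_i; Y_0, \bfD) \leq 0$; the lemma then yields
\[
b(F; Y, \bfD) \ \geq \ b(F; Y_0, \bfD) \ > \ 0.
\]
Hence $(Y, \bfD)$ is b-terminal, and $f = g \circ h \colon Y \to X$ is the desired projective birational morphism.

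The main obstacle is the extraction step: producing a $\bQ$-factorial model over $Y_0$ whose exceptional divisors are exactly a prescribed finite collection of divisors of non-positive discrepancy. This is the nontrivial input from BCHM-level MMP machinery, and is precisely the reason that the paper provides a second, more elementary proof (\pref{secondproof}) via a constructive toric/toroidal argument that avoids \cite{MR2601039}.
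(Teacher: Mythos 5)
Your proposal is correct and follows essentially the same route as the paper's own proof: a log resolution to reach a \logsmooth (hence klt, hence b-lt) model, finiteness of exceptional divisors with non-positive b-discrepancy via \pref{rm:finiteness_of_non_terminal_witnesses}, extraction of exactly those divisors by \cite[Exercise 5.41]{MR2675555}, and then \pref{lm:brauer_discrepancy_under_extraction_of_bad_divisor} to conclude b-terminality. The only difference is that you spell out the final verification (that any divisor exceptional over $Y$ is exceptional over $Y_0$ and distinct from the extracted ones) in more detail than the paper does; your closing remark about the toroidal alternative also matches the paper's stated motivation for \pref{secondproof}.
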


\begin{proof}
By
\pref{lm:log_smooth_resolution},
we find a projective \logres
$X_1 \to X$
so that the pair
$
 ( X_1, \bfD _{  X _{ 1 } } )
$
is \logsmooth. Since $X_1$ is \logsmooth and $\bfD_{X_1}$ is fractional, the pair $
 ( X_1, \bfD _{  X _{ 1 } } )
$ is klt.   As noted in
\pref{rm:finiteness_of_non_terminal_witnesses},
there are only finitely many exceptional divisors
over
$X_1$
whose b-discrepancies are non-positive.
Let
$S$
be the set of such divisors. 
Then we can use \cite[Exercise 5.41]{MR2675555}
to obtain a birational projective morphism
\begin{equation}
 g \colon Y \to X _{ 1 }
\end{equation}
from a normal $\bQ$-factorial variety $Y$ such that the set of
$g$-exceptional divisors is exactly the set $S$.
By \pref{lm:brauer_discrepancy_under_extraction_of_bad_divisor},
we see that the b-\logpair
$
 ( Y, \bfD )
$
is b-terminal.
\end{proof}

Given a b-\logpair $(X,\bfD),$ we call the pair $(Y,\bfD),$ supplied by the above Theorem~\ref{th:existence_of_Brauer_terminal_resolution}, a {b-terminal resolution} of $X$.  Note that a b-log terminal resolution need not be \logsmooth, and it is not clear if any b-\logpair $(X,\bfD)$ admits a resolution which is simultaneously \logsmooth and b-terminal.

\begin{question}
  Let $(X,\bfD)$ be a fractional b-\logpair.  Is there always a projective birational morphism $Y \to X$ such that the b-\logpair $(Y,\bfD)$ is \logsmooth and b-terminal?
\end{question}

%
%

\section{The Minimal Model {Program} for b-\logpairs}\label{sec:blogMMP}

We define the notions for the  minimal model program
for b-\logpairs. For the corresponding notions
of (log) MMP, see
\cite[3.31]{MR1658959}.

\begin{definition}
Let
$( X, \bfD )$ be a fractional b-lc pair,
$X$ be quasi-projective and $\bQ$-factorial, equipped with
a projective morphism
$
 \pi \colon X \to U
$
to a normal quasi-projective variety $U$.

\begin{itemize}
\item
The pair
$
 ( X, \bfD )
$
is \emph{$\pi$-minimal} 
if
$
 K_X + \bfD _{ X }
$
is $\pi$-nef.  Note that the definition of a minimal model does not depend on the type of singularities of the pair.

\item
An \emph{extremal contraction} of
$( X, \bfD )$
over
$U$
is a morphism
$
 f \colon X \to Y
$
over $U$ which is an extremal contraction of the lc pair
$
 ( X,  \bfD _{ X } )
$.
We say $f$ is \emph{divisorial/small/a Mori fibre space} if $f$
is divisorial/small/a Mori fibre space in the usual sense.

\item A {\it flip} of the pair $(X,\bfD)$ over $U$ is a birational map
  $X \dashrightarrow X'$ over $U$ which is a flip of the pair $(X,\bfD_X)$ in the
  usual sense.  \sch{}{Note that this is consistent with $\bfD$ being a
    b-divisor. Since $X \dashrightarrow X'$ is an isomorphism in codimension $1$ on
    both $X$ and $X'$, it follows that $\bfD_{X'}$ is necessarily the proper
    transform of $\bfD_X$ on $X'$.}
  
\item
A \emph{minimal model program of
$( X, \bfD )$ over (or relative to) $U$}
is a sequence of birational maps over $U$
\begin{equation}\label{eq:b-mmp}
 X
 =
 X_0 \dasharrow X_1 \dasharrow \cdots \dasharrow X_n
\end{equation}
which is a minimal model program of the \emph{usual} lc pair
$
 ( X,  \bfD _{ X } )
$ over $U$
(see \pref{cr:b-lt_b-lc_vs_a-lt_a-lc}).
\end{itemize}
\end{definition}

\begin{remark}
If
$\varphi \colon X \dasharrow Y$
is either a divisorial contraction or a flip
of the b-\logpair
$
 ( X, \bfD )
$
over $U$, then clearly
$
 \bfD _{ Y }
 =
 \varphi_* \bfD _{ X }
$.
Therefore any subsequence
\[X_i \dasharrow X_{i+1} \dasharrow \cdots \dasharrow X_{j}
\]
of \eqref{eq:b-mmp} is a b-\sch{}{log }MMP for the pair $ ( X_i, \bfD ) $.

\end{remark}

\begin{lemma}
\label{lm:negativity_lemma_for_b-discrepancy}
Let
$\varphi \colon X \dasharrow Y$
be either a divisorial contraction or a flip of the b-\logpair
$( X, \bfD )$
over
$U$.
Then for any exceptional divisor
$E$
over
$X$ we get the inequality
\begin{equation}\label{eq:negativity_lemma_for_b-discrepancy}
 b(E;  X, \bfD ) \le b(E;  Y, \bfD ).
\end{equation}
If
$C_XE$
or
$C_YE$
is contained in the exceptional locus of
$\varphi$
or
$\varphi^{-1}$,
then \eqref{eq:negativity_lemma_for_b-discrepancy} becomes a strict inequality.
\end{lemma}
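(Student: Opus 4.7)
The plan is to reduce the claim to the classical negativity lemma for $a$-discrepancies. From equation~\eqref{logdiscrepancy} we have the two identities
\[
b(E; X, \bfD) + 1 = r_E \bigl( a(E; X, \bfD_X) + 1 \bigr), \qquad b(E; Y, \bfD) + 1 = r_E \bigl( a(E; Y, \bfD_Y) + 1 \bigr).
\]
The essential observation is that the ramification index $r_E = 1/(1 - d_E)$ depends only on the coefficient $d_E$ of the b-divisor $\bfD$ along the geometric valuation corresponding to $E$. This coefficient is intrinsic to $\bfD$ and independent of the model, so the \emph{same} factor $r_E$ appears on both sides.

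Next I would invoke the classical negativity lemma for $a$-discrepancies (see for instance \cite[Lemma 3.38]{MR1658959}) applied to the lc pair $(X, \bfD_X)$, which is indeed lc by \pref{cr:b-lt_b-lc_vs_a-lt_a-lc}. To apply the classical statement one needs to know that $\bfD_Y$ is the appropriate transform of $\bfD_X$: for a divisorial contraction this is $\bfD_Y = \varphi_* \bfD_X$, as noted in the remark preceding the lemma, and for a flip $\varphi$ is an isomorphism in codimension one, so $\bfD_Y$ is automatically the proper transform of $\bfD_X$. The classical lemma then yields
\[
a(E; X, \bfD_X) \le a(E; Y, \bfD_Y),
\]
with strict inequality precisely when $C_X E$ or $C_Y E$ is contained in the exceptional locus of $\varphi$ or $\varphi^{-1}$.

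Finally, since $r_E > 0$, multiplying the displayed inequality by $r_E$ and comparing with the two identities above gives $b(E; X, \bfD) \le b(E; Y, \bfD)$, with the same strictness condition under the stated hypothesis.

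I do not anticipate a serious obstacle: the argument is essentially a bookkeeping translation through the identity $b + 1 = r_E(a + 1)$. The only subtlety is to observe that $r_E$ is a genuine invariant of the b-divisor and therefore does not change when one passes from $X$ to $Y$; once this is secured, the classical monotonicity and strictness of $a$-discrepancies transfer without loss to $b$-discrepancies.
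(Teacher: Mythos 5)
Your proposal is correct and is essentially the paper's own argument: the paper likewise reduces to the classical negativity lemma \cite[Lemma 3.38]{MR1658959} via the identity $b(E;Y,\bfD)-b(E;X,\bfD)=r_E\bigl(a(E;Y,\bfD_Y)-a(E;X,\bfD_X)\bigr)$ (obtained there from \eqref{eq:b-discrepancy_vs_usual_discrepancy} rather than \eqref{logdiscrepancy}, which is an equivalent bookkeeping choice). Your explicit remarks that $r_E$ is model-independent and that $\bfD_Y$ is the correct transform of $\bfD_X$ are exactly the points the paper leaves implicit.
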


\begin{proof}
It follows from
\eqref{eq:b-discrepancy_vs_usual_discrepancy}
that
\begin{equation*}
  \sch{}{\frac{b ( E; Y, \bfD ) - b( E; X, \bfD )}{r _{ E }}} 
  = b' ( E; Y, \bfD ) - b'(E; X, \bfD ) = a ( E; Y, \bfD _{ Y } ) - a( E; X, \bfD _{
    X } ). 
\end{equation*}
Therefore the conclusions follow from the negativity lemma
for usual discrepancies \cite[Lemma 3.38]{MR1658959}.
\end{proof}

\begin{corollary}\label{MMP_preserves_bterminal_etc}
The notion of b-terminality (resp. b-canonicity, b-log terminality, b-klt, b-log canonicity)
is preserved under 
b-MMP.
\end{corollary}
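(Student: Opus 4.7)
The plan is to induct on the length of the b-MMP sequence, so it suffices to treat a single step $\varphi \colon X \dasharrow Y$ (a divisorial contraction or a flip).

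For the b-lt, b-lc, and b-klt cases, my strategy is to invoke \pref{cr:b-lt_b-lc_vs_a-lt_a-lc}, which identifies these conditions on $(X,\bfD)$ with the corresponding usual conditions (lt, lc, klt) on the ordinary \logpair $(X,\bfD_X)$. Since a b-MMP step is by definition an MMP step for the pair $(X,\bfD_X)$, the standard preservation results for the ordinary (log) MMP (see~\cite[Section 3]{MR1658959}) apply directly. Fractionality of $\bfD$, the extra requirement for b-klt, is automatic because the b-coefficients $d_E$ are attached to valuations and are independent of the model.

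For b-terminality and b-canonicity I would verify the discrepancy condition on every exceptional divisor $E$ over $Y$. If $\varphi$ is a flip, then it is an isomorphism in codimension one, so $E$ is also exceptional over $X$, and \pref{lm:negativity_lemma_for_b-discrepancy} gives $b(E;Y,\bfD) \ge b(E;X,\bfD)$, which is positive (respectively non-negative) by hypothesis. If $\varphi \colon X \to Y$ is a divisorial contraction of a prime divisor $F$, the exceptional divisors over $Y$ split into two classes: those already exceptional over $X$, which are handled as above by \pref{lm:negativity_lemma_for_b-discrepancy}; and the contracted divisor $F$ itself.

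The contracted divisor $F$ is where the only genuine subtlety lies, since $F$ is not exceptional over $X$, so the statement of \pref{lm:negativity_lemma_for_b-discrepancy} does not apply verbatim. However, its proof uses only \eqref{eq:b-discrepancy_vs_usual_discrepancy} together with the classical negativity lemma \cite[Lemma 3.38]{MR1658959}, both of which are valid for arbitrary divisors over the models. My plan is to combine this extended negativity, in its strict form (available because $C_X F = F \subset \Exc(\varphi)$), with the direct computation
\[
b(F;X,\bfD) \;=\; r_F\bigl(a(F;X,\bfD_X)+d_F\bigr) \;=\; r_F(-d_F+d_F) \;=\; 0,
\]
which immediately yields $b(F;Y,\bfD) > 0$. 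This strict positivity is precisely what is needed to promote both b-terminality and b-canonicity from $X$ to $Y$, completing the inductive step.
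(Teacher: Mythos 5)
Your proof is correct and follows essentially the same route as the paper: the two key points --- applying Lemma~\ref{lm:negativity_lemma_for_b-discrepancy} to divisors exceptional over $X$, and observing that the contracted divisor $F$ satisfies $b(F;X,\bfD)=0$ so that the strict form of the negativity lemma yields $b(F;Y,\bfD)>0$ --- are exactly the paper's argument. Your explicit remark that the lemma as \emph{stated} covers only divisors exceptional over $X$, so that for the contracted divisor one must invoke its proof (via \eqref{eq:b-discrepancy_vs_usual_discrepancy} and the classical negativity lemma) rather than its statement, is a point of care the paper glosses over, and your reduction of the b-lt/b-lc/b-klt cases to the ordinary log MMP via Corollary~\ref{cr:b-lt_b-lc_vs_a-lt_a-lc} is a harmless variant of the paper's ``the arguments are essentially the same.''
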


\begin{proof}
Since the arguments are essentially the same, we only discuss the case of
b-terminality. Let
$
 ( X, \bfD )
$
be a b-terminal pair and consider a step of b-MMP
$
 \varphi \colon X \dasharrow Y
$.
If
$
 E
$
is an exceptional divisor over
$
 X
$,
then
\pref{lm:negativity_lemma_for_b-discrepancy}
implies
\[
 b(E;  Y, \bfD ) \ge  b(E;  X, \bfD ) > 0.
\]

If
$
 \varphi
$
is a divisorial contraction which contracts the prime divisor
$
 E \subset X
$,
then since
$
 b(E;  X, \bfD ) = 0
$
by the definitions,
we can use the second claim of
\pref{lm:negativity_lemma_for_b-discrepancy}
to see
\[
 b(E;  Y, \bfD ) >  b(E;  X, \bfD ) = 0.
\]
\end{proof}

\begin{example}\label{eg:log_MMP_as_b-MMP}
  Recall that the b-discrepancy of the proper transform b-divisor $(X,\widehat{D})$ is the same as the usual discrepancy as discussed in Examples~\ref{proper_transform_same_discrepancy},~\ref{proper_transform_same_singularites}.  In addition, the contractions and flips of the b-log MMP are simply those of the log MMP, so running the b-log MMP for the b-\logpair $(X,\widehat{D})$ is identical to running the log MMP for the pair $(X,D)$.  However, in the b-log MMP you may resolve singularities before running the MMP and 
  we note that if you first resolve $(X,\widehat{D})$ to a model which is terminal, canonical, lt or lc, the minimal model (if it exists) will have the same type of singularities by Corollary~\ref{MMP_preserves_bterminal_etc}.
\end{example}

\noindent
Next, we explain how the $G$-equivariant MMP is a special case of the b-log MMP.
\begin{example}\label{eg:equivariant_MMP_as_b-MMP}
  Recall the b-\logpair associated to the equivariant setting\sch{ which is}{,}
  discussed in \pref{eg:b-divisor_of_nonabelian_H1}.  Let $ E \subset Y $ be a prime
  $ f $-exceptional divisor and set
  $ \Etilde := \lb \pi _{ Y } ^{ - 1 } ( E ) \subset \Ytilde \rb _{\mathrm{red}} $.
  Then it follows that $ a ( \Etilde; \Xtilde ) = b ( E; X, \bfD ) $ by the following
  computation.  Let $f:Y \to X$ be a birational morphism and let
  $\ftilde:\Ytilde \to \Xtilde$ be the corresponding map on the normalizations of $Y$
  and $X$ in the Galois cover.  Write $\pi_X:\Xtilde \to X$ and
  $\pi_Y: \Ytilde \to Y$.  We have that
 \begin{eqnarray*}
   \sum_{\tilde{E}} a(\Etilde;\Xtilde)\Etilde & = & K_\Ytilde-\ftilde^*K_\Xtilde\\
   & = & \pi_Y^*(K_Y+\bfD_Y) - \ftilde^*\pi_X^*(K_X+\bfD_X) \\
   & = & \pi_Y^*(K_Y+\bfD_Y) - \pi_Y^*f^*(K_X+\bfD_X) \\
   & = & \pi_Y^*\left(\sum_Eb'(E;X,\bfD)E\right) \\
   & = & \sum_\Etilde b'(E;X,\bfD)r_E \Etilde \\
   & = & \sum_{\Etilde} b(E;X,\bfD) \Etilde
   \end{eqnarray*}

   Since $K_{\tilde{X}} = \pi^*(K_X+\bfD_X)$ we obtain that the MMP of the pair
   $ ( X, \bfD ) $ \sch{precisely corresponds}{corresponds precisely} to the
   $ G $-equivariant MMP of $ \Xtilde $.
\end{example}

%
%

\subsection{Fundamental theorems for b-\logpairs}

In this section we establish some foundational results about the b-log \sch{Minimal
  Model Theory}{MMP} by transplanting the corresponding results \sch{for}{from (the
ordinary)} log \sch{Minimal Model Theory}{MMP}.

\begin{theorem}
Let
$
 \pi \colon ( X, \bfD ) \to U
$
be a b-lc pair over $U$. If
$
 K _{ X } + \bfD _{ X }
$
is not nef, then there exists an extremal contraction.
If it is a flipping contraction, then the flip exists.
\end{theorem}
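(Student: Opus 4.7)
The plan is to reduce both assertions directly to the corresponding statements of the usual log MMP applied to the log pair $(X, \bfD_X)$. This is possible because all the relevant notions in the statement --- b-lc singularities, extremal contractions, flipping contractions, flips, and the nefness of $K_X + \bfD_X$ --- have, by definition, been set up so that they depend only on $(X, \bfD_X)$, not on the full b-divisor $\bfD$.

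First I would invoke \pref{cr:b-lt_b-lc_vs_a-lt_a-lc} to observe that since $(X, \bfD)$ is b-lc, the ordinary log pair $(X, \bfD_X)$ is lc in the usual sense. Since $X$ is $\bQ$-factorial and quasi-projective and $\pi \colon X \to U$ is projective over a normal quasi-projective base, the hypotheses required to run the log MMP for the lc pair $(X, \bfD_X)$ over $U$ are satisfied. The assumption that $K_X + \bfD_X$ is not $\pi$-nef is then exactly the input needed for the cone and contraction theorems. Applying the cone theorem for lc pairs (e.g.\ \cite[Theorem 3.7]{MR1658959} in the klt case, and its extension to lc pairs by Ambro and Fujino) produces a $(K_X + \bfD_X)$-negative extremal ray, and the contraction theorem gives the associated extremal contraction $f \colon X \to Y$ over $U$. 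By our definition this is precisely an extremal contraction of the b-\logpair $(X, \bfD)$ over $U$.

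For the second assertion, suppose the contraction is small, i.e.\ a flipping contraction of $(X, \bfD_X)$. A flip of $(X, \bfD)$ over $U$ was defined to be a flip of $(X, \bfD_X)$ in the usual sense, so it suffices to quote the existence of log flips for lc pairs. In the klt case this is the celebrated result of \cite{BCHM}; the extension to lc pairs was established by Birkar and Hacon--Xu, so the flip $X \dashrightarrow X^+$ exists over $U$.

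The argument is essentially a bookkeeping reduction; the main nontrivial step is citing the correct generality of existence of flips --- one needs the lc (not just klt) version, since our hypothesis is only that $(X, \bfD)$ is b-lc, which corresponds to $(X, \bfD_X)$ being lc but not necessarily klt. Once that reference is in place, no further argument specific to b-divisors is needed, because the b-divisor data does not intervene in the formation of either the contraction or the flip: after the step $\varphi \colon X \dasharrow X'$, the trace $\bfD_{X'}$ is simply determined by push-forward (the map is an isomorphism in codimension one for a flip, and for a divisorial contraction the push-forward is the standard one), consistent with $\bfD$ being a b-divisor.
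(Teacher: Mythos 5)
Your proposal is correct and follows essentially the same route as the paper: the paper's proof likewise observes that $(X,\bfD_X)$ is log canonical and then quotes the cone/contraction theorem for lc pairs (Fujino) and the existence of lc flips (Birkar, Hacon--Xu), exactly the two references you identify as the crux.
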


\begin{proof}
Since
$
 ( X, \bfD _{ X } )
$
is log canonical, the assertions immediately follow from
\cite[Theorem 1.19]{MR3238112} and
\cite[Corollary 1.2]{MR2929730}, respectively.
\end{proof}

\begin{theorem}\label{MMPworks}
Let
$
 ( X, \bfD )
$
be a fractional b-canonical pair.
Suppose either the dimension of $X$ is at most 3, or
$
 \bfD _{ X }
$
is big and
$
 K _{ X } + \bfD _{ X }
$
is pseudo-effective. Then
the pair admits a minimal model and the log-canonical divisor of the minimal model is semi-ample.
\end{theorem}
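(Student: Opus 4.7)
The plan is to deduce the theorem from the corresponding results for the ordinary log MMP applied to the associated klt pair $(X,\bfD_X)$, and then to transport the conclusion back using Corollary~\ref{MMP_preserves_bterminal_etc}.

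First I would note that since $(X,\bfD)$ is b-canonical and fractional, it is in particular b-klt, so by Corollary~\ref{cr:b-lt_b-lc_vs_a-lt_a-lc} the pair $(X,\bfD_X)$ is klt in the ordinary sense. Moreover, by the very definition of the b-log MMP, a sequence of divisorial contractions and flips $X = X_0 \dashrightarrow \cdots \dashrightarrow X_n$ is a b-log MMP for $(X,\bfD)$ if and only if it is an ordinary log MMP for the klt pair $(X,\bfD_X)$. Thus the problem of running the b-log MMP on $(X,\bfD)$ reduces to the problem of running the usual log MMP on $(X,\bfD_X)$ (taking $U=\Spec\bk$).

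I would then split into the two hypothesized cases. If $\dim X\le 3$, the log MMP for the klt pair $(X,\bfD_X)$ terminates by the classical results of Mori, Kawamata, Shokurov and Koll\'ar, and the log abundance theorem in dimension at most three (Keel--Matsuki--McKernan, Shokurov) shows that the resulting nef divisor $K_{X_n}+\bfD_{X_n}$ is in fact semi-ample. If instead $\bfD_X$ is big and $K_X+\bfD_X$ is pseudo-effective, I would appeal to the main theorem of Birkar--Cascini--Hacon--McKernan to produce a log minimal model $X_n$ on which $K_{X_n}+\bfD_{X_n}$ is nef; since bigness of $\bfD_X$ is preserved under the MMP steps (they are birational contractions), the Kawamata--Shokurov base-point-free theorem applied to the klt pair $(X_n,\bfD_{X_n})$ then upgrades nefness to semi-ampleness.

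In either case the output is a finite sequence of b-log MMP steps terminating at a pair $(X_n,\bfD)$ with $K_{X_n}+\bfD_{X_n}$ nef and semi-ample, so $(X_n,\bfD)$ is minimal in the sense of the definition given at the start of this section. By Corollary~\ref{MMP_preserves_bterminal_etc} the pair $(X_n,\bfD)$ is again b-canonical. The main technical point, rather than a genuine obstacle, is the citation bookkeeping between the two cases: one needs termination together with log abundance for klt pairs in dimensions at most three, while in higher dimension bigness of $\bfD_X$ is essential both to invoke BCHM and for the base-point-free theorem to deliver semi-ampleness from nefness. No new ideas beyond the translations provided by Corollary~\ref{cr:b-lt_b-lc_vs_a-lt_a-lc} and Corollary~\ref{MMP_preserves_bterminal_etc} are required.
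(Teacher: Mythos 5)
Your proposal is correct and follows essentially the same route as the paper: the paper's proof simply observes that $(X,\bfD_X)$ is klt and cites BCHM for the big/pseudo-effective case and the three-fold log MMP with log abundance for the low-dimensional case. You have merely spelled out the reduction (via Corollary~\ref{cr:b-lt_b-lc_vs_a-lt_a-lc} and the fact that b-log MMP steps are by definition log MMP steps for $(X,\bfD_X)$) and the semi-ampleness bookkeeping in more detail than the paper does.
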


\begin{proof}
Since the \logpair
$
 ( X, \bfD _{ X } )
$
is klt, the assertions immediately follow from the results of
\cite{MR2601039}
and
\cite{MR1284817}.
\end{proof}

\begin{remark}\label{canonicalModelExists}
Let
$
 ( X, \bfD )
$
be a fractional b-canonical pair.
    If we assume that the log MMP terminates for $(X,\bfD_X)$
    then the pair admits a minimal model.  If the pair admits a canonical model then it is unique by \cite[Theorem 3.52]{MR1658959}.
\end{remark}

\begin{definition}
Let
$
 \varphi \colon X \dasharrow X'
$
be a birational map between normal varieties. A \emph{common resolution of}
$
 \varphi
$
is a smooth variety
$
 W
$
and projective birational morphisms
$
 p, p' \colon W \rightrightarrows X, X'
$
such that
$
 p' = \varphi \circ p
$
\sch{}{as rational maps}
(see \pref{fg:common_resolution}).  Note that one can be obtained by resolving the singularities of the closure of the graph of $\phi$ in $X \times X'$.
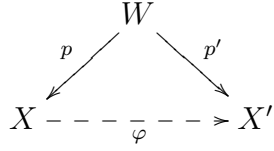
\begin{figure}[h]
\begin{minipage}{.49 \linewidth}
\begin{align*}
\xymatrix{
& W \ar[ld] _{ p } \ar[rd] ^{ p ' }& \\
X \ar@{-->}[rr] _{ \varphi } & & X'\\
}
\end{align*}
\caption{common resolution}
\label{fg:common_resolution}
\end{minipage}
\end{figure}

\end{definition}

\begin{proposition}\label{pr:b-minimal_models_are_small_birational}
Let
$
 ( X, \bfD )
$
and
$
 ( X', \bfD )
$
be b-terminal minimal models over $U$
of the same b-terminal pair.
Then any birational map
$
 \varphi \colon X \dasharrow X'
$
over
$
 U
$
is an isomorphism in codimension $1$.
\end{proposition}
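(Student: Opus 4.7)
The plan is to emulate the classical argument that birational maps between terminal minimal models are isomorphisms in codimension one (cf.\ \cite[Corollary 3.54]{MR1658959}), transplanting it to the b-setting via the b'-discrepancy formalism and the negativity lemma.

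Choose a common resolution $p \colon W \to X$, $p' \colon W \to X'$ of $\varphi$ as in \pref{fg:common_resolution}, and set
\[
F := p^*(K_X+\bfD_X) - (p')^*(K_{X'}+\bfD_{X'}).
\]
First I would check that $F$ is $p'$-nef and $-F$ is $p$-nef. For a curve $C \subset W$ contracted by $p'$, we have $(p')^*(K_{X'}+\bfD_{X'})\cdot C=0$, while $p_*C$ is either zero or a curve lying over a point of $U$; since $K_X+\bfD_X$ is $\pi$-nef by minimality, this gives $p^*(K_X+\bfD_X)\cdot C\geq 0$ and hence $F\cdot C\geq 0$. The assertion for $-F$ is symmetric.

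Second, I would compute the pushforwards $p_*F$ and $p'_*(-F)$ using the b'-discrepancy identity of \pref{df:discrepancy_of_b-log_pair}. Writing $K_W+\bfD_W = p^*(K_X+\bfD_X)+\sum_E b'(E;X,\bfD)E$ and the analogous identity for $p'$, and using that for a prime divisor $E$ on $W$ the number $b'(E;X,\bfD)$ vanishes precisely when $E$ is non-exceptional over $X$, a direct calculation gives
\[
p_*F = \sum_{A} b'(\widetilde A;X',\bfD)\,A, \qquad p'_*(-F)=\sum_{A'} b'(\widetilde{A'};X,\bfD)\,A',
\]
where $A$ (resp.\ $A'$) ranges over the prime divisors on $X$ (resp.\ $X'$) contracted by $\varphi$ (resp.\ $\varphi^{-1}$), and $\widetilde{(\,\cdot\,)}$ denotes the proper transform on $W$. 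Since $(X,\bfD)$ and $(X',\bfD)$ are b-terminal, and b-terminality coincides with b'-terminality by the remark following \pref{cr:b-dlt_and_resolution}, and since the proper transforms appearing above are exceptional over $X'$ and $X$ respectively, all coefficients are strictly positive and both pushforwards are effective.

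Applying the negativity lemma \cite[Lemma 3.39]{MR1658959} in each of the two directions then forces $F\geq 0$ and $-F\geq 0$, hence $F=0$. But if some prime divisor $A\subset X$ were contracted by $\varphi$, its proper transform $\widetilde A$ on $W$ would be non-exceptional over $X$ (giving $b'(\widetilde A;X,\bfD)=0$) and exceptional over $X'$ (giving $b'(\widetilde A;X',\bfD)>0$), so the coefficient of $\widetilde A$ in $F$ would be strictly positive, contradicting $F=0$. The symmetric argument excludes $\varphi^{-1}$-contracted divisors on $X'$. The main subtlety is the bookkeeping of which prime divisors on $W$ are exceptional over $X$ versus over $X'$: it is exactly this combinatorial input, together with b-terminality on both sides, that converts the negativity lemma into the desired strict positivity and gives the contradiction.
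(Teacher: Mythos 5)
Your argument is correct, and it does differ in execution from the one in the paper. Both proofs hinge on the same three inputs -- $\pi$-nefness of $K+\bfD$ on both sides, the fact that $K_W+\bfD_W-p^*(K_X+\bfD_X)$ is $p$-exceptional with strictly positive coefficients on exceptional components (b$'$-terminality), and a negativity argument on the common resolution -- but they package the negativity step differently. The paper follows Koll\'ar's argument from \cite[p.~420]{MR2426353}: it splits $E$ and $E'$ by subtracting $F=\min(E,E')$, so that $\Ebar$ and $\Ebar'$ have disjoint supports, and then invokes its own \pref{lm:HIT} (a Hodge-index/general-complete-intersection statement, essentially the proof of the negativity lemma) to produce a single $p$-contracted curve $C$ with $\Ebar\cdot C<0$ and $C\not\subset\Supp\Ebar'$, giving one contradictory intersection number. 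You instead run the two-sided negativity argument of \cite[Lemma 3.39, proof of Theorem 3.52]{MR1658959} on $F=p^*(K_X+\bfD_X)-(p')^*(K_{X'}+\bfD_{X'})=E'-E$, which yields the stronger intermediate conclusion $E=E'$ (equality of all discrepancies computed on $W$) before reading off the contradiction from the coefficient of the proper transform of a hypothetically contracted divisor. Your route buys a cleaner citation to a packaged lemma and the extra information $E=E'$; the paper's route is self-contained (it proves \pref{lm:HIT} from scratch) at the cost of the $\min$-bookkeeping needed to keep the curve off $\Supp\Ebar'$. One small point of hygiene in your write-up: $b'(E;X,\bfD)$ is only defined in \pref{df:discrepancy_of_b-log_pair} for exceptional divisors, so the clause ``vanishes precisely when $E$ is non-exceptional'' should really be phrased as the statement that $K_W+\bfD_W-p^*(K_X+\bfD_X)$ is supported on the $p$-exceptional locus (which follows from $p_*\bfD_W=\bfD_X$) together with b$'$-terminality for the strict positivity on exceptional components; with that rephrasing the bookkeeping you describe is exactly right.
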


\begin{proof}
The proof below is a slight modification of the one in \cite[p. 420]{MR2426353}, but we give more details
for the convenience of he readers.
Take a common resolution of singularities
$
 W
$
as in \pref{fg:common_resolution}.
By the symmetry, it is enough to show that any $p$-exceptional divisor is also $p'$-exceptional.
Consider the canonical bundle formula
\begin{equation}\label{eq:the_canonical_bundle_formula}
 K _{ W } + \bfD _{ W }
 =
 p ^{ * } ( K _{ X } + \bfD _{ X } ) + E
 =
 ( p' ) ^{ * } ( K _{ X' } + \bfD _{ X' } ) + E'.
\end{equation}
Set
\begin{align}
 F = \min ( E, E ' )
\end{align}
and
\begin{align}
  E = \Ebar + F,\\
 E ' = \Ebar ' + F.
\end{align}
The assumption is equivalent to
$
 \Ebar \neq 0
$, since
$
 ( X, \bfD )
$
is b-terminal.
By \pref{lm:HIT} below, one can find an irreducible curve
$
 C
$
such that
$
 C \not\subset \Supp \Ebar '
$,
$
 ( \Ebar . C ) < 0
$,
and
$
 p ( C ) = \mbox{point}
$.
This clearly contradicts the equality
\eqref{eq:the_canonical_bundle_formula}, since
\begin{equation}
 0 > \lb  p ^{ * } ( K _{ X } + \bfD _{ X } ) + \Ebar ) \rb . C
 =
 \lb  ( p' ) ^{ * } ( K _{ X' } + \bfD _{ X' } ) + \Ebar ' \rb . C
 \ge 0.
\end{equation}
\end{proof}

\begin{lemma}\label{lm:HIT}
Let
$
 p \colon W \to X
$
be a birational projective morphism of normal varieties over a field of characteristic zero.
Assume that $W$ is smooth, and let
$
 \Ebar
$
be a non-trivial effective $p$-exceptional $\bQ$-divisor and
$
 \Ebar '
$
be an effective divisor on $W$ none of whose component is contracted by $p$.
Then there exists an irreducible projective curve $C \subset W$ contracted to a point by $p$,
$
 \Ebar . C < 0,
$
and
$
 C \not\subset \Supp \Ebar '
$.
\end{lemma}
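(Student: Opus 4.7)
The plan is to apply the classical negativity lemma for birational morphisms, augmented with a general-position argument to ensure the resulting curve avoids $\Supp \Ebar'$. The key input is the Hodge Index Theorem on smooth surfaces.

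\emph{Base case $\dim W = 2$.} Every component of $\Supp \Ebar$ is a $p$-exceptional curve on the smooth surface $W$. By Mumford's theorem on contracted curves (a consequence of the Hodge Index Theorem), the intersection form on $p$-exceptional curves contracted to any fixed point of $X$ is negative definite. Writing $\Ebar = \sum_i a_i F_i$ with $a_i > 0$ and decomposing according to the image point $p(F_i)$ (using that components over distinct points are disjoint), one obtains $\Ebar^2 < 0$, whence $\Ebar \cdot F_{i_0} < 0$ for some $F_{i_0}$. Take $C := F_{i_0}$. Since $C$ is irreducible and $p$-exceptional while by hypothesis no component of $\Ebar'$ is $p$-exceptional, $C$ is not a component of $\Ebar'$, so $C \not\subset \Supp \Ebar'$.

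\emph{Reduction for $n := \dim W \ge 3$.} Fix an irreducible component $F \subset \Supp \Ebar$. Since $F$ is $p$-exceptional, $\dim p(F) \le n-2$, and for very general $z \in p(F)$ the fiber $F \cap p^{-1}(z)$ has positive dimension. I would choose $n-2$ sufficiently general members of a very ample linear system on $W$ whose common intersection $S$ (i) is a smooth irreducible surface by Bertini, (ii) passes through a curve contained in $F \cap p^{-1}(z)$ for some very general $z \in p(F)$, and (iii) meets every component of $\Ebar$ and $\Ebar'$ transversely. Since $p|_S$ is generically finite, its Stein factorization yields a birational morphism $q \colon S \to T$ of normal surfaces with $S$ smooth. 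The restriction $\Ebar|_S$ is an effective $q$-exceptional $\bQ$-divisor, and by transversality no component of $\Ebar'|_S$ is $q$-exceptional: a non-$p$-exceptional component $G$ of $\Ebar'$ satisfies $\dim p(G) = n-1$, and so $q(G \cap S)$ has dimension $1$. Applying the base case to $(S, \Ebar|_S, \Ebar'|_S, q)$ produces an irreducible curve $C \subset S$ contracted by $q$, hence by $p$, with $\Ebar|_S \cdot_S C < 0$ and $C \not\subset \Supp \Ebar'|_S$. The projection formula then gives $\Ebar \cdot_W C = \Ebar|_S \cdot_S C < 0$.

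The main obstacle is constructing the surface section $S$ to satisfy (i)--(iii) simultaneously in such a way that the Stein factorization of $p|_S$ has the correct exceptional structure: $S$ must pass through a positive-dimensional fiber of $p$ (so that $\Ebar|_S$ remains non-trivially $q$-exceptional) while cutting every non-exceptional divisor transversely. Essentially this reduction is the one carried out in the proof of the negativity lemma \cite[Lemma 3.39]{MR1658959}, from which a dimension-free version of the inequality (without the avoidance condition on $\Ebar'$) can also be extracted directly.
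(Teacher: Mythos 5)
Your base case is fine, but the dimension reduction has a genuine gap, and it is exactly the point you flag as ``the main obstacle'': conditions (i) and (ii) on your surface $S$ are incompatible. If $F$ is a component of $\Ebar$ whose image has codimension $2$ in $X$ (the critical case), the fibres of $F \to p(F)$ are curves, and $n-2$ \emph{general} members of a very ample system on $W$ meet each such curve in finitely many points; so for a genuinely general complete intersection $S\subset W$ the map $p|_S$ is finite and $\Ebar|_S$ has \emph{no} $q$-exceptional part at all, and the base case gives nothing. Forcing all $n-2$ hypersurfaces through a chosen fibre curve is a positive-codimension condition in the linear system, which breaks the Bertini/transversality requirements; and even if one arranges such an $S$, the restriction $\Ebar|_S$ splits as (exceptional part) $+$ (effective non-exceptional part), and Mumford's negative definiteness only gives a curve $C$ with (exceptional part)$\cdot C<0$, while the non-exceptional part contributes $\ge 0$, so $\Ebar|_S\cdot C<0$ does not follow. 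Your appeal to the projection formula is fine once one has the right surface, but the surface you describe does not exist generically.

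The fix, which is what the paper's proof (following Koll\'ar--Mori, 3.39) does, is to cut on $X$ rather than on $W$ at the last step. First one cuts by $k-2$ general hypersurfaces \emph{on $W$} only to reduce to the case where the minimal codimension of $p(\Gamma)$ over the components $\Gamma$ of $\Ebar$ equals $2$ (this step does preserve exceptionality because those fibres have dimension $\ge k-1$). Then one takes a general normal complete intersection surface $S\subset X$ and sets $T=p^{-1}(S)$: since $T$ is a full preimage it contains entire fibres of $p$ over the finitely many points of $S\cap p(\Gamma)$, so $N=T\cap\Ebar$ is a nontrivial effective divisor that is purely $p|_T$-exceptional (the components with image of codimension $\ge 3$ miss $T$ for general $S$), and $p|_T$ is an isomorphism near $T\cap\Supp\Ebar'$, which gives the avoidance condition. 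Only then does the Hodge index argument on $T$ close the proof. You should rework your reduction along these lines; as written, the induction does not go through.
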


\begin{proof}
The proof below is taken from \cite[Proof of 3.39]{MR1658959}.
Consider the decomposition
\begin{align}
 \Ebar = \sum _{ i = 2 } ^{ \dim X } \Ebar _{ i },
\end{align}
where
$
 \Ebar _{ i }
$
is the sum of the components $\Gamma \subset \Ebar$ such that the codimension of
$
 p \lb \Gamma \rb
$
is $i$. Let $k \ge 2$ be the minimum integer such that
$
 \Ebar _{ k } \neq 0
$.
Take a general complete intersection
$
 H ^{ k - 2 }
$
of codimension
$
 k - 2
$
\emph{on $W$}. Set
$
 \Gbar \coloneqq \Ebar | _{ H ^{ k - 2 } }
$.
Then by the genericity we may assume that
$
 \Gbar _{ i } = \Ebar _{ i + k - 2 } | _{ H ^{ k - 2 } }
$
for all
$
 i \ge 2
$
and that no irreducible component of
$
 \Ebar ' \cap H ^{ k - 2 }
$
is contracted by the morphism
$
 p | _{ H ^{ k - 2 } }
$.
We may also assume that if we let
$
 \Hbar ^{ k - 2 }
$
be the normalization of
$
 p \lb H ^{ k - 2 } \rb
$,
then the morphism
$
 p | _{ H ^{ k - 2 } } \colon H ^{ k - 2 } \to \Hbar ^{ k - 2 }
$
is projective and birational.
Note that if one can find an irreducible projective curve
$
 C \subset H ^{ k - 2 }
$
which is contracted by
$
 p | _{ H ^{ k - 2 } }
$,
$
 \Gbar . C < 0
$,
and
$
 C \not \subset H ^{ k - 2 } \cap \Ebar '
$,
then as a curve on $W$ it has the required properties as well. Hence we can assume that $k = 2$.

If $k = 2$ take a general complete intersection $S \subset X$ of dimension 2 which is normal
\cite[Theorem 7]{MR0037548}, $T = p ^{ - 1 } \lb S \rb \subset Y$ is smooth, and
$
 T \cap \Ebar _{ 2 } = T \cap \Ebar
$.
We may moreover assume that
$
 p | _{ T } \colon T \to S
$
is an isomorphism on an open neighbourhood of
$
 T \cap \Supp \Ebar '
$,
since the image of the exceptional locus of
$
 \Supp \Ebar '
$
under the morphism
$
 p
$
has codimension at least three. Hence it follows that
$
 N \coloneqq T \cap \Ebar _{ 2 }
$
is a non-trivial effective
$
 p | _{ T }
$-exceptional divisor none of whose irreducible component is contained in $\Supp \Ebar '$.
By the Hodge index theorem
$
 N ^{ 2 } < 0
$. Since $N$ is an effective divisor, there is at least one component
$
 C \subset N
$
such that
$
 N . C < 0
$.
It is now obvious that the curve $C$, seen as a curve on $W$, has all the required properties.
\end{proof}

We next look at some results that hold specifically for surfaces.  A b-terminal pair $(S,\bfD)$, where $S$ is a surface will have $(S,\bfD_S)$ log terminal, so we know that $S$ has quotient singularities.

\begin{corollary}
Let
$
 ( S, \bfD )
$
be a b-terminal pair of dimension $2$ with non-negative Kodaira dimension.
Then it admits a unique minimal model.
\end{corollary}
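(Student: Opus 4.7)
The plan is to combine the existence part of the b-log MMP in low dimension with the special feature of surfaces that small birational maps between normal projective surfaces are automatically isomorphisms.

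First I would address existence. Since $(S,\bfD)$ is b-terminal it is in particular b-canonical, and by \pref{cr:b-lt_b-lc_vs_a-lt_a-lc} the underlying pair $(S,\bfD_S)$ is klt. The assumption that the Kodaira dimension of $K_S+\bfD_S$ is non-negative says in particular that $K_S+\bfD_S$ is pseudo-effective, so that the b-log MMP cannot terminate at a Mori fibre space. Since $\dim S = 2 \le 3$, \pref{MMPworks} then produces a b-log minimal model $(S',\bfD)$, and by \pref{MMP_preserves_bterminal_etc} each step of the b-log MMP preserves b-terminality, so the minimal model $(S',\bfD)$ is itself b-terminal.

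Next I would deduce uniqueness. Suppose $(S',\bfD)$ and $(S'',\bfD)$ are two b-terminal minimal models of $(S,\bfD)$. The composition of their birational structure maps yields a birational map $\varphi\colon S'\dasharrow S''$ over $\Spec \bk$, which by \pref{pr:b-minimal_models_are_small_birational} is an isomorphism in codimension one. The surface-specific input I would invoke is that a proper birational morphism from a smooth surface to a normal surface has exceptional locus of pure codimension one. Concretely, take a common resolution $p\colon W\to S'$ and $p'\colon W\to S''$ as in \pref{fg:common_resolution}; a divisor $E\subset W$ is $p$-exceptional if and only if it is $p'$-exceptional, since otherwise its image on one side would be a divisor while on the other a subvariety of codimension two, contradicting the fact that $\varphi$ is an isomorphism in codimension one. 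Hence $p$ and $p'$ contract exactly the same set of curves on $W$, and Zariski's Main Theorem (applied to the morphism factoring $W\to S''$ through $S'$) promotes $\varphi$ to an isomorphism $S'\simto S''$.

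The main obstacle is essentially cosmetic rather than technical: all of the substantive MMP ingredients are already in place earlier in the paper, and the only genuine argument is the two-dimensional rigidity statement that a small birational map between normal projective surfaces is an isomorphism, which follows from the pure codimension one nature of exceptional loci on surfaces.
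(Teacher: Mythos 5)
Your proposal is correct and follows essentially the same route as the paper: existence via the MMP results already established, and uniqueness by combining \pref{pr:b-minimal_models_are_small_birational} with the surface-specific lemma that a birational map between normal surfaces which is an isomorphism in codimension one is an isomorphism, proved via a common resolution and Zariski's main theorem applied to the graph. The only cosmetic difference is that the paper applies Zariski's main theorem to the two projections from the image $\Gamma$ of $p\times p'$ in $S'\times S''$ (each being finite and birational) rather than to a factorization of $W\to S''$ through $S'$, but the substance is identical.
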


\begin{proof}
The existence of a minimal model is already settled.
The uniqueness follows from the previous proposition and the following
well-known lemma.
\end{proof}

\begin{lemma}
  If a birational map $ \varphi \colon S \dasharrow S' $ between normal surfaces is
  an isomorphism in codimension $1$ \sch{}{on both $S$ and $S'$}, then it is an
  isomorphism.
\end{lemma}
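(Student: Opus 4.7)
The plan is to factor $\varphi$ through the normalization of the closure of its graph and show that both projections are isomorphisms. Let $\Gamma \subset S \times S'$ denote the closure of the graph of $\varphi$ (we are implicitly in a projective setting inherited from the ambient MMP context, so $\Gamma$ is well-defined and proper over $S$), and let $\nu \colon \tilde\Gamma \to \Gamma$ be the normalization. The two projections then give proper birational morphisms $p \colon \tilde\Gamma \to S$ and $p' \colon \tilde\Gamma \to S'$. It suffices to show that $p$ (and, by symmetry, also $p'$) is an isomorphism, since then $\varphi = p' \circ p^{-1}$ is as well.

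The first step is the observation that a prime divisor $E \subset \tilde\Gamma$ is $p$-exceptional if and only if it is $p'$-exceptional. Indeed, if $E$ is $p$-exceptional, then $p(E)$ is a finite set of points on the surface $S$; were $E$ not $p'$-exceptional, then $p'(E)$ would be a divisor on $S'$ that $\varphi^{-1}$ contracts to a point of $S$, contradicting the assumption that $\varphi^{-1}$ is an isomorphism in codimension $1$. The reverse direction is symmetric.

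I now claim that no such exceptional $E$ exists. If $E$ were both $p$- and $p'$-exceptional, then $p(E) = \{x\}$ and $p'(E) = \{y\}$ would both be single points, so the image of $E$ inside $S \times S'$ under $\tilde\Gamma \xrightarrow{\nu} \Gamma \hookrightarrow S \times S'$ would be the single point $(x,y)$. But $\nu$ is finite and so cannot contract the positive-dimensional $E$ to a point, a contradiction. Hence $p$ has no exceptional divisors, so it is quasi-finite; being also proper it is finite; and being birational with normal target $S$, Zariski's Main Theorem forces $p$ to be an isomorphism. The symmetric argument yields the same for $p'$, completing the proof.

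The main conceptual point is the dimension count in the last step: a divisor on $\tilde\Gamma$ contracted by both projections would sit inside a single point of $S \times S'$, which is only a contradiction because $\nu$ is \emph{finite}. This is exactly why the argument requires the normalization of the graph rather than an arbitrary smooth resolution — a smooth resolution would itself introduce exceptional divisors collapsed to points, invalidating the count. The only other subtlety is that properness of $S$ and $S'$ is used implicitly (both to form $\Gamma$ as a closed subvariety of $S \times S'$ and to promote quasi-finiteness to finiteness), but this is always available in the MMP setting in which the lemma is applied.
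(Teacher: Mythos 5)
Your proof is correct and follows essentially the same route as the paper: both arguments reduce to the graph (closure) in $S \times S'$, observe that the codimension-one hypothesis forces any curve contracted by one projection to be contracted by the other, note that no curve can be contracted by both because the model is finite over $S \times S'$, and conclude via Zariski's Main Theorem. The only cosmetic difference is that the paper reaches the graph as the image of a common resolution $W$ rather than by normalizing the graph closure directly.
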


\begin{proof}
Consider a common resolution
$
 p, p' \colon W \to S, S'
$
satisfying
$
 p' = \varphi \circ p 
$.
By the assumption, an irreducible curve
$
 C \subset W
$
is contracted to a point by
$
 p
$
if and only if it is contracted to a point by
$
 p '
$.
Therefore, if we consider the image
$
 \Gamma
$
of the morphism
$
 p \times p' \colon W \to S \times S'
$,
 the natural projections
$
 \Gamma \to S
$
and
$
 \Gamma \to S '
$
are birational and finite, hence isomorphisms by the Zariski's main theorem
\cite[Chapter III, Corollary 11.4]{Hartshorne}.
Thus
$
 \varphi
$
extends to \sch{the}{an} isomorphism whose graph is
$
 \Gamma
$.
\end{proof}

\begin{theorem}\label{th:flop_connect_b-minimal_models}
  Under the assumptions of \pref{pr:b-minimal_models_are_small_birational}, the
  birational map $ \varphi $ \sch{is}{can be} decomposed into a sequence of flops.
\end{theorem}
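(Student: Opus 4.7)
The plan is to reduce the claim to the classical theorem of Kawamata stating that two $\bQ$-factorial klt minimal models of a common \logpair over $U$, related by a birational map that is an isomorphism in codimension one, are connected by a finite sequence of log flops; in the present generality this is a standard consequence of BCHM~\cite{MR2601039}. First I would observe that by~\pref{cr:b-lt_b-lc_vs_a-lt_a-lc} the trace pairs $(X,\bfD_X)$ and $(X',\bfD_{X'})$ are klt (since b-terminal pairs are in particular b-lt and fractional, their traces are klt); they are $\bQ$-factorial by the very definition of (b-log) minimal models adopted in this paper, and both $K_X+\bfD_X$ and $K_{X'}+\bfD_{X'}$ are $\pi$-nef. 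By~\pref{pr:b-minimal_models_are_small_birational}, $\varphi$ is an isomorphism in codimension one, so $\varphi_*\bfD_X = \bfD_{X'}$. Thus $(X,\bfD_X)$ and $(X',\bfD_{X'})$ satisfy the hypotheses of the classical theorem, giving a decomposition of $\varphi$ into log flops of the trace pair; since a flip (and analogously a flop) in the b-log MMP is defined as one for the underlying \logpair, this is simultaneously a decomposition into flops of the b-\logpair $(X_i,\bfD)$.

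To execute the classical argument concretely, I would pick a very general effective ample $\bQ$-divisor $A'$ on $X'$ over $U$ and set $A := \varphi^{-1}_*A'$ on $X$. For sufficiently small rational $\epsilon > 0$, the \logpair $(X,\bfD_X+\epsilon A)$ is klt, while $K_{X'}+\bfD_{X'}+\epsilon A'$ is ample over $U$ as the sum of a nef and an ample class; hence $(X',\bfD_{X'}+\epsilon A')$ is the unique log canonical model of $(X,\bfD_X+\epsilon A)$ over $U$. BCHM then produces a $(K_X+\bfD_X+\epsilon A)$-MMP with scaling of $A$ over $U$ terminating at $X'$,
\[
X = X_0 \dasharrow X_1 \dasharrow \cdots \dasharrow X_n = X'.
\]
Every step will be small, since $\varphi$ is iso in codimension one and each partial composition $X_i \dasharrow X'$ inherits this property, so no divisorial contraction can occur.

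The hard part will be verifying that each small step is a genuine flop for $K+\bfD$ rather than merely a flip for $K+\bfD+\epsilon A$. The scaling formalism yields, at step $i$, an extremal ray $R_i$ with $(K_{X_i}+\bfD_{X_i}+\lambda_i A_i)\cdot R_i = 0$ at the critical scaling parameter $\lambda_i$. I would then prove inductively along the sequence that $K_{X_i}+\bfD_{X_i}$ remains $\pi$-nef (it starts nef on $X_0=X$, and nefness is preserved across each step once that step is shown to be a flop), and combine this with the displayed vanishing to force $(K_{X_i}+\bfD_{X_i})\cdot R_i = 0$, so that the step is a flop with respect to $(X_i,\bfD_{X_i})$, equivalently a flop for $(X_i,\bfD)$. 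This inductive bookkeeping of the scaling threshold against the nef threshold is the classical technical core of Kawamata's theorem and the main obstacle in the argument.
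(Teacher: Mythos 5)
Your proposal is correct and takes essentially the same route as the paper: the paper's proof simply observes that \pref{pr:b-minimal_models_are_small_birational} supplies the only modification needed to apply Kawamata's theorem that $\bQ$-factorial klt minimal models isomorphic in codimension one are connected by flops \cite[Theorem 1]{MR2426353}, which is exactly the reduction you make (your final two paragraphs just spell out the internals of Kawamata's MMP-with-scaling argument rather than citing it).
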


\begin{proof}
\pref{pr:b-minimal_models_are_small_birational} gives the only required modification of the proof of \cite[Theorem 1]{MR2426353}.
\end{proof}


\section{Toroidal b-\logpairs}\label{sec:toroidal_blog_pairs}

In this section we will discuss discrepancy and b-terminalizations for toroidal
b-\logpairs.  We will repeat some earlier results, but we include new proofs using
toroidal methods, since they are more explicit and constructive.  \Logsmooth
\sch{varieties}{pairs} are toroidal and toroidal varieties are naturally stratified,
so we begin by \sch{showing}{proving} some results concerning discrepancy for
\logsmooth pairs.

\subsection{\Logsmooth stratifications}
Given \alogsmooth log canonical pair $(X, D)$, we obtain a stratification of $X$
defined as follows: when the support of $D$ is given by $\bigcup D_i$ for irreducible
divisors $D_i$ a stratum is defined to be a irreducible component of an intersection
of some of the divisors $D_i$.  When one blows up a stratum $Z\subset X$ to obtain
$ f \colon Y = \Bl _{ Z } X \to X $, we define $D_Y$ by the equality of
$ \bQ $-divisors
\begin{align}
K_Y + D_Y
=
f ^{ * } ( K_X + D ).
\end{align}
Since
$
 ( Y, D _{ Y } )
$
is again \alogsmooth pair, we obtain a stratification of $Y$.
We repeat this process to define the boundary divisor for any model over
$X$
obtained by repeatedly blowing up strata.
\begin{proposition}\label{pr:discrepancy_of_other_divisors}
Let $(X, D)$ be \alogsmooth klt pair and
$E$ an exceptional divisor over
$X$ which cannot be obtained by
repeatedly blowing up the strata. Then
\begin{align}
a(E; X, D) > 0.
\end{align}
\end{proposition}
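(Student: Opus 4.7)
By Hironaka's resolution of singularities there is a sequence of smooth blowups $X = Y_0 \leftarrow Y_1 \leftarrow \cdots \leftarrow Y_m = Y$ with centres in snc position with the current boundary, on which $E$ appears as a prime divisor; the plan is to induct on the minimum such length $m = N(E)$. For the base case $m = 1$, $E$ is the exceptional divisor of a single blowup of a smooth centre $Z \subset X$ which, by hypothesis, is not a stratum of $(X, D)$. Setting $I_Z = \{i : Z \subset D_i\}$ and $c = \operatorname{codim}(Z)$, the non-stratum condition forces $Z$ to be strictly contained in $\bigcap_{i \in I_Z} D_i$, hence $c \geq |I_Z|+1$. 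The standard discrepancy formula then yields
\begin{equation*}
 a(E;X,D) = c - 1 - \sum_{i\in I_Z} d_i \;\geq\; |I_Z| - \sum_{i\in I_Z} d_i \;=\; \sum_{i\in I_Z}(1-d_i) \;>\; 0,
\end{equation*}
using $d_i < 1$ from klt (the case $I_Z = \emptyset$ being trivial since then $c \geq 2$).

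For the inductive step, let $Y_1 \to X$ be the first blowup, with exceptional divisor $F_1$ and smooth centre $Z_0$. If $Y_1 \to X$ is a stratum blowup of $(X, D)$, then $(Y_1, D_{Y_1})$ remains snc klt, and $E$ cannot be obtainable by iterated stratum blowups of $(Y_1, D_{Y_1})$ (else composing with $Y_1 \to X$ would contradict the hypothesis on $E$), so the induction hypothesis closes the case. Otherwise $Z_0$ is not a stratum; the base case applied to $F_1$ gives $a(F_1; X, D) > 0$, and equivalently the coefficient $\delta_{F_1}$ of $F_1$ in $D_{Y_1}$ is strictly negative. Again, if $E$ is not obtainable by iterated stratum blowups of $(Y_1, D_{Y_1})$, induction concludes. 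Otherwise $E$ is toroidal for the local snc structure of $(Y_1, D_{Y_1})$, and a local toric calculation yields
\begin{equation*}
 a(E; Y_1, D_{Y_1}) + 1 \;=\; \sum_\alpha (1-\delta_\alpha)\, \nu_E(D_\alpha),
\end{equation*}
summed over the boundary components of $D_{Y_1}$ passing through $C_{Y_1} E$. I claim $\nu_E(F_1) \geq 1$: otherwise $C_{Y_1} E \subset Y_1 \setminus F_1 \cong X \setminus Z_0$, and the extraction of $E$ from $(Y_1, D_{Y_1})$ by iterated stratum blowups transports, via the isomorphism, to an iterated stratum extraction of $E$ from $(X, D)$, contradicting the hypothesis. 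Since $1 - \delta_{F_1} > 1$ and the remaining summands are non-negative (each $\delta_\alpha < 1$ by klt preservation under stratum blowups), the display yields $a(E; Y_1, D_{Y_1}) + 1 > 1$, whence $a(E; X, D) = a(E; Y_1, D_{Y_1}) > 0$.

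The main obstacle is establishing the toric log-discrepancy formula above for toroidal divisors over an snc pair with possibly negative boundary coefficients; this reduces to a local toric calculation, using that toroidal valuations extract only rays in the cone generated by the local boundary (star subdivisions of boundary cones), and that stratum blowups preserve the snc structure. A secondary point is the transport argument forcing $\nu_E(F_1) \geq 1$, which relies only on the isomorphism $Y_1 \setminus F_1 \cong X \setminus Z_0$ and the local nature of the notion \emph{obtainable by iterated stratum blowups}; stratum blowups of $(Y_1, D_{Y_1})$ whose centres meet $F_1$ either have no effect on the region where $\nu_E$ is centred or introduce divisors not seen by $\nu_E$, hence may be omitted.
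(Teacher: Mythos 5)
Your proof is correct in outline, and its base case coincides with the paper's: the paper likewise realizes $E$ by a chain of blowups of its centre, inducts on the length $t$, and for $t=1$ computes $a(E;X,D)=c-1-\sum_{i\le d}a_i$ with $c>d$ forced by the non-stratum hypothesis. The inductive step, however, takes a genuinely different route. The paper splits on whether $C_XE$ is a stratum: if it is, it passes to the crepant pullback exactly as in your case (i); if it is not, it keeps the \emph{strict} transform of $D$ on $X_1=\Bl_{C_XE}X$ and invokes a transitivity inequality for discrepancies (Lemma~\ref{lm:transitivity_of_discrepancy}), namely $a(E;X,D)\ge a(E;X_1,f^{-1}_*D)+a(E_1;X,D)$, so that positivity follows from the base case plus induction with no toroidal input at all. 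You instead carry the crepant pullback throughout and isolate the residual case where $E$ becomes extractable by stratum blowups of $(Y_1,D_{Y_1})$; there you use the monomial log-discrepancy formula $a(E)+1=\sum_\alpha(1-\delta_\alpha)\nu_E(D_\alpha)$ together with $\nu_E(F_1)\ge1$ and $\delta_{F_1}<0$. This is sound and even quantitatively sharper (in that case it yields $a(E;X,D)>a(F_1;X,D)$), but it front-loads two facts the paper only develops later in Section~\ref{sec:toroidal_blog_pairs}: the identification of divisors obtained by iterated stratum blowups with toroidal/monomial valuations for the local \logsmooth structure (Corollary~\ref{cor:toroidal_blow_up_strata}) --- which is also what legitimizes your transport argument for $\nu_E(F_1)\ge1$, since ``monomial at the generic point of the centre'' is visibly local whereas ``obtainable by stratum blowups'' is not obviously so --- and the toric discrepancy formula of Proposition~\ref{prop:toric_descrpancy_formula}. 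Both are standard and you flag them honestly, so this is a matter of exposition rather than a gap. One shared wrinkle worth making explicit: in your cases (i) and (ii) the crepant boundary $D_{Y_1}$ can acquire negative coefficients, so the statement you are actually inducting on is the sub-klt version of the proposition (\logsmooth with all coefficients $<1$); the base case only uses $d_i<1$, so this is harmless, and the paper's own stratum case has exactly the same implicit extension.
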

The following lemma will be used in the proof of
\pref{pr:discrepancy_of_other_divisors}.
\begin{lemma}\label{lm:transitivity_of_discrepancy}
Let
$f \colon Y \to X$ and
$g \colon Z \to Y$
be birational morphisms between
normal projective varieties.
Let
$D$
be a $\bQ$-divisor on
$X$
such that
$K_X+D$
and
$K_Y+D_Y$,
where
$D_Y:=f^{-1}_*D$,
are both
$\bQ$-Cartier.
Let
$
 E \subset Y
$
be an
$f$-exceptional divisor, and
$F\subset Z$ be a
$g$-exceptional divisor
which satisfies
$
 C _{ Y } F
 \subset E
$.
Assume that for any
$f$-exceptional divisor
$
 E' \subset Y
$
other than $E$ we have
$
 a ( E'; X, D )
 \ge 0
$.
Then
\begin{align}\label{eq:transitivity_of_discrepancy}
a ( F; X, D )
\ge
a ( F; Y, D _{ Y } )
+ a ( E; X, D ).
\end{align}
\end{lemma}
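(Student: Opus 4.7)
The plan is to establish a chain rule for discrepancies along $Z \xrightarrow{g} Y \xrightarrow{f} X$ and then deduce the inequality by a sign analysis that exploits the inclusion $C_Y F \subset E$ together with the positivity hypothesis on the other $f$-exceptional discrepancies.

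First I would combine the defining relations
\[
K_Y+D_Y = f^{*}(K_X+D) + \sum_{E'} a(E';X,D)\, E',
\qquad
K_Z+D_Z = g^{*}(K_Y+D_Y) + \sum_{F'} a(F';Y,D_Y)\, F',
\]
where the sums range over $f$-exceptional and $g$-exceptional prime divisors respectively. Substituting the first into $g^{*}(K_Y+D_Y)$ and comparing the resulting expression for $K_Z+D_Z$ with the analogous discrepancy formula for the composition $f\circ g$, the coefficient of the $g$-exceptional divisor $F$ yields the chain rule
\[
a(F;X,D) = a(F;Y,D_Y) + \sum_{E'} a(E';X,D)\, m_{F,E'},
\]
where $m_{F,E'} := \operatorname{mult}_F(g^{*}E')$ is the value of the divisorial valuation $\nu_F$ on a local equation of $E'$.

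Next, isolating the $E'=E$ summand, the desired inequality becomes
\[
a(E;X,D)\,(m_{F,E}-1) + \sum_{E'\ne E} a(E';X,D)\, m_{F,E'} \ge 0.
\]
The key geometric observation is that $C_Y F \subset E$ forces $\nu_F$ to take a positive value on any local equation of $E$, so $m_{F,E}\ge 1$, making $m_{F,E}-1\ge 0$ and the first summand non-negative (together with the sign of $a(E;X,D)$). For each $E'\ne E$, the multiplicity $m_{F,E'}\ge 0$ is automatic and the hypothesis gives $a(E';X,D)\ge 0$, so the remaining sum is likewise non-negative. Combining the two pieces gives the claim.

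The heart of the argument is the chain rule identity in the first step; everything else is a sign analysis. The main technical subtlety is that individual $f$-exceptional prime divisors $E'$ need not be $\bQ$-Cartier on $Y$, so $m_{F,E'}$ must be defined through the valuation $\nu_F$ (or as part of the $\bQ$-Cartier total sum $\sum a(E';X,D)\,E'$, which is guaranteed by the hypothesis); once this is set up, the estimate $m_{F,E}\ge 1$ is geometrically transparent from $C_Y F\subset E$.
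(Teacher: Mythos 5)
Your proposal follows essentially the same route as the paper's proof: the paper introduces the divisor $D'$ on $Y$ defined by $K_Y+D'=f^*(K_X+D)$, observes that $a(F;X,D)=a(F;Y,D')$, and computes $a(F;Y,D')-a(F;Y,D_Y)=\sum_{E'}a(E';X,D)\,m_F(E')$, which is exactly your chain rule; the concluding sign analysis ($m_F(E)\ge 1$ because $C_YF\subset E$, and $m_F(E')\ge 0$, $a(E';X,D)\ge 0$ for $E'\ne E$) is also the same, as is your remark that the individual $m_F(E')$ must be read as values of the valuation $\nu_F$ since the $E'$ need not be $\bQ$-Cartier.

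The step you only gesture at --- ``together with the sign of $a(E;X,D)$'' --- is, however, a genuine gap: the hypotheses give $a(E';X,D)\ge 0$ only for $E'\ne E$, and nothing is assumed about the sign of $a(E;X,D)$ itself, so you cannot conclude that $a(E;X,D)\,(m_F(E)-1)\ge 0$. This is not a removable blemish, because the asserted inequality can actually fail when $a(E;X,D)<0$ and $m_F(E)\ge 2$. For instance, take $X=\mathbb{P}^2$ and $D=\tfrac34(L_1+L_2)$ for two lines meeting at a point $p$, let $f$ be the blow-up of $p$ with exceptional curve $E$, so that $a(E;X,D)=-\tfrac12$, and let $F$ be obtained by blowing up a general point of $E$ and then the intersection of the resulting exceptional curve with the strict transform of $E$; then $m_F(E)=2$, $a(F;Y,D_Y)=2$ and $a(F;X,D)=1$, whereas the right-hand side of \eqref{eq:transitivity_of_discrepancy} equals $\tfrac32$. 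You should know that the paper's own proof makes exactly the same unjustified step (it asserts $a(E;X,D)\,m_F(E)\ge a(E;X,D)$, which requires $a(E;X,D)\ge 0$), so you have faithfully reproduced the argument, gap included. The fix is to add the hypothesis $a(E;X,D)\ge 0$, under which both your argument and the paper's close; this costs nothing in the only application, the induction step of Proposition~\ref{pr:discrepancy_of_other_divisors}, where $a(E_1;X,D)>0$ has already been established by the case $t=1$.
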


\begin{proof}
Define the divisor
$D'$
on
$Y$ by the equality
\begin{align}
 K _{ Y } + D'
 =
 f ^{ * } ( K _{ X } + D ).
\end{align}
We see
\begin{align}
 a ( F; X, D ) - a ( F; Y, D _{ Y } )
 = a ( F; Y, D' ) - a ( F; Y, D _{ Y } )\\
 = \sum _{ E' } a ( E'; X, D ) m _{ F } ( E' )
 + a ( E; X, D ) m _{ F } ( E )
 \ge a ( E; X, D ),
\end{align}
concluding the proof.
\end{proof}

\begin{proof}[Proof of \pref{pr:discrepancy_of_other_divisors}]
Any exceptional divisor
$E$ over
$X$ can be realized as
a codimension $1$ regular point
on a variety, by
repeatedly blowing up its centre for finitely many times
(starting with the blow-up of
$C_X E$); see
\cite[2.45]{MR1658959}.
Let
$t$ be the number of necessary blowups.
We prove the statement by induction on $t$.

Suppose $t=1$.
By replacing
$X$
with
$
 X \setminus \Sing ( C_X E )
$,
we may assume
$C_X E$ is smooth.
Set
$
 c = \codim _{ X } C _{ X } E
 $,
 and let $D=\sum a_iD_i$ be the decomposition of $D$ into irreducible divisors.
 Reorder the $D_i$ so that $
 C _{ X } E
 \subset D_i
 \iff
 i \le d
$.  Note that $\mult_{C_XE}D_i = 1$ for $i \leq d$ and since $C_XE$ is not a stratum, we have the inequality
$
 c > d
$.
So we obtain the formula
\begin{align}
 a ( E; X, D )
 =
 c - 1 - \sum _{ i = 1 } ^{ d } a _{ i }.
\end{align}

If $d=0$, we see
$
 a ( E; X, D ) = c - 1 \ge 1
$.
If $d>0$, we see
\begin{align}
c - 1 - \sum _{ i = 1} ^{ d } a _{ i }
=
c - d - 1 + \sum _{ i = 1 } ^{ d } ( 1 - a _{ i } )
\ge 0 + ( 1 - \min\{ a_{ i }\} ) > 0.
\end{align}

Now let us consider the induction step.
Consider the sequence of blowups
which realizes the divisor $E$:
\begin{equation}
 X_t \to X_{t-1} \to \cdots \to X_1 \to X
\end{equation}
Let
$
 E _{ i } \subset X _{ i }
$
be the exceptional divisor of the
$i$-th blowup.

Suppose that
$C_X E$
is not a stratum.
Then by setting
$Z= X_t$
and
$Y=X_1$,
we can apply 
\pref{lm:transitivity_of_discrepancy}
to obtain the inequality
\begin{align}\label{eq:discrepancy_of_non-toric_exceptional_divisor}
 a(E; X, D)\ge a(E;Y, D_Y)+ a(E_1; X, D).
\end{align}

Note that by the case
$t=1$, we know
$
 a ( E _{ 1 }; X, D ) > 0
$.
Moreover, since
$
 D _{ Y }
$,
the strict transform of
$
 D
$ on
$Y$,
is again \sch{SNC}{snc} and
$
 C _{ Y } E
$
is not a stratum, we can apply the induction hypothesis
to see
$
 a ( E; Y, D _{ Y } ) > 0
$.
Thus we obtain the conclusion from
\eqref{eq:discrepancy_of_non-toric_exceptional_divisor}.

Finally, suppose that
$
 C _{ X } E
$
is a stratum. In this case, define the divisor
$
 D'
$
by
$
 K _{ Y } + D'
 =
 f ^{ * } ( K _{ X } + D )
$.
By applying the induction hypothesis to
$
 ( Y, D' )
$, we get
\begin{align}
 a ( E; X, D ) = a ( E; Y, D' ) > 0.
\end{align}
Thus we conclude the proof.
\end{proof}

\begin{theorem}\label{th:positivity_of_Brauer_discrepancy_of_non_toric_divisors}
Let
$( X, \bfD )$
be \alogsmooth fractional b-\logpair.
Let
$E$
be any exceptional divisor over
$X$
with
$b(E;  X, \bfD )\le 0$.
Then
$E$
is obtained by repeatedly
blowing up strata. 
\end{theorem}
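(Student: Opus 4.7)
The plan is to prove the contrapositive: if $E$ is an exceptional divisor over $X$ that is \emph{not} obtainable by repeatedly blowing up strata of the snc pair $(X, \bfD_X)$, then $b(E; X, \bfD) > 0$. The whole argument should reduce immediately to the corresponding statement for the ordinary $a$-discrepancy, namely \pref{pr:discrepancy_of_other_divisors}, via the comparison formula in \pref{rm:comparison_of_discrepancies}.

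First I would observe that the hypotheses on $(X, \bfD)$ feed directly into \pref{pr:discrepancy_of_other_divisors}. Since $(X, \bfD)$ is snc (by definition, this means $(X, \bfD_X)$ is an snc pair) and fractional (so all coefficients of $\bfD_X$ lie in $[0,1) \cap \bQ$), the trace pair $(X, \bfD_X)$ is snc and klt. Hence the hypothesis of \pref{pr:discrepancy_of_other_divisors} applies to any exceptional divisor $E$ not obtainable by blowing up strata, yielding $a(E; X, \bfD_X) > 0$.

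Now I would combine this with the comparison formula $b'(E; X, \bfD) = a(E; X, \bfD_X) + d_E$ from \pref{rm:comparison_of_discrepancies}. Since $\bfD$ is effective, $d_E \ge 0$, so
\[
b'(E; X, \bfD) = a(E; X, \bfD_X) + d_E \ge a(E; X, \bfD_X) > 0.
\]
Finally, $b(E; X, \bfD) = r_E \cdot b'(E; X, \bfD)$ and $r_E = 1/(1 - d_E) \ge 1 > 0$, so $b(E; X, \bfD) > 0$, as required.

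I do not expect any real obstacle here: the genuine content of the theorem is entirely packaged into \pref{pr:discrepancy_of_other_divisors}, whose proof (an induction on the number of blow-ups needed to realize $E$) is the essential ingredient. All that remains at this point is the translation between the two notions of discrepancy, which is a one-line computation. The only item worth double-checking is that the definition of ``obtained by repeatedly blowing up strata'' used in the conclusion matches the one used in \pref{pr:discrepancy_of_other_divisors}, but this is simply how the stratification propagates under blow-ups of strata, as described at the beginning of the subsection.
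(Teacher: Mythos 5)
Your proposal is correct and follows exactly the paper's own argument: the pair $(X,\bfD_X)$ is snc and fractional hence klt, \pref{pr:discrepancy_of_other_divisors} gives $a(E;X,\bfD_X)>0$ for divisors not obtained by blowing up strata, and \pref{rm:comparison_of_discrepancies} together with $r_E\ge 1$ converts this into $b(E;X,\bfD)>0$. You are in fact slightly more careful than the paper, which states its displayed inequality only for $b'$ and leaves the passage to $b$ implicit.
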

\begin{proof}
  Since the \logpair $( X, \bfD _{ X })$ is \logsmooth and $\bfD_X$ is fractional,
  \cite[Corollary 2.31(3)]{MR1658959} shows that it is klt.  So for any exceptional
  divisor $E$ which is not obtained by repeatedly blowing up strata, we have
\begin{align}
b'(E;  X, \bfD )\ge a(E;  X, \bfD _{ X } )>0
\end{align}
by
\pref{rm:comparison_of_discrepancies}
and
\pref{pr:discrepancy_of_other_divisors}.
Thus we conclude the proof.
\end{proof}

%
%

\subsection{Toric \texorpdfstring{\MakeLowercase{b}}{b}-\logpairs}

%
%

Now we will study b-\logpairs where the b-divisor is supported on a toric divisor in a toric variety.  In addition to allowing explicit computations, we will provide a second proof of one of the main results of this paper, Theorem~\ref{th:existence_of_Brauer_terminal_resolution}, which shows the existence of b-terminal resolutions, or b-terminalizations.  This result is of central importance, since without it,
one can not begin the b-log minimal model program with b-terminal singularities.

Let us review some basic facts from toric geometry.
We will use results and notation from~\cite{MR2810322}.
Let $X$ be a toric variety with open 
dense torus $T\simeq \Gm^n \subseteq X$.  The variety $X$ 
is determined
by a rational fan $\Sigma$ in the real vector space spanned by the lattice  $N=\Hom(\Gm,T)$.
In particular toric geometric valuations of $\bk (T)$ are given by rational rays in $\R N$.
More precisely, there is a correspondence 
between primitive vectors $w=(w_1,\ldots,w_n) \in \Z^n \simeq N$
(primitive means $\gcd(w_i)=1$)
and toric divisors $D_w$ on some toric model of $\bk (T)$.

Let $\Sigma(1)$ be the set of rays of the fan $\Sigma$.
Write $u_\rho$ for the minimal generator in $N$ of a ray $\rho$ in $\Sigma$.
Write $D_\rho$ or $D_{u_\rho}$ for the divisor associated to $\rho$.
A toric $\Q$-divisor $D$ can be written as
$D=\sum d_\rho D_\rho$.
If $D$ is $\Q$-Cartier it has an associated \emph{support function}
$\phi_D \colon |\Sigma| \to \R$ with the following properties:
\begin{enumerate}
\item $\phi_D$ is linear on each cone $\sigma \in \Sigma$.
\item $\phi_D(N) \subseteq \Q$.
\item $\phi_D(u_\rho)=-d_\rho$.
\item $D=-\sum_{\rho \in \Sigma(1)} \phi_D(u_\rho) D_\rho$.
\end{enumerate}

Let $X$ be the toric variety associated to a simplicial fan $\Sigma$.
Recall the following equality:
\begin{equation}
 K_X = \sum_{\rho \in \Sigma(1)} -D_\rho
\end{equation}
Note also that support functions are preserved by pullback.  More precisely, let $f:\widetilde{X} \to X$
be the map of toric varieties associated to a map of fans $f_\Sigma:\widetilde{\Sigma} \to \Sigma$.  Then for a
$\Q$-Cartier divisor $D$ on $X$, we get that 
\[\phi_{f^*D}=\phi_D \circ f_\Sigma : |\widetilde{\Sigma}| \to \R.\]

The proof of the following proposition follows the notation and proof of \cite[Proposition 11.4.24]{MR2810322}.
Given a b-divisor $\bfD  \in \bDiv(K)$ and a normal model $X$ of $K$, as in Definition~\ref{ramification:index:defn}, we will write 
\[ \bfD_X = \sum_{\Gamma} d_\Gamma \Gamma = \sum_{\Gamma} \lb 1 -\frac{1}{r_\Gamma}\rb \Gamma.\]
We say that $(X,\bfD)$ is a toric b-\logpair if $X$ is a toric variety and $\bfD$ is supported on toric divisors for all models.

\begin{proposition}\label{prop:toric_descrpancy_formula} 
Let $X$ be the toric variety associated to the fan $\Sigma$ and let
 $(X,\bfD)$ be a toric b-\logpair.  Let $w$ be
an element of the lattice $N$ that is in a (possibly not maximal) simplicial cone $\sigma$ in $\Sigma$,
with ramification index $r_w$.  Suppose $\sigma$ has a set of minimal generators
$\sigma=\langle v_1,\ldots,v_m \rangle$ with
$v_1,\ldots,v_m$ in the lattice $N$ with
ramification indices $r_1,\ldots,r_m.$  Write
$w=a_1v_1+\cdots+a_mv_m$.
Then the discrepancy of the divisor $D_w$ associated to $w$, over $(X,\bfD)$ is given by
\begin{equation}
 b'( D_w; X, \bfD )
 =
 \frac{a_1}{r_1} + \cdots + \frac{a_m}{r_m} -\frac{1}{r_w}.
\end{equation}
\begin{equation}\label{eq:formula_for_toric_b_discrepancy}
 b( D_w; X, \bfD )
 =
 \frac{a_1r_w}{r_1} + \cdots + \frac{a_mr_w}{r_m} -1.
\end{equation}
\end{proposition}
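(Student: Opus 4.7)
The plan is to reduce to a local computation on the affine toric variety $U_\sigma$ corresponding to the simplicial cone $\sigma$, then to imitate the standard toric discrepancy argument \cite[Proposition 11.4.24]{MR2810322}, once the correct coefficients of the boundary divisor have been read off from the ramification indices. The key point is that on a simplicial cone, the Weil divisor $K_X+\bfD_X$ is $\bQ$-Cartier, so it admits a support function that is linear on $\sigma$, and pullback of $\bQ$-Cartier toric divisors is computed via composition of support functions.

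First I would replace $X$ by $U_\sigma$ and choose any toric model $f\colon Y\to U_\sigma$ obtained by a refinement of $\sigma$ that has $w$ as a ray (for instance the star subdivision at $w$), so that $D_w$ appears as an honest toric divisor on $Y$. Next I would compute the support function $\phi=\phi_{K_{U_\sigma}+\bfD_{U_\sigma}}$ on the generators. By the standard toric formula and the definition $d_i=1-1/r_i$,
\begin{equation*}
\phi(v_i)\;=\;-\bigl(\text{coeff.\ of }D_{v_i}\text{ in }K_{U_\sigma}+\bfD_{U_\sigma}\bigr)
\;=\;-(-1+d_i)\;=\;\tfrac{1}{r_i}.
\end{equation*}
Since $\phi$ is linear on the simplicial cone $\sigma$ and $w=a_1v_1+\cdots+a_mv_m$, this gives
\begin{equation*}
\phi(w)\;=\;\tfrac{a_1}{r_1}+\cdots+\tfrac{a_m}{r_m}.
\end{equation*}

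Then I would read off the two sides of the defining equation \eqref{eq:definition_of_b'-discrepancy} for b'-discrepancy at the generic point of $D_w$. On the one hand, pullback preserves support functions, so the coefficient of $D_w$ in $f^{*}(K_{U_\sigma}+\bfD_{U_\sigma})$ equals $-\phi(w)=-\sum_i a_i/r_i$. On the other hand, since $(X,\bfD)$ is a toric b-\logpair and $D_w$ is toric, the coefficient of $\bfD$ along $D_w$ is the prescribed $d_w=1-1/r_w$, so the coefficient of $D_w$ in $K_Y+\bfD_Y$ is $-1+d_w=-1/r_w$. Comparing the two expressions for the coefficient of $D_w$ in $\bigl(\bfK+\bfD\bigr)_Y$ yields
\begin{equation*}
-\tfrac{1}{r_w}\;=\;-\sum_{i=1}^{m}\tfrac{a_i}{r_i}\;+\;b'(D_w;X,\bfD),
\end{equation*}
which is the first formula. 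The second formula is then obtained by multiplying through by $r_w$ in view of \eqref{definition_of_b-discrepancy}.

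I do not anticipate a serious obstacle: the only nontrivial ingredient is that $K_X+\bfD_X$ is $\bQ$-Cartier on $U_\sigma$ (automatic from simplicial\-ness of $\sigma$) and that support functions are linear on each cone and pull back by composition with the map of fans, both recorded in the review of toric geometry above. The mild subtlety worth stating carefully is the ``bookkeeping'' step translating the ramification indices $r_i$ into the coefficients $d_i=1-1/r_i$ appearing in $\bfD_X$, and similarly for $r_w$ and $d_w$ on $Y$; once this is done, the computation reduces to evaluating one linear function on one point of the lattice.
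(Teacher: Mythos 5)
Your proposal is correct and follows essentially the same route as the paper's proof: both arguments pass to a refinement containing $\langle w\rangle$ as a ray, use that the support function of $K_X+\bfD_X$ (the paper works with its negative $B=\sum \frac{1}{r_\rho}D_\rho$) takes the value $\pm 1/r_i$ on the generators $v_i$, is linear on the simplicial cone, and pulls back by composition, and then compare coefficients of $D_w$ on the two sides of the defining equation for $b'$. The only differences are cosmetic (localizing to $U_\sigma$ rather than refining the whole fan, and a sign convention).
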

\begin{proof}
  Let $\widetilde{\Sigma}$ be a simplicial refinement of $\Sigma$ that contains
  $\langle w \rangle$ as a ray.  Let $E=\widetilde{\Sigma}(1) \setminus \Sigma(1)$ be
  the set of exceptional divisors.  Note that
  \begin{align*}
    K_{X}+\bfD_X &= -\sum_{\rho \in \Sigma(1)} D_\rho + \sum_{\rho \in \Sigma(1)}\lb 1
    - \frac{1}{r_\rho}\rb D_\rho \\
    &= -\sum_{\rho \in \Sigma(1)}\lb\frac{1}{r_\rho}\rb D_\rho
  \end{align*}
  and similarly for $\widetilde{X}$.  Now
  \begin{align*}
    \sum_{\rho \in E} b'(D_\rho,X,\bfD)D_\rho 
    &= K_{\widetilde{X}}+\bfD_{\widetilde{X}}- f^*(K_{X}+\bfD_X) \\ 
    &= -\sum_{\rho \in
      \widetilde{\Sigma}(1)}\lb\frac{1}{r_\rho}\rb D_\rho +f^*\lb\sum_{\rho \in
      \Sigma(1)}\lb\frac{1}{r_\rho}\rb      D_\rho\rb. 
  \end{align*}
  Now let $\phi$ be the support function associated to the divisor
  \[B=\sum_{\rho \in \Sigma(1)}\lb\frac{1}{r_\rho}\rb D_\rho.\]
  Since $|\widetilde{\Sigma}|=|\Sigma|$ we will also denote the support function of
  $f^*B$ by $\phi$.  Note that
\begin{equation}
\begin{split}
  f^*B = - \sum_{\rho \in \widetilde{\Sigma}(1)} \phi(u_\rho)D_\rho
  =  -\sum_{\rho \in \Sigma(1)}  \phi(u_\rho)D_\rho - \sum_{\rho \in E} \phi(u_\rho) D_\rho\\
  = \sum_{\rho \in \Sigma(1)} \frac{1}{r_\rho}D_\rho - \sum_{\rho \in E} \phi(u_\rho)
  D_\rho,
\end{split}
\end{equation}
so that
\begin{equation}
 \sum_{\rho \in E} b'(D_\rho,X,\bfD)D_\rho
 =
 -\sum_{\rho \in E}\lb \frac{1}{r_\rho}+\phi(u_\rho)\rb D_\rho.
\end{equation}

To elucidate this sum we consider the coefficient of $D_w$ as above.
Since $w= a_1v_1+\cdots+a_m v_m$ and $\phi(v_i)=-1/r_i$ we get the desired result.

\end{proof}

The next result follows from Remark~\ref{rm:finiteness_of_non_terminal_witnesses}, since toric b-\logpairs are b-klt.  However, since this also follows directly from toric geometry, we include an alternate proof.

\begin{proposition}\label{prop:finitely_many_toric}
Let $(X,\bfD)$ be a toric b-\logpair with $X$ normal and $\Q$-factorial.  Then there are finitely many toric divisors $D_w$ over $X$ such that $b(D_w,X,\bfD) \leq 0$. 
\end{proposition}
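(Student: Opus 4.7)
The plan is to extract finiteness directly from the explicit formula supplied by \pref{prop:toric_descrpancy_formula}. Since $X$ is $\bQ$-factorial, the fan $\Sigma$ is simplicial, and every toric prime divisor over $X$ arises as $D_w$ for some primitive lattice vector $w \in N \cap |\Sigma|$. Such a $w$ lies in the relative interior of a unique cone $\sigma \in \Sigma$; writing $\sigma = \langle v_1, \dots, v_m \rangle$ with $v_1, \dots, v_m$ the primitive minimal generators and $w = a_1 v_1 + \cdots + a_m v_m$ with $a_i > 0$, the formula \eqref{eq:formula_for_toric_b_discrepancy} reads
\[
 b ( D_w; X, \bfD ) = r_w \lb \sum_{i=1}^m \frac{a_i}{r_i} \rb - 1.
\]

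Hence the inequality $b ( D_w; X, \bfD ) \le 0$ is equivalent to $\sum_i a_i/r_i \le 1/r_w$. Because $\bfD$ is fractional its coefficients $d_\rho$ lie in $[0,1)$, so each ramification index $r_\rho = 1/(1 - d_\rho)$ is finite and at least $1$. In particular $1/r_w \le 1$, so any such $w$ lies in the region
\[
 P_\sigma := \lc \textstyle\sum_i a_i v_i \,:\, a_i \ge 0, \; \sum_{i=1}^m a_i/r_i \le 1 \rc \subset \sigma.
\]
The next step is to notice that $P_\sigma$ is the simplex with vertices $0$ and $r_1 v_1, \dots, r_m v_m$, which is bounded precisely because each $r_i$ is finite. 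A bounded region of $N_{\bR}$ contains only finitely many lattice points, hence only finitely many $w \in \sigma$ can satisfy $b ( D_w; X, \bfD ) \le 0$.

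Since $\Sigma$ consists of finitely many cones, summing over $\sigma \in \Sigma$ gives the desired finiteness. There is essentially no obstacle: the proof is a direct geometric repackaging of \pref{prop:toric_descrpancy_formula}, with the one genuine input beyond it being the finiteness of each $r_i$, which is guaranteed by the standing fractional assumption on $\bfD$. (As a sanity check, the rays of $\Sigma$ themselves contribute only finitely many non-exceptional divisors, consistent with the claim.)
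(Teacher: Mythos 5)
Your proof is correct and follows essentially the same route as the paper's: both reduce to the explicit formula of Proposition~\ref{prop:toric_descrpancy_formula} on each of the finitely many cones of $\Sigma$ and observe that the non-positivity condition confines $w$ to a bounded region. Your version is in fact slightly more precise, since you identify that region as the simplex with vertices $0, r_1v_1,\dots,r_mv_m$ (using finiteness of the $r_i$ from the fractional hypothesis), where the paper only asserts that the discrepancy is positive for sufficiently large coefficients $a_i$.
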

\begin{proof}
 
Let $\Sigma$ be the fan associated to $X$, let $\sigma \in \Sigma$ be a cone, and write $\sigma=\la v_1,\ldots,v_m \rangle$ for minimal generators $v_i$ in $N$.  Given a primitive vector $w = a_1v_1+\cdots+a_mv_m \in \sigma,$ the formula \eqref{eq:formula_for_toric_b_discrepancy} for discrepancy gives 
\[ b(D_w;X,\bfD)=\frac{a_1r_w}{r_1} + \cdots + \frac{a_mr_w}{r_m} -{1}.\]
Since  $r_w \geq 1$, and
this is positive when the $a_i$ are sufficiently large, there are finitely possible $w \in \sigma$ such that 
$b'(D_w;X,\bfD) \leq 0$ is finite.  The result follows since there are finitely many cones in $\Sigma$.
\end{proof}

\begin{proposition}
Let $X$ be a toric variety associated to the fan $\Sigma$.  Let $D_{w_1},\ldots,D_{w_p}$ be a finite set of divisors over $X$ corresponding to the primitive vectors $w_1,\ldots,w_p \in |\Sigma|$ with $\la w_i \rangle \notin \Sigma$.
Then there is a $\Q$-factorial toric variety $X'$ and a birational proper toric morphism $f:X' \to X$ such that the exceptional divisors of $f$ are exactly the toric divisors $D_{w_1},\ldots,D_{w_p}$.
\end{proposition}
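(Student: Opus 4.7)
The plan is to produce $f$ as the toric morphism associated to a carefully chosen refinement $\Sigma'$ of $\Sigma$. More precisely, I would build $\Sigma'$ so that (i) it is a refinement of $\Sigma$ with $|\Sigma'|=|\Sigma|$, (ii) its set of rays is exactly $\Sigma(1)\cup\{\langle w_1\rangle,\ldots,\langle w_p\rangle\}$, and (iii) it is simplicial.  Then the induced toric morphism $f\colon X'\to X$ \cite[Theorem~3.3.4]{MR2810322} is automatically birational (the open tori coincide) and proper ($|\Sigma'|=|\Sigma|$), the variety $X'$ is $\bQ$-factorial because $\Sigma'$ is simplicial, and the $f$-exceptional prime divisors are precisely the $D_\rho$ for $\rho\in\Sigma'(1)\setminus\Sigma(1)$.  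The hypothesis $\langle w_i\rangle\notin\Sigma$ forces the minimal cone of $\Sigma$ containing $w_i$ to have dimension $\geq 2$, so the image of $D_{w_i}$ in $X$ has codimension $\geq 2$; this matches the desired description of the exceptional set.

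To build $\Sigma'$ I would proceed in two stages.  First, replace $\Sigma$ by a simplicial refinement $\Sigma_0$ with $\Sigma_0(1)=\Sigma(1)$.  Such a refinement is classical: one inductively triangulates each non-simplicial cone using only its existing generators by repeated pulling along a chosen ray (see e.g.\ \cite[Proposition~11.1.7]{MR2810322}).  Since no rays are added, the induced map $X_0\to X$ is an isomorphism in codimension~$1$ and extracts no divisor.  Second, for $i=1,\ldots,p$, I would let $\Sigma_i$ be the star subdivision of $\Sigma_{i-1}$ at the ray $\langle w_i\rangle$.  Because $w_i\in|\Sigma|=|\Sigma_{i-1}|$, the vector $w_i$ lies in the relative interior of a unique (necessarily simplicial) cone of $\Sigma_{i-1}$, so this operation is well-defined; standard toric combinatorics shows that it preserves simpliciality and introduces exactly the single new ray $\langle w_i\rangle$.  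Since the $w_j$ are distinct primitive vectors, the rays $\langle w_j\rangle$ are distinct, so no $\langle w_i\rangle$ accidentally coincides with a previously added ray.  Setting $\Sigma':=\Sigma_p$ yields a simplicial refinement with the required ray set.

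The only technical points to verify are (a) that star subdivision of a simplicial fan at a new ray remains simplicial and adds only the intended ray, and (b) that any fan admits a simplicial refinement with the same set of rays.  Both facts are standard (cf.\ \cite[Lemma~11.1.5 and Proposition~11.1.7]{MR2810322}), so there is no real obstacle; the argument is essentially careful bookkeeping with star subdivisions, together with the translation between fans and toric morphisms.
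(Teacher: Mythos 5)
Your proposal is correct and follows essentially the same route as the paper: first pass to a simplicial refinement with the same rays via \cite[Proposition~11.1.7]{MR2810322}, then perform successive star subdivisions at the rays $\langle w_1\rangle,\ldots,\langle w_p\rangle$. Your write-up is merely more explicit about the bookkeeping (properness, birationality, $\bQ$-factoriality, and the identification of the exceptional divisors), all of which the paper leaves implicit.
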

\begin{proof}
We first replace $\Sigma$ by a simplicial refinement by triangulating the non-simplicial cones as described in
\cite[Proposition 11.1.7]{MR2810322}.
Given a divisor $D_{w_1}$ to extract, we form the star subdivision 
$\Sigma_1:=\Sigma^*(w_1)$ as constructed in \cite[p.~515, Section~11.1]{MR2810322}.  This forms a simplicial refinement of $\Sigma$ with exactly one new ray
$\langle w_1\rangle$.  We then repeat for all $w_i$ to obtain $\Sigma'$ simplicial with new rays corresponding exactly to those primitive vectors with non-positive discrepancy. 
\end{proof}

The next result follows from the more general Theorem~\ref{th:existence_of_Brauer_terminal_resolution}, but we will use the statement below in the toroidal setting to provide a more constructive proof of this theorem.

\begin{proposition}\label{prop:exists_bterm}
Any toric b-\logpair has a toric $\Q$-factorial b-terminalization.
\end{proposition}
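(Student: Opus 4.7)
The plan is to combine the toric tools developed in this section with the key discrepancy results established earlier, yielding an explicit construction. Starting with the toric b-\logpair $(X, \bfD)$ associated to a fan $\Sigma$, I would first refine $\Sigma$ to a smooth fan $\Sigma_1$ by a standard toric subdivision, obtaining a projective birational toric morphism $X_1 \to X$ with $X_1$ smooth. Since $X_1$ is smooth and $\bfD_{X_1}$ is supported on the toric boundary, which is snc, the pair $(X_1, \bfD_{X_1})$ is \logsmooth; as $\bfD$ is fractional, it is also klt by \cite[Corollary 2.31(3)]{MR1658959}.

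Next, by Proposition~\ref{prop:finitely_many_toric} applied to the (smooth, hence $\Q$-factorial) pair $(X_1, \bfD)$, there are only finitely many exceptional toric divisors over $X_1$ with non-positive b-discrepancy; let $S = \{D_{w_1}, \ldots, D_{w_p}\}$ denote this finite set. Using the preceding proposition, I would extract precisely these divisors to obtain a $\Q$-factorial toric variety $Y$ together with a projective birational toric morphism $f \colon Y \to X_1$ whose exceptional divisors are exactly the elements of $S$.

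The remaining task is to verify that $(Y, \bfD)$ is b-terminal. Let $F$ be any exceptional divisor over $Y$; then $F$ is also exceptional over $X_1$, and the proof splits into two cases. If $F$ is a toric divisor, then $F \notin S$ (since elements of $S$ are no longer exceptional over $Y$), so $b(F; X_1, \bfD) > 0$ by the defining property of $S$. If $F$ is non-toric, then it cannot be obtained by repeatedly blowing up strata of the \logsmooth pair $(X_1, \bfD_{X_1})$, since blowing up a torus-invariant stratum produces only toric exceptional divisors; hence Theorem~\ref{th:positivity_of_Brauer_discrepancy_of_non_toric_divisors} yields $b(F; X_1, \bfD) > 0$. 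In either case, since every $f$-exceptional prime divisor lies in $S$ and therefore satisfies $b(\cdot; X_1, \bfD) \le 0$ by construction, Lemma~\ref{lm:brauer_discrepancy_under_extraction_of_bad_divisor} gives $b(F; Y, \bfD) \ge b(F; X_1, \bfD) > 0$, as required.

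The main obstacle is making the case analysis between toric and non-toric exceptional divisors clean and exhaustive. The key conceptual point is that the structural results already assembled in this section do all the work: Proposition~\ref{prop:finitely_many_toric} confines the toric obstruction to a finite set, the preceding proposition constructs the toric extraction, and Theorem~\ref{th:positivity_of_Brauer_discrepancy_of_non_toric_divisors} automatically rules out non-toric obstructions on the \logsmooth model $X_1$. This argument yields an explicit toric analogue of Theorem~\ref{th:existence_of_Brauer_terminal_resolution}, avoiding any appeal to \cite{MR2601039}.
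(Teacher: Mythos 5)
Your proposal is correct and follows essentially the same route as the paper's own proof: pass to a toric log resolution, use Proposition~\ref{prop:finitely_many_toric} together with the extraction proposition to pull out exactly the finitely many toric divisors of non-positive b-discrepancy, and conclude via Lemma~\ref{lm:brauer_discrepancy_under_extraction_of_bad_divisor}. The only difference is that you spell out the step the paper leaves implicit, namely invoking Theorem~\ref{th:positivity_of_Brauer_discrepancy_of_non_toric_divisors} to rule out non-toric divisors with non-positive b-discrepancy on the \logsmooth model --- a worthwhile clarification, since without it the hypotheses of the negativity lemma alone do not give b-terminality.
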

\begin{proof}
  We first pass to a toric \logres $(X,\bfD)$ of our given toric b-\logpair.  This is done by simply finding a toric resolution of singularities since the toric divisor will automatically be simple normal crossings.  Then we obtain $f:X' \to X$ 
with simplicial fan $\Sigma'$ refining $\Sigma$,
which extracts exactly the toric divisors $D_w$ such that $b(D_w;X,\bfD)\leq 0$ using the above two propositions.
Now by Lemma~\ref{lm:brauer_discrepancy_under_extraction_of_bad_divisor}, the discrepancy for $X'$ is larger than the
discrepancy for $X$.
\end{proof}

\subsection{Toroidal b-\logpairs}

Now we consider toroidal b-\logpairs.  We will use \cite{MR0335518} for definitions, notation and basic results, but we will provide some heuristic explanations.  We say a \logpair $(X,D)$ is toroidal if $U=X\setminus \Supp D \subset X$ is a toroidal embedding as in
\cite[p.54]{MR0335518}.
As explained in \cite[p.71]{MR0335518}, we can associate a {\it conical polyhedral complex with integral structure}
$
 \Delta = (|\Delta|,\sigma^Y, M^Y)
$
 to a toroidal embedding.  A conical polyhedral complex consists of a finite collection of cones $\{\sigma^Y\}_Y$ with an integral structure $\sigma^Y \subset \R N^Y = \R \Hom(M^Y,\Z)$ for each cone, indexed by the natural stratification $\{Y\}$ associated to the toroidal embedding $U \subset X$.  The affine toric variety associated to the cone $\sigma_Y$ corresponding to stratum $Y$ describes the
 \'etale local structure of $U \subseteq X$ at the generic point $y$ in $Y$.
 A face of a cone in $\Delta$ is again a cone in $\Delta$ and the cones (with their integral structures) are glued along faces.  Unlike the case of a fan used in toric geometry, there is no ambient lattice $N$ so that $|\Delta| \subset \R N$ and we can have more than two faces glued along a face of codimension one.  The conical polyhedral complex does not uniquely determine the toroidal variety.  However, akin to refinements of fans in toric geometry, 
 there is a correspondence between {\it finite rational partial polyhedral (f.r.p.p.) decompositions}  (see \cite[Definition 2, p.86]{MR0335518} for the precise definition) $\Delta'$ of $\Delta$ with $|\Delta'| = |\Delta|$ and proper birational morphisms $X' \to X$ that are {\it allowable (or toroidal) modifications} by \cite[Theorem $6 ^{ * }$, p.90]{MR0335518}.  
 We say a b-\logpair $(X,\bfD)$ is {\it toroidal} if $(X,\bfD_X)$ is toroidal.
 Note that this implies that for all allowable modifications, $X' \to X$, we have that $(X,\bfD_{X'})$ is also toroidal.  
 We call the exceptional divisors that are divisors in allowable modifications
 the {\it toroidal divisors} over $X$.

 Since a toroidal variety is characterized by being \'etale locally an affine toric
 variety with toric boundary, we see that \alogsmooth pair $(X,D)$ is toroidal.
 Suppose we are given a rational ray $\rho$ in a cone $\sigma_Y \subseteq \Delta$
 corresponding to the stratum $Y$.  We can form the star subdivison $\Delta^*(\rho)$
 where we add one ray $\rho =\langle u_\rho\rangle$ and subdivide every cone
 $\tau = \langle u_1,\ldots, u_m \rangle$ containing $\rho$ by forming cones
 $\langle u_\rho, u_1,\ldots,\hat{u}_i,\ldots,u_m\rangle$ exactly as in \cite[p.~515,
 Section~11.1]{MR2810322}.  We note the following facts about the star subdivisions:
 \begin{itemize}
 \item $\Delta^*(\rho)$ is a f.r.p.p. decompostion of $\Delta$.
  \item $|\Delta^*(\rho)| = |\Delta|$.
\item There is the corresponding projective allowable modification $X^*(\rho) \to X$.
  \item If $\Delta$ is simplicial, then so is $\Delta^*(\rho)$ and $X^*(\rho)$ is $\Q$-factorial.
 \item The cones of dimension one (rays) in $\Delta^*(\rho)$ are  rays in
   $\Delta$ with the addition of the one new ray $\rho.$
\item The divisor $D_\rho$ is Cartier.
 \end{itemize}

 The next lemma follows easily from  \cite[Theorem $10 ^{ * }$, p.90]{MR0335518}.
   \begin{lemma}  Let $(X,D)$ be toroidal and let $\Delta = (|\Delta|,\sigma^Y,M^Y)$ be the associated conical polyhedral complex with integral structure.
     Let $\sigma_Y = \langle u_1,\ldots,u_m\rangle$ be minimal primitive generators of $\sigma_Y$ and $\rho = \langle \sum u_i \rangle$
   then $X^*(\rho) \to X$
   is the normalization of the
   blow up of the stratum $Y$.
\end{lemma}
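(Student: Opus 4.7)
The plan is to invoke the correspondence of \cite[Theorem~$10^*$, p.~90]{MR0335518} between allowable modifications of the toroidal pair $(X,D)$ and projective f.r.p.p.\ decompositions of $|\Delta|$, and use it to reduce the lemma to a purely toric computation. The normalization of the blow-up of $Y$ is a normal allowable modification of $(X,D)$, so it corresponds under this bijection to a unique projective f.r.p.p.\ decomposition $\Delta'$ of $\Delta$; what must be proved is that $\Delta' = \Delta^{*}(\rho)$.

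By the defining property of the conical polyhedral complex $\Delta$, the toroidal pair $(X,D)$ is, étale-locally at a generic point of $Y$, isomorphic to the affine toric variety $X_{\sigma_Y}$ together with its standard toric boundary, and under this identification the stratum $Y$ corresponds to the closed torus orbit $O_{\sigma_Y} \subset X_{\sigma_Y}$. Since blow-ups and normalisation commute with étale base change, and since the combinatorial data $\Delta$ is itself étale-local, it suffices to verify the following toric assertion: the normalisation of the blow-up of $O_{\sigma_Y}$ inside $X_{\sigma_Y}$ is the toric variety $X_{\sigma_Y^{*}(\rho)}$ associated with the star subdivision at $\rho = \langle u_1 + \cdots + u_m \rangle$.

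For the toric assertion, I would argue as follows. The ideal sheaf of $O_{\sigma_Y}$ is the monomial ideal generated by the characters $\chi^m$ with $m \in \sigma_Y^\vee \cap M^Y$ and $\langle m, u_i\rangle \ge 1$ for every $i$. The normalised blow-up of a monomial ideal on an affine toric variety is, by a standard principle (see e.g.~\cite{MR2810322}), the toric variety corresponding to the refinement of $\sigma_Y$ by the inner normal fan of the Newton polyhedron of that ideal. A direct check then shows that this refinement is exactly $\sigma_Y^{*}(\rho)$: the support function of the Newton polyhedron is linear on a subcone $\tau \subseteq \sigma_Y$ precisely when one generating character realises the minimum throughout $\tau$, and the maximal such subcones turn out to be those spanned by $\rho$ together with a facet of $\sigma_Y$, i.e.\ the maximal cones of the star subdivision.

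The main obstacle will be handling this Newton-polyhedron computation cleanly when $\sigma_Y$ is not assumed simplicial, since the combinatorial structure of the support function is more intricate and one has to track the interplay between facets of $\sigma_Y$ and the additional ray $\rho$ carefully. Once the local toric identification is established, Theorem~$10^*$ of \cite{MR0335518} globalises the equality $\Delta' = \Delta^{*}(\rho)$, completing the proof.
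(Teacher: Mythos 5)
Your overall strategy --- reduce via the correspondence of \cite[Theorem $10^{*}$]{MR0335518} to an \'etale-local toric computation of the normalized blow-up of the stratum ideal --- is the same as the paper's, which likewise cites Theorem $10^{*}$ and then asserts that the resulting f.r.p.p.\ decomposition ``is clearly given by the star subdivision.'' You are more explicit about how that assertion would be checked, which exposes two problems. First, your description of the ideal is wrong: in the local model $X_{\sigma_Y}$ the stratum is the closed orbit, whose ideal is spanned by the characters $\chi^{m}$ with $m \in \sigma_Y^{\vee} \cap M^{Y}$ and $\langle m, u_i\rangle \ge 1$ for \emph{at least one} $i$ (equivalently $m \notin \sigma_Y^{\perp}$), not for \emph{every} $i$. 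With your condition, in the model $\mathbb{A}^{m}$ the ideal is the principal ideal $(x_1\cdots x_m)$, whose normalized blow-up is an isomorphism, so the Newton-polyhedron computation you propose would return the trivial subdivision.

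Second, and more seriously, the ``direct check'' you defer is not a formality: the normal fan of the Newton polyhedron of the maximal monomial ideal of $X_{\sigma_Y}$ is \emph{not} the star subdivision at $\langle\sum u_i\rangle$ for a general cone, and the difficulty is not confined to non-simplicial cones. Already for the simplicial cone $\sigma = \langle (1,0),(1,3)\rangle$ (the $A_2$ surface singularity $xy=z^3$) the order function of the maximal ideal is $\min(v_1,\, v_2,\, 3v_1 - v_2)$, whose domains of linearity introduce the two rays through $(1,1)$ and $(1,2)$, whereas the star subdivision at $(1,0)+(1,3)=(2,3)$ introduces only the single ray through $(2,3)$; geometrically, the blow-up of the origin of $xy=z^3$ is the minimal resolution, with two exceptional curves. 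So the combinatorial identification --- and with it the lemma in the stated generality --- fails; it does hold when every $\sigma_Y$ is a smooth cone, i.e.\ in the \logsmooth situation, which is the only case in which the lemma is subsequently used (Corollary~\ref{cor:toroidal_blow_up_strata}). There the ideal is $(x_1,\dots,x_m)$, the order function is $\min(v_1,\dots,v_m)$, and its domains of linearity are exactly the maximal cones of the star subdivision at $e_1+\cdots+e_m$. The paper's own proof shares this gap, but your argument should be restricted to, and completed in, the smooth-cone case.
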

   \begin{proof}
     Since the ideal sheaf of any stratum of the \logpair
$
 ( X, D) 
$
is a canonical coherent sheaf of fractional ideals in the sense of \cite[p.90]{MR0335518},
any blowup of
$
 X
$
along a stratum corresponds to a f.r.p.p. decomposition  of
$
 \Delta
$
by \cite[Theorem $10 ^{ * }$, p. 93]{MR0335518}.  This is clearly given by the star subdivision described above.
\end{proof}
   \begin{corollary}\label{cor:toroidal_blow_up_strata}
     If $(X,D)$ is \logsmooth, then the divisors over $X$ obtained by blowing up strata are exactly the toroidal divisors over $X$.
     \end{corollary}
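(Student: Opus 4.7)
The plan is to establish the two inclusions separately. That every divisor obtained by iteratively blowing up strata is a toroidal divisor over $X$ follows by induction on the number of blow-ups from the preceding lemma, together with the observation that composing two allowable modifications yields another, since the corresponding f.r.p.p. decompositions can be composed and still refine $\Delta$. So the substance of the corollary lies in the reverse inclusion: every toroidal divisor over $X$ arises from iterated stratum blow-ups.

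For this, let $E$ be a toroidal divisor over $X$. By the correspondence recalled just before the statement, $E$ corresponds to a rational ray $\rho$ of some f.r.p.p. decomposition $\Delta'$ of the conical polyhedral complex $\Delta$ attached to $(X,D)$, with $\rho$ not itself a ray of $\Delta$. Because $(X,D)$ is \logsmooth, every cone of $\Delta$ is simplicial, so the problem reduces to a purely combinatorial claim: in a simplicial cone $\sigma = \la u_1, \ldots, u_m \ra$, any rational ray $\rho$ in the relative interior of $\sigma$ can be realized as a ray of some refinement of $\sigma$ obtained by a finite sequence of star subdivisions, each performed at the sum of the minimal generators of some current sub-cone containing $\rho$.

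I would prove this combinatorial claim by induction on $\sum a_i$, after writing $\rho = \la a_1 u_1 + \cdots + a_m u_m \ra$ with $a_i \in \Z_{\geq 0}$ (clearing denominators, and replacing $\sigma$ by the smallest face containing $\rho$ so that all $a_i > 0$). Star-subdividing $\sigma$ at $u_0 := u_1 + \cdots + u_m$ replaces $\sigma$ by simplicial cones $\la u_0, u_1, \ldots, \widehat{u_i}, \ldots, u_m \ra$; choosing $i$ so that $a_i$ is minimal, one checks that in the new generators $\rho$ has coefficients $a_i,\, a_1 - a_i,\, \ldots,\, \widehat{a_i - a_i},\, \ldots,\, a_m - a_i$, whose sum is smaller than $\sum a_j$ by the positive integer $(m-1)a_i$ (using $m \geq 2$, since otherwise $\rho$ is already a ray). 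The induction terminates with $\rho$ realized as a ray of a refinement built entirely from such star subdivisions, and the preceding lemma then identifies each step with the blow-up of the corresponding stratum of the current \logsmooth pair, so $E$ is obtained by iterated stratum blow-ups.

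The main obstacle is verifying that at each intermediate stage the cone into which $\rho$ falls still corresponds to a stratum of the intermediate toroidal pair, so that the next star subdivision is genuinely the blow-up of a stratum rather than of some other subvariety. This rests on the standard fact that blowing up a stratum of \alogsmooth pair produces another \logsmooth pair whose new stratification is exactly the one prescribed by the refined cone complex; this deserves an explicit remark in a complete write-up, but once granted the combinatorial induction above finishes the argument.
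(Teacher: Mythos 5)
Your proposal is correct and follows essentially the same route as the paper: reduce to a single smooth simplicial cone, star-subdivide at the sum of the generators carrying nonzero coefficients (which the preceding lemma identifies with the blow-up of the corresponding stratum), and induct on the sum of the coefficients of $\rho$. The only difference is bookkeeping --- you subtract the minimal coefficient $a_i$ so that $\rho$ lands in a single new simplicial cone, while the paper subtracts $1$ from each nonzero coefficient --- and the smoothness of the cones at every stage is exactly the remark you flag as needed to guarantee each subdivision is a genuine stratum blow-up.
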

   \begin{proof}  It is clear that the divisors obtained by blowing up strata will be toroidal, so we must prove the converse.
     We first consider the case of affine space as a toric variety $\bG_m^n \subset \bA^n$.  Let $e_1,\ldots,e_n$ be the minimal generators of the cone in the lattice $N$.  In this case, a toric divisor corresponds to primitive vector $w$ with all coordinates non-negative.  We will construct a sequence of blow ups at smooth toric subvarities to obtain $\langle w \rangle$ as a ray.  We write $w=\sum a_ie_i$ with $a_i \geq 0$.  If all $a_i \leq 1$ we are done.  Otherwise we form the star subdivision at $v=\sum \sgn (a_i) e_i$ where $\sgn(a_i)$ is the sign function.  Now $w$ will be in a new smooth simplicial cone which includes $v$ as a vertex and we have
     \[w=v+\sum_{a_i \neq 0} (a_i -1) e_i.\]  Now the coefficients of $w$ in terms of generators of the new cone are smaller and the coefficient of $v$ is one.  So by repeating this process we will eventually obtain $w$.

     Now given a general \logsmooth toroidal pair $(X,D)$, any toroidal divisor corresponds to a ray $\rho =\langle w \rangle$ in some cone $\sigma_Y$ associated to some stratum $Y$.  Since the cone $\sigma_Y$ is smooth and simplicial, we can carry out the sequence of star subdivisions described above.  This will yield a sequence of blow ups at strata eventually realizing the toroidal divisor corresponding to $w$.
     \end{proof}

   \begin{proposition}
     Let $(X,\bfD)$ be \alogsmooth b-\logpair.  Let $E$ be a divisor over $X$.  If $E$ is not toroidal then $b(E;X,D) >0$.  If $E$ is toroidal then the centre of $E$ on $X$ is in a strata $Y$ and $E$ corresponds to a ray $\langle w \rangle$ in the cone $\sigma_Y = \langle v_1,\ldots,v_m \rangle$ with minimal generators $v_i$.  Let $r_w$ be the ramification index of $E$,  let $r_i$ be the ramification index of $v_i$, and write $w=a_1v_1 + \cdots + a_mv_m$.  Then
     $$ b(E;X,\bfD)=\frac{a_1r_w}{r_1} + \cdots + \frac{a_mr_w}{r_m} -{1}.$$
   \end{proposition}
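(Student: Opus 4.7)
The proof splits into two cases according to whether $E$ is toroidal. For the non-toroidal case, the plan is to combine Corollary~\ref{cor:toroidal_blow_up_strata}---which in the \logsmooth setting identifies toroidal divisors with those obtained by repeatedly blowing up strata---with Theorem~\ref{th:positivity_of_Brauer_discrepancy_of_non_toric_divisors}. The latter says that $b(E;X,\bfD) \le 0$ forces $E$ to arise from iterated blowups of strata; contrapositively, if $E$ is not toroidal then $b(E;X,\bfD) > 0$.

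For the toroidal case, the plan is to reduce the computation to Proposition~\ref{prop:toric_descrpancy_formula} via the \'etale local toric model. Let $Y$ be the stratum containing the centre $C_X E$, and let $\sigma_Y = \langle v_1,\ldots,v_m\rangle$ be its associated cone. In the \logsmooth setting, an \'etale neighbourhood of the generic point of $Y$ in $X$ is isomorphic to an \'etale neighbourhood of the corresponding torus-fixed stratum in the affine toric variety $X_{\sigma_Y}$, in such a way that the boundary components of $\bfD_X$ passing through $Y$ correspond to the toric divisors $D_{v_i}$; in particular the ramification indices along these divisors coincide. Under this identification, the exceptional divisor $E$ corresponds to the toric valuation determined by $w = a_1 v_1 + \cdots + a_m v_m$, and the ramification index $r_w$ agrees with that of $E$.

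Since the defining relation of b-discrepancy in Definition~\ref{df:discrepancy_of_b-log_pair} is local at the generic point of $E$ and pulls back under \'etale base change, I conclude that $b(E;X,\bfD)$ equals the b-discrepancy of $D_w$ in the toric b-\logpair $(X_{\sigma_Y}, \bfD_{X_{\sigma_Y}})$. Equation~\eqref{eq:formula_for_toric_b_discrepancy} of Proposition~\ref{prop:toric_descrpancy_formula} then yields the stated formula.

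The main obstacle is the \'etale-local identification: one must verify that extracting $E$ over $X$ corresponds to the toric modification extracting $D_w$ over $X_{\sigma_Y}$, and that the coefficients of $\bfD$ transfer correctly. In the \logsmooth case this is transparent because $\sigma_Y$ is a standard orthant whose generators $v_i$ are in natural bijection with the boundary components through the generic point of $Y$, and the b-divisor is determined locally by the coefficients along these components.
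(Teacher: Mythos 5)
Your proposal follows the same route as the paper: the non-toroidal case is exactly the combination of Corollary~\ref{cor:toroidal_blow_up_strata} with Theorem~\ref{th:positivity_of_Brauer_discrepancy_of_non_toric_divisors}, and the toroidal case reduces to the toric discrepancy formula of Proposition~\ref{prop:toric_descrpancy_formula} via the \'etale-local toric model along the stratum $Y$. Your additional remarks on why the \'etale-local identification transfers the coefficients and ramification indices correctly are a welcome elaboration of a step the paper leaves implicit, but the argument is essentially identical.
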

   \begin{proof}
     This first statement follows by combining Corollary~\ref{cor:toroidal_blow_up_strata} and Theorem~\ref{th:positivity_of_Brauer_discrepancy_of_non_toric_divisors}.  For the second statement, we know by Corollary~\ref{cor:toroidal_blow_up_strata} that $E$ can be obtained by blowing up strata and so will appear in a toroidal morphism that is \'etale locally toric along $Y$.  So we can apply Proposition~\ref{prop:toric_descrpancy_formula}.
     \end{proof}

\begin{proposition}\label{prop:exists:terminal:extraction:toroidal}
Let $(X,\bfD)$ be a toroidal b-\logpair.  Then there is a birational projective toroidal modification
$
 f \colon X' \to X
$
such that
$
 X'
$
is 
$
 \bQ
$-factorial and the exceptional divisors of $f$ are exactly the toroidal divisors $D_w$ over $X$ with
$
 b ( D_w; X, \bfD ) \leq 0
.$
\end{proposition}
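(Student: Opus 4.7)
The plan is to mimic the toric argument (Propositions~\ref{prop:finitely_many_toric} and~\ref{prop:exists_bterm}) using the dictionary between f.r.p.p.~decompositions of the conical polyhedral complex $\Delta = (|\Delta|, \sigma^Y, M^Y)$ associated to $(X,\bfD_X)$ and projective toroidal modifications, as supplied by \cite[Theorem $6^*$, p.~90]{MR0335518}. First I would replace $\Delta$ by a simplicial f.r.p.p.~refinement (triangulating each non-simplicial cone as in the toric case) to obtain a projective toroidal modification $X_1 \to X$ with $X_1$ $\mathbb{Q}$-factorial; since the toric divisors over a simplicial cone correspond exactly to the toroidal divisors over the \'etale-local toric chart at the corresponding stratum, this replacement does not change which divisors over $X$ are toroidal, only their labelling inside $|\Delta|$.

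Next I would show that the set of toroidal divisors $D_w$ over $X_1$ with $b(D_w; X_1, \bfD) \le 0$ is finite. Working one cone $\sigma_Y = \langle v_1, \ldots, v_m\rangle$ at a time, any toroidal divisor with centre $\subset \overline{Y}$ corresponds to a primitive $w = a_1 v_1 + \cdots + a_m v_m \in \sigma_Y$, and the formula
\begin{equation*}
  b(D_w; X_1, \bfD) = \frac{a_1 r_w}{r_1} + \cdots + \frac{a_m r_w}{r_m} - 1
\end{equation*}
(valid for toroidal divisors by the preceding proposition, which reduces cone-by-cone to Proposition~\ref{prop:toric_descrpancy_formula}) is positive as soon as any $a_i$ is sufficiently large, exactly as in the proof of Proposition~\ref{prop:finitely_many_toric}. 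Since $\Delta$ has finitely many cones, the total set of bad rays $\{\rho_1, \ldots, \rho_p\}$ is finite.

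Then I would successively star-subdivide $\Delta$ at $\rho_1, \ldots, \rho_p$. Each star subdivision $\Delta \leadsto \Delta^*(\rho_i)$ is a simplicial f.r.p.p.~refinement preserving support and adding exactly one new ray, so iterating yields a simplicial f.r.p.p.~decomposition $\Delta'$ with $|\Delta'| = |\Delta|$ whose set of new rays is precisely $\{\rho_1, \ldots, \rho_p\}$. By \cite[Theorem $6^*$, p.~90]{MR0335518} this corresponds to a projective toroidal modification $f \colon X' \to X_1$ with $X'$ $\mathbb{Q}$-factorial whose exceptional divisors are exactly $D_{\rho_1}, \ldots, D_{\rho_p}$. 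Composing with $X_1 \to X$ gives the desired $f \colon X' \to X$.

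The main subtlety I expect is bookkeeping: verifying that the list of bad toroidal divisors does not change after the preliminary simplicial refinement (since b-discrepancies of divisors over $X$ are intrinsic, while the presentation via $\Delta$ changes), and checking that iterated star subdivisions really do not introduce new rays beyond those at which we subdivide. Both are local on cones and follow from the explicit description of star subdivisions, but they are the points where one must be careful, especially since $|\Delta|$ is not globally embedded in a single lattice $N$ as in the toric case.
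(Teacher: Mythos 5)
Your proposal follows essentially the same route as the paper: identify the finitely many bad toroidal divisors cone-by-cone via the toric discrepancy formula, realize each as a ray in a cone of $\Delta$, perform iterated star subdivisions, and invoke \cite[Theorem $6^*$, p.~90]{MR0335518} to obtain the projective toroidal modification. The one point to make explicit in your preliminary triangulation step is that the simplicial refinement of \cite[Proposition 11.1.7]{MR2810322} introduces no new rays, so $X_1 \to X$ is small and contributes no exceptional divisors to the composite; with that noted, your argument matches the paper's proof.
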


\begin{proof}

As noted in
\pref{rm:finiteness_of_non_terminal_witnesses},
there are only finitely many exceptional divisors
over
$X$
whose b-discrepancies are non-positive.  Alternatively, this can be seen by combining Theorem~\ref{th:positivity_of_Brauer_discrepancy_of_non_toric_divisors} and Proposition~\ref{prop:finitely_many_toric}.

Let $\Delta = (|\Delta|, \sigma^Y, M^Y)$ be the conical polyhedral complex with integral structure associated to $X$ and let $S$
be the set of such divisors with non-positive b-discrepancies.  Now take any divisor
$
 E
$
in
$
 S
$.
By \pref{th:positivity_of_Brauer_discrepancy_of_non_toric_divisors},
$
 E
$
is obtained by repeatedly blowing up strata. Hence there is a f.r.p.p. decomposition of
$
 \Delta
$
in which there exists a one-dimensional cone
$
 \rho _{ E }
$
corresponding to
$
 E
$.

 So there is a vector $w \in M^Y$ in a cone $\sigma^Y$ in $\Delta$.
We take the star subdivision $\Delta(w)$ of $\Delta$, and repeat inductively for all $w \in S$,
 until we obtain $\Delta'$ a f.r.p.p decomposition of $\Delta$ whose set of one-dimensional cones is
$ \{ \rho _{ E } \mid E \in S \}$ together with those in $\Delta$.

There exists a corresponding projective allowable modification
$
 Y \to X 
$
by \cite[Theorem $6 ^{ * }$, p.90]{MR0335518}, which
extracts only those divisors which are contained in
$
 S
$. 
\end{proof}

\begin{corollary}[Proof of \pref{th:existence_of_Brauer_terminal_resolution} via toroidal modification]\label{secondproof}
Let $(X,\bfD)$ be a b-\logpair.  Then there is a projective birational map $Y \rightarrow X$ such that $(Y,\bfD)$ is $\Q$-factorial and b-terminal.
\end{corollary}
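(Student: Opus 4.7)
The plan is to combine the toroidal extraction result \pref{prop:exists:terminal:extraction:toroidal} with the log resolution lemma \pref{lm:log_smooth_resolution} and Theorem~\ref{th:positivity_of_Brauer_discrepancy_of_non_toric_divisors}, the latter of which ensures that no ``non-toroidal'' exceptional divisor can obstruct b-terminality.

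First I would take a projective \logres $f_1\colon X_1 \to X$ of the underlying \logpair $(X,\bfD_X)$. By \pref{lm:log_smooth_resolution} the b-\logpair $(X_1, \bfD)$ is \logsmooth, hence in particular toroidal, with toroidal embedding $X_1 \setminus \Supp \bfD_{X_1} \subset X_1$. Since $\bfD$ is fractional, the pair $(X_1, \bfD_{X_1})$ is klt.

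Next I would apply \pref{prop:exists:terminal:extraction:toroidal} to the toroidal b-\logpair $(X_1, \bfD)$ to obtain a projective birational toroidal modification $f_2\colon Y \to X_1$ such that $Y$ is $\bQ$-factorial and the $f_2$-exceptional divisors are precisely the toroidal divisors $D_w$ over $X_1$ with $b(D_w;X_1,\bfD) \leq 0$. The composition $f := f_1 \circ f_2 \colon Y \to X$ is then projective and birational, and $Y$ is $\bQ$-factorial by construction.

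It remains to verify that $(Y,\bfD)$ is b-terminal. Let $E$ be any exceptional divisor over $Y$; then $E$ is a fortiori exceptional over $X_1$. If $E$ is not toroidal over $X_1$, then Theorem~\ref{th:positivity_of_Brauer_discrepancy_of_non_toric_divisors} yields $b(E;X_1,\bfD) > 0$. If $E$ is toroidal over $X_1$, then by the defining property of $f_2$, the inequality $b(E;X_1,\bfD) \leq 0$ would force $E$ to be $f_2$-exceptional, hence divisorial on $Y$, contradicting that $E$ is exceptional over $Y$; thus $b(E;X_1,\bfD) > 0$ in this case as well. Since all $f_2$-exceptional divisors $E'$ satisfy $b(E';X_1,\bfD) \leq 0$ by construction, Lemma~\ref{lm:brauer_discrepancy_under_extraction_of_bad_divisor} applies and gives
\[
b(E;Y,\bfD) \geq b(E;X_1,\bfD) > 0,
\]
so $(Y,\bfD)$ is b-terminal. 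The main conceptual point, and the only place where real work is hidden, is the compatibility between the toroidal and arbitrary viewpoints: \pref{prop:exists:terminal:extraction:toroidal} only extracts toroidal divisors, so the argument rests on Theorem~\ref{th:positivity_of_Brauer_discrepancy_of_non_toric_divisors} to guarantee that non-toroidal exceptional divisors are automatically harmless for b-terminality, which is the genuinely constructive input of the toroidal approach.
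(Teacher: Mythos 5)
Your proposal is correct and follows essentially the same route as the paper: log resolution via Lemma~\ref{lm:log_smooth_resolution}, toroidal extraction of the non-positive-discrepancy divisors via Proposition~\ref{prop:exists:terminal:extraction:toroidal}, Theorem~\ref{th:positivity_of_Brauer_discrepancy_of_non_toric_divisors} to confirm that these exhaust all divisors with $b \leq 0$, and Lemma~\ref{lm:brauer_discrepancy_under_extraction_of_bad_divisor} to conclude. Your explicit case analysis on whether a remaining exceptional divisor is toroidal just spells out a step the paper leaves terse.
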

\begin{proof}  Let $(X,\bfD)$ be a b-\logpair.
By
\pref{lm:log_smooth_resolution},
we find a \logres
$X_1 \to X$
of the \logpair
$
 ( X, \bfD _{ X } )
$
so that the pair
$
 ( X_1, \bfD _{  X _{ 1 } } )
$
is \logsmooth.  Note that $(X_1,\bfD_{X_1})$ is toroidal.  Now by Proposition~\ref{prop:exists:terminal:extraction:toroidal} we can find a projective allowable modification $Y \to X_1$ that extracts 
exactly the toroidal divisors $D_w$ over $X_1$ with
$
 b ( D_w; X_1, \bfD ) \leq 0.
$
By Theorem~\ref{th:positivity_of_Brauer_discrepancy_of_non_toric_divisors} these are all the divisors over $X$
with $b ( D_w; X_1, \bfD ) \leq 0.$  So by Lemma~\ref{lm:brauer_discrepancy_under_extraction_of_bad_divisor}  we see that $(Y,\bfD)$ is b-terminal, and as in the first proof of \pref{th:existence_of_Brauer_terminal_resolution}, the composition
$
 Y \to X_1  \to X
$
is a desired b-terminalization.
\end{proof}
  
\section{Toric Brauer Classes}\label{toric_brauer_classes}

Let $X$ be a toric variety with open dense torus $T$ of dimension $n$.  We define a toric Brauer pair to
be a pair $(X,\alpha)$ where $\alpha \in \Br T \simeq \wedge^2(\Hom(\mu,\Q/\Z)^n)$ as noted in \cite{MR1085941}.
Following \cite{MR1085941}, we fix a primitive $p$-th root of unity so we have an isomorphism $\Z/p \simeq \mu_p$.  At this point, $p$ is an arbitrary non-zero integer, but we will often restrict to $p$ being prime and note when this occurs. Then we associate a skew symmetric matrix
to $\alpha$
$
 M_\alpha \in \lb \Z/p \rb ^{n \times n},
$
where $p$ is the order of $\alpha$.  Let $\rho =\Cone(w)$ be a ray in $\R N$ generated by the primitive vector $w \in N$, and let $\overline{w} \in (\Z/p)^n$ be the reduction of $w$ modulo $p$.  They also show in \cite[Lemma 1.7(b)]{MR1085941} that the Brauer class $\alpha$ ramifies on the divisor $D_{\la w \ra}$
if $M_\alpha \overline{w}$ in $(\Z/p)^n$ is non-zero, and that the ramification index of $\alpha$ on $D_{\la w \ra}$ is the order of $M_\alpha \overline{w}$ in $(\Z/p)^n$. 


\begin{proposition}\label{full_rank_toric}
  Let $(X,\alpha)$ be a toric Brauer pair such that $\alpha$ has odd prime order $p$.  If $M_\alpha$ has full rank then $(X,\alpha)$ is b-terminal (b-canonical, b-lt, b-lc) if and only if $X$ is terminal (canonical, lt, lc).
\end{proposition}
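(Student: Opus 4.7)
The plan is to exploit the full-rank hypothesis to conclude that every toric divisor has ramification index exactly $p$, and then to reduce both singularity equivalences to the matching of b-discrepancies and ordinary discrepancies on toric divisors.

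First I would verify that $r_w = p$ for every primitive $w \in N$. Indeed $w$ being primitive forces $\bar w \ne 0$ in $(\bZ/p)^n$; since $M_\alpha$ has full rank and $p$ is prime, $(\bZ/p)^n$ is a vector space on which $M_\alpha$ acts injectively, and so $M_\alpha \bar w \ne 0$. Every nonzero element of $(\bZ/p)^n$ has order $p$, so the ramification criterion recalled from \cite{MR1085941} before the statement gives $r_w = p$. Substituting $r_w = r_1 = \cdots = r_m = p$ into the toric b-discrepancy formula \eqref{eq:formula_for_toric_b_discrepancy} yields
\[
 b(D_w; X, \bfD_\alpha) \;=\; a_1 + \cdots + a_m - 1 \;=\; a(D_w; X),
\]
so the b-discrepancy of the Brauer pair coincides with the ordinary toric discrepancy of $X$ on every toric divisor over $X$.

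For the b-lt and b-lc equivalences, I would apply Corollary~\ref{cr:b-lt_b-lc_vs_a-lt_a-lc} to reduce to showing that $(X, \bfD_{\alpha,X})$ is lt (resp.\ lc) iff $X$ is lt (resp.\ lc). Under our hypothesis $\bfD_{\alpha,X} = (1 - 1/p)\partial X$, and a direct computation gives $a(D_w; X, \bfD_{\alpha,X}) = (a_1 + \cdots + a_m)/p - 1 > -1$ for every nonzero primitive $w$ in a simplicial cone; hence $(X, \bfD_{\alpha,X})$ is automatically klt. Since every toric variety is itself klt, both sides of the equivalence hold for free in these two cases.

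For the remaining b-terminal and b-canonical equivalences, I would pick a toric log resolution $f \colon \widetilde X \to X$, so that $(\widetilde X, \bfD_{\alpha,\widetilde X})$ is snc. For any non-toric exceptional divisor $E$ over $\widetilde X$, Theorem~\ref{th:positivity_of_Brauer_discrepancy_of_non_toric_divisors} yields $b(E; \widetilde X, \bfD_\alpha) > 0$, while $\widetilde X$ being smooth forces $a(E; \widetilde X) > 0$. Expanding $K_Z + \bfD_Z$ relative to both $X$ and $\widetilde X$ produces the transitivity identity
\[
 b'(E; X, \bfD_\alpha) \;=\; b'(E; \widetilde X, \bfD_\alpha) + \tfrac{1}{p}\sum_F a(F; X)\, m_E(h^* F),
\]
where $F$ runs over the $f$-exceptional toric divisors and we used $b'(F; X, \bfD_\alpha) = a(F; X)/p$ from the first paragraph. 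This identity, together with its analogue for ordinary discrepancies, shows that non-toric exceptional divisors over $X$ inherit positivity of $b$- and $a$-discrepancies as soon as the toric ones over $X$ have positive (resp.\ non-negative) discrepancy. Combined with the matching on toric divisors, the equivalences of b-terminal with terminal and b-canonical with canonical follow. The main obstacle here is precisely that non-toric exceptional divisors over $X$ do not in general have b-discrepancy equal to their usual discrepancy, so they must be controlled separately via Theorem~\ref{th:positivity_of_Brauer_discrepancy_of_non_toric_divisors} and the explicit form of the transitivity correction.
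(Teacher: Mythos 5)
Your proposal is correct and its core is exactly the paper's proof: the paper's argument consists precisely of your first paragraph, namely that full rank plus primality forces $M_\alpha\overline{w}\neq 0$ and hence $r_w=p$ for every primitive $w$, so that formula \eqref{eq:formula_for_toric_b_discrepancy} collapses to $b(D_w;X,\bfD_\alpha)=a_1+\cdots+a_n-1=a(D_w;X)$. Your remaining two paragraphs (the klt observation disposing of the lt/lc cases, and the control of non-toric exceptional divisors via Theorem~\ref{th:positivity_of_Brauer_discrepancy_of_non_toric_divisors} together with the transitivity identity) supply the reduction to toric divisors that the paper leaves implicit, and that extra care is sound --- the only blemish being the stray $h$ where you mean the morphism $g\colon Z\to\widetilde X$.
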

\begin{proof}
Let $w=(a_1,\ldots,a_n)$ be a primitive vector in the lattice $N$.  Since $M_\alpha$ has full rank
$M_\alpha \overline{w} =0$ if and only if $w \in pN,$ but then $w$ is not primitive.  So the order of $M_\alpha w$ is $p$ for all primitive $w$.  Then when we compute the discrepancy using the formula of \eqref{eq:formula_for_toric_b_discrepancy}, all $r_i = r_w = p$, and
\[b(D_w ; X, \bfD) = (a_1 + \cdots + a_n -1) = a(D_w , X).\]
\end{proof}
Note that in dimension two, if $(X,\alpha)$ is b-terminal then $X$ is terminal (equivalently smooth) as shown in \cite{MR2180454}.  This yields the following question.
\begin{question}
  Is there a natural condition on a b-divisor $\bfD$, or a Brauer class $\alpha$ so that $(X,\bfD)$ or $(X,\alpha)$ b-terminal implies $X$ is terminal?
\end{question}
However, when $p$ is odd, $M_\alpha$ must have even rank and so we cannot expect the above results to hold when both $n$ and $p$ are odd.
Below we present an example which shows that Proposition~\ref{full_rank_toric} cannot be generalized to hold dimension 3.
\begin{example}
We present an example of a toric Brauer pair $(X,\alpha)$ in dimension 3 such that $X$ is log terminal with the minimal discrepancy arbitrarily close to
$
 - 1
$,
whereas the b-\logpair
$
 ( X, \bfD _{ \alpha })
$
is b-terminal.

We will let $X$ be the singularity $\frac{1}{r}(1,1,0)$, so that the minimal discrepancy is $- 1 + 2/r$ and note that
$$
 - 1 + \frac{2}{r}
\to -1 \mbox{ as } r \to \infty.$$
The singularity
$
 X
$
can be globally presented as a toric variety using the standard lattice
$
 \bZ ^{ 3 }
$
and the cone generated by the columns of the following matrix
\[(v_1,v_2,v_3) = \begin{pmatrix}
1 & -1 & 0 \\
0 & r & 0 \\
0 & 0 & 1
\end{pmatrix}.\]
We fix a prime $p$ and we let the skew symmetric matrix corresponding to the toric Brauer class $\alpha$ be
\begin{equation}
 M _{ \alpha }
 =
 \begin{pmatrix}
0 & 0 & -r \\
0 & 0 & -1 \\
r & 1 & 0 
\end{pmatrix}
\in
\lb \bZ / p \rb ^{ 3 \times 3 }.
\end{equation}

In order to check that the pair
$
 \lb X, \bfD _{ \alpha } \rb
$
is b-terminal, it is enough to check
$
 b ( D _{ \la w \ra }; X, \bfD _{ \alpha } ) > 0
$
for any primitive vector
$
 w
$
in the cone. By using the formula \eqref{eq:formula_for_toric_b_discrepancy},
we can directly check this by elementary arguments.
\end{example}

When the dimension $n$ is odd and we have rank $n-1$, one can compute the discrepancy
for \alogsmooth Brauer pair as follows, where we do the case $n=3$ in detail.

Assume $p$ is a prime so the ramification indices are $p$ or 1. We also identify $\Z/p$ with $\mu_p^{-1}$ so that $\alpha$ is represented by a skew-symmetric matrix
\[M = \begin{pmatrix}
   0 & c_2 & - c_1 \\
   -c_2 & 0 & c_0 \\
  c_1 & -c_0 & 0 
  \end{pmatrix}\]
where $c_j \in \Z/p$. We assume that $M \neq 0$ so that it has rank 2 and $\ker M$ is the $(\Z /p)$ span of the vector  $(c_0,c_1,c_2)$. We further assume that at least two of the $c_j$ are non-zero so that $\alpha$ ramifies on all three planes. For $i = 0, \ldots, p-1, j = 0,1,2$ we let $c_{ij} \in \{ 0 ,\ldots, p-1\}$ be the smallest non-negative integer whose residue modulo $p$ is $ic_j$.

Toric exceptional divisors $E_{(a_0,a_1,a_2)}$ above $X$ correspond via the toric dictionary to primitive triples $(a_0,a_1,a_2) \in \mathbb{N}^3$. From ~\cite[Lemma~1.7b]{MR1085941}, $\alpha$ is unramified along $E_{(a_0,a_1,a_2)}$ if and only if $(a_0,a_1,a_2) \in \ker M$ modulo $p$. 

Given an integer $x$, define
\[r_p(x) = \min \{ x+yp \mid x+yp \geq 0, y \in \Z \}\] to be the least non-negative residue of $x$ modulo $p$. 
\begin{proposition}
  We use the above notation and let
  \[c = \min \{r_p(ic_0) + r_p(ic_1)+r_p(ic_2) \mid i \in (\Z/p)^* \}.\]
  Then $(X,\alpha)$ is always b-lt, but will be
\begin{enumerate}
\item b-terminal if $c > p,$ in which case $(c_0, c_1,c_2) \equiv (k,a,p-a) \pmod{p}$
  for some $k,a \not\equiv 0$ up to permutation.
 \item b-canonical if $c = p,$ in which case $\sum c_i \in p\Z$.
 \item not b-canonical if $c < p$.
\end{enumerate}
\end{proposition}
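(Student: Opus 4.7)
The plan is to apply the toric b-discrepancy formula \eqref{eq:formula_for_toric_b_discrepancy} to each exceptional primitive $w=(a_0,a_1,a_2)\in\Z_{\ge 0}^3$ and read off the three thresholds in terms of $c$. By~\cite[Lemma~1.7(b)]{MR1085941}, the ramification index along $D_{e_j}$ is $r_j=p$ if $c_j\not\equiv 0\pmod p$ and $r_j=1$ otherwise, while $r_w=1$ precisely when $w\in\ker M$ modulo $p$ and $r_w=p$ otherwise. When $r_w=p$, the formula yields $b(D_w;X,\bfD_\alpha)=\sum_j pa_j/r_j-1\ge\sum_j a_j-1\ge 1$ since $p/r_j\ge 1$ and exceptional $w$ satisfy $\sum a_j\ge 2$. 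When $r_w=1$, one has $w\equiv i(c_0,c_1,c_2)\pmod p$ for some $i\in(\Z/p)^\ast$ and
\[
 b(D_w;X,\bfD_\alpha)=\sum_{c_j\ne 0}\frac{a_j}{p}+\sum_{c_j=0}a_j-1\ge\frac{S_i}{p}-1\ge\frac{c}{p}-1,
\]
where $S_i:=\sum_j r_p(ic_j)$. Equality is attained by $w^\ast=(r_p(i^\ast c_j))_j$ at a minimizer $i^\ast$ of $c$; a gcd argument (if $d=\gcd(w^\ast)>1$ then $\gcd(d,p)=1$ and $w^\ast/d$ corresponds to $i^\ast/d\in(\Z/p)^\ast$, contradicting minimality of $c$) shows $w^\ast$ is primitive.

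It follows that the minimum b-discrepancy on $(X,\bfD_\alpha)$ is $\min(c/p-1,1)$, sharing its sign with $c-p$; since $c\ge 2$ (two of the $r_p(ic_j)$ are strictly positive for any $i\in(\Z/p)^\ast$), $(X,\bfD_\alpha)$ is always b-lt, giving the trichotomy b-terminal $\iff c>p$, b-canonical $\iff c=p$, not b-canonical $\iff c<p$. To translate into statements about $(c_0,c_1,c_2)$, I will use $r_p(x)+r_p(-x)=p$ for $x\not\equiv 0\pmod p$ (so $S_i+S_{-i}=kp$ with $k:=\#\{j:c_j\ne 0\}\in\{2,3\}$) and $S_i\equiv i(c_0+c_1+c_2)\pmod p$. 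If $\sum c_j\equiv 0\pmod p$, each $S_i$ is a positive multiple of $p$ in $(0,kp)$ and the pairing forces $\min S_i=p$; conversely $\min S_i=p$ forces $\sum c_j\equiv 0$, yielding the characterization of $c=p$. The easy direction of the $c>p$ characterization is also immediate: if $c_j+c_k\equiv 0\pmod p$ with $c_j,c_k\ne 0$, then $r_p(ic_j)+r_p(ic_k)=p$ for every $i\in(\Z/p)^\ast$, so $S_i=p+r_p(ic_\ell)\ge p+1$.

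The main obstacle will be the converse of the last claim. Granted $c>p$, one first notes $k=3$ (else $S_i+S_{-i}=2p$ forces $\min S_i\le p$) and $\sum c_j\not\equiv 0$; then all $S_i\in(p,2p)$, and the congruence $S_i\equiv r_p(is)\pmod p$ combined with $S_i\in p\Z+r_p(is)\cap(p,2p)=\{p+r_p(is)\}$ forces the identity $S_i=p+r_p(is)$ for every $i\in(\Z/p)^\ast$. I plan to extract the pair-sum relation by scaling so that $c_0=1$ (which only permutes the multiset $\{S_i\}$) and case-splitting on $c_1+c_2$: the case $c_1+c_2\le p-1$ immediately gives $S_1\le p$, the case $c_1+c_2=p$ is excluded by the no-pair-sum hypothesis, and the remaining case $c_1+c_2\ge p+1$ should be ruled out by evaluating the identity $S_i=p+r_p(is)$ at $i=2$, $c_1^{-1}$, $c_2^{-1}$ and using the involution $i\leftrightarrow -i$ together with a careful accounting of which $ic_j$'s lie above $p/2$. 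This last pigeonhole-style case analysis is the technical heart of the proof.
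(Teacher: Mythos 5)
Your core computation agrees with the paper's: both reduce to toric exceptional divisors (you should say a word about why non-toric divisors can be ignored, namely Theorem~\ref{th:positivity_of_Brauer_discrepancy_of_non_toric_divisors}), apply \eqref{eq:formula_for_toric_b_discrepancy}, observe that ramified exceptional divisors have $b\ge 1$, and minimize $\tfrac1p\sum a_j-1$ over $(a_0,a_1,a_2)\equiv i(c_0,c_1,c_2)\pmod p$ to get minimal b-discrepancy $(c-p)/p$, whence b-ltness and the trichotomy. One correction to your setup: by \cite[Lemma 1.7(b)]{MR1085941} the ramification index along $D_{e_j}$ is the order of the $j$-th \emph{column} of $M_\alpha$, which involves the two $c_k$ with $k\neq j$, not $c_j$ itself; under the standing hypothesis that at least two of the $c_j$ are nonzero, all three coordinate divisors have index $p$, so the correct unramified-case formula is simply $\tfrac1p(a_0+a_1+a_2)-1$. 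Your displayed formula $\sum_{c_j\neq 0}a_j/p+\sum_{c_j=0}a_j-1$ is wrong as stated, though the error happens not to move the minimum because the congruence forces $a_j\equiv 0\pmod p$ at the offending indices.

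The substantive gap is the converse in item (1): that $c>p$ forces two of the $c_j$ to sum to $0$ modulo $p$. This is exactly the classical ``terminal lemma'' for three-dimensional cyclic quotient singularities, and the paper disposes of it (and of the characterization in item (2)) by citing the known classification \cite[Example-Claim 14-2-5]{MatsukiBook}. You instead set out to reprove it: your reductions ($k=3$, $\sum c_j\not\equiv 0$, and the identity $S_i=p+r_p(is)$ for all $i\in(\Z/p)^*$) are correct, but the decisive step --- ruling out $c_1+c_2\ge p+1$ after normalizing $c_0=1$ --- is left as an unexecuted ``pigeonhole-style case analysis'' which you yourself call the technical heart. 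Extracting the pair-sum relation from $S_i=p+r_p(is)$ is the entire content of the terminal lemma and is not routine; as written this is a plan, not a proof. Either carry out that case analysis in full, or do as the paper does and invoke the known characterization of terminal toric $3$-fold singularities. (The forward direction of (1), and items (2) and (3), are fine; your primitivity argument for the minimizing vector $(r_p(i^*c_j))_j$ is a nice detail the paper leaves implicit.)
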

\begin{proof}
  It suffices to compute discrepancy for toric exceptional divisors $E$. This is $b'(X,\alpha;E) = \frac{1}{p}(a_0 + a_1 + a_2) - \frac{1}{r_E}$ where $r_E$ is the ramification index along $E = E_{(a_0,a_1,a_2)}$. Now $a_0 + a_1 + a_2 > 1$ so this is positive unless $e_E=1$. In this case, $(a_0,a_1,a_2) \equiv i(c_0,c_1,c_2)$ modulo $p$ for some $i$. Then $b'$ is minimized when $(a_0,a_1,a_2) =(c_{i0},c_{i1},c_{i2})$ for some $i$ whence we obtain $b'(X,\alpha;E) = \frac{1}{p}(c - p)$.  Note that the minimum occurs when $r_E=1$ and so $b'(X,\alpha;E) = b(X,\alpha;E)$, and  we obtain the first part of each statement.  To obtain the classifications in the first two cases, we use the well known characterization of toric terminal 3-fold singularities described in Example-Claim 14-2-5 \cite{MatsukiBook}.
\end{proof}
To compute the discrepancies for b-\logpairs that come from ramification information, it is necessary to compute the ramification indices globally before carrying out an \'etale localization.  Since the ramification indices change after \'etale localization, we note that the discrepancy of a Brauer pair cannot be based on local information in the sense of the following example.
\begin{example} \label{enonlocal}
Suppose that $p = 3$. We let $\alpha$ correspond to $(c_0,c_1,c_2) = (1,2,0)$ and $\alpha'$ correspond to $(c_0,c_1,c_2) = (1,1,0)$. Note that $(X,\alpha)$ is Brauer canonical but $(X,\alpha')$ is not. Furthermore, if $f:X' \to X$ is the blowup along the coordinate line $C: x_0 = x_1 = 0$, then the discrepancy of $(X,\alpha')$ along the exceptional divisor is negative. However, if $P\in C$ is a general point, then  $(X,\alpha)$ and $(X,\alpha')$ are isomorphic in an \'etale neighbourhood of $P$.  
\end{example}

\bibliographystyle{skalpha}
\bibliography{mainbibs}

\end{document}